\newtheorem{theorem}{Theorem}[subsection]
\newtheorem{corollary}[theorem]{Corollary}
\newtheorem{definition}[theorem]{Definition}
\newtheorem{example}[theorem]{Example}
\newtheorem{lemma}[theorem]{Lemma}
\newtheorem{proposition}[theorem]{Proposition}
\newtheorem{remark}[theorem]{Remark}
\newcommand{\OX}[1]{\ensuremath \mathcal{O}_{#1}}			 
\newcommand{\OXss}[1]{\ensuremath \mathcal{O}_{\mathscr{#1}}} 
\newcommand{\Dolbn}{\ensuremath \mathcal{D}olb}				
\newcommand{\Dolb}[2]{\ensuremath \Dolbn(#1;#2)}	
\newcommand{\Dolbm}[2]{\ensuremath \Dolbn(#1;\mathcal{#2})}		
\newcommand{\Dolbss}[2]{\ensuremath \Dolbn(\mathscr{#1\,};\mathcal{#2})}  
\newcommand{\DolbA}[2]{\ensuremath \Dolbn(\mathcal{#1}\,;\mathcal{#2})}
\title[A Dolbeault-Grothendieck Resolution for Singular Spaces]
{A Dolbeault-Grothendieck Resolution for Singular Spaces}
\author{Andrei Baran}
\address{Institute of Mathematics of the Romanian Academy, P.O.Box 1-764, RO-014700 Bucharest, Romania}
\email{Andrei.Baran@imar.ro}
\date{\today}
\subjclass[2010]{32C15, 32C35, 32C37}
\keywords{Dolbeault-Grothendieck resolution, Serre duality, singular complex spaces, semi-simplicial}
\begin{document}

\begin{abstract} 
We construct a generalization of the Dolbeault-Grothendieck resolution on a singular 
complex space. The same construction yields, for each morphism of analytic spaces, a 
pullback mapping between the respective Dolbeault-Grothendieck resolutions.
As in the smooth case, the terms of the resolution are soft sheaves with stalks 
which are flat with respect to the sheaf of holomorphic sections. If, moreover, the 
complex space $(X,\OX{X})$ is countable at infinity then the global section spaces of 
the terms of the resolution are endowed with natural Fr\'{e}chet-Schwarz topologies 
which induce the natural topology on the cohomology groups $H^{\bullet }(X,\OX{X})$.
The construction is an exercise in globalization using semi-simplicial techniques.
Using the above construction one can produce, for instance, a soft resolution with 
$\OX{X}$-flat stalks for the de Rham complex on the analytic space $X$.
\end{abstract}

\maketitle

\section{Introduction}

\refstepcounter{subsection} Let $X$ be an n-dimensional complex
manifold and let 
\begin{equation}
0\longrightarrow \OX{X} \longrightarrow \mathcal{E}_{X}^{0,0}\overset%
{\overline{\partial }}{\longrightarrow }...\overset{\overline{\partial }}{%
\longrightarrow }\mathcal{E}_{X}^{0,n}\longrightarrow 0  \label{Rezol_Dolb1}
\end{equation}%
be the Dolbeault-Grothendieck resolution on $X$. Here, as usual, $\OX{X}$ is 
the sheaf of holomorphic functions and $\mathcal{E}_{X}^{p,q}$ the
sheaf of $(p,q)$ smooth differential forms on $X$. The problem is that in
the singular case the complex (\ref{Rezol_Dolb1}) is no longer a resolution
for $\OX{X}$. The purpose of this paper is to construct an analogue
for the Dolbeault-Grothendieck resolution on a complex space with
singularities.

There exist two recent constructions of analogues of Dolbeault-Grothendieck
resolutions under suplimentary hypothesis on the singular space. Ancona and
Gaveau \cite{Anc-Gav} considered analytic spaces with smooth singular locus;
their solution is based on Hironaka desingularization. Andersson and
Samuelsson \cite{And-Samuel} considered the case of a reduced analytic
space; the resolution is obtained as a subcomplex of the complex of smooth
currents on the space; their construction uses Koppelman representation
formulas.

Our construction is based on working in a category larger than that of
analytic spaces.--- the category of semi-simplicial analytic spaces. Recall
that a s.s.analytic space (throughout this paper s.s.is short for
semi-simplicial) is a contravariant functor from a simplicial complex (seen
as a category) to the category of analytic spaces, or, equivalently, a
family of analytic spaces indexed by the simplexes of a simplicial complex,
together with a family of compatible connecting morphisms (see Section \ref%
{Sect_ss} for the definitions). S.s.analytic spaces and the corresponding
analytic modules proved a very flexible tool. They appeared implicitely or
explicitely, for instance, in Forster, Knorr \cite{ForstK} for the proof of
Grauert's direct image theorem, in Verdier \cite{Verd}, Baran \cite{B1} for
the introduction of natural topologies on the global (hyper)cohomological
invariants of analytic sheaves, in Ramis, Ruget \cite{R-R} for the proof of
relative analytic duality, in Flenner \cite{Flenner} and B\u{a}nic\u{a},
Putinar, Schumacher \cite{BPS} for computations linked to deformation theory.

Our construction solves the problem for any analytic space. In fact for each
pair $(X,\mathcal{A})$, where $\mathcal{A}$\ is an embedding atlas of the
analytic space $X$ (i.e. a family of local closed embeddings of $X$ in
complex manifolds - see paragraph \ref{paragr_Atlas}) we produce a
resolution for $\OX{X}$, denoted by $\Dolb{\mathcal{A}\,}{\OX{X}}$. The pair 
$(X,\mathcal{A})$ will be called a \textit{locally embedded analytic space}. 
In particular, if $X$ is a complex manifold and $\mathcal{A}$ the obvious atlas with 
one chart, then one gets the usual resolution on $X$. For each morphism of locally embedded 
analytic spaces: 
\begin{equation*}
f:(X,\mathcal{A})\rightarrow (Y,\mathcal{B})
\end{equation*}%
one constructs a pullback morphism which extends the pullback morphism from
the smooth case: 
\begin{equation}
f^{\ast }:\Dolb{\mathcal{B}\,}{\OX{Y}}\rightarrow 
f_{\ast }\Dolb{\mathcal{A}\,}{\OX{X}}
\label{pull_back_introd}
\end{equation}

Moreover, the same construction produces a resolution for each $\OX{X}$-module 
$\mathcal{F}$, denoted $\DolbA{A}{F}$.

The resolution $\DolbA{A}{F}$ depends on the embedding atlas $\mathcal{A}$. However 
the resolution is unique up to unique isomorphism in the derived category of 
$\OX{X}$-modules, $D(\OX{X})$. More precisely, if $(X,\mathcal{A}),(Y,\mathcal{B})$ 
are locally embedded analytic spaces and $f:X\rightarrow Y$ is a morphism of
analytic spaces (but not necessarily of locally embedded analytic spaces)
then there exists in $D(\OX{Y})$\ a unique pullback morphism similar to 
(\ref{pull_back_introd}) (see Theorem \ref{Theor_Dolb_cat_deriv}). In particular, 
if $\mathcal{A}$ and $\mathcal{B}$ are two embedding atlases on the same analytic space 
$X$ then there is a unique isomorphism between $\DolbA{A}{F}$ and $\DolbA{B}{F}$ 
in $D(\OX{X})$.

The main result of the paper is:

\begin{theorem}
\label{Theor_Dolb}

\begin{enumerate}
\item Let $(X,\mathcal{A})$ be a locally embedded analytic space and $%
\mathcal{F}\in Mod(\OX{X})$. Then there is a functor 
\begin{equation}
\Dolb{\mathcal{A}\,}{\bullet}:Mod(\OX{X})\rightarrow C^{+}(X)
\end{equation}
such that:

\begin{enumerate}
\item $\Dolb{\mathcal{A}\,}{\bullet}$ is an exact functor

\item There is a functorial morphism $\mathcal{F}\rightarrow \DolbA{A}{F}$ 
and $\DolbA{A}{F}$ is a resolution of $\mathcal{F}$.

\item $\DolbA{A}{F}$ has soft components.

\item $\Dolb{\mathcal{A}\,}{\OX{X}}$ has $\OX{X}$-flat components

\item One has a natural quasi-isomorphism: 
\begin{equation}
\Dolb{\mathcal{A}\,}{\OX{X}}\otimes _{\OX{X}}%
\mathcal{F}\rightarrow \DolbA{A}{F}	
\label{ident_Dolb_A}
\end{equation}%
Moreover, if $\mathcal{F}\in Coh(\OX{X})$\ then the above morphism
is an isomorphism.

\item If $X$ is a complex manifold and $\mathcal{A}$ consists of only one
chart, namely $(X,id,X)$, then $\Dolb{\mathcal{A}\,}{\bullet}$ coincides 
with the usual Dolbeault-Grothendieck resolution on $X$.
\end{enumerate}

\item Let $f:(X,\mathcal{A})\rightarrow (Y,\mathcal{B})$ be a morphism of
locally embedded analytic spaces,\linebreak $\mathcal{F}\in Mod(\OX{X})$, $%
\mathcal{G}\in Mod(\OX{Y})$ and $u:\mathcal{G}\rightarrow f_{\ast }%
\mathcal{F}$ a morphism of $\OX{Y}$-modules. Then there exists a
natural pullback morphism: 
\begin{equation}
f^{\ast }(u):\DolbA{B}{G}\rightarrow f_{\ast }\DolbA{A}{F}  
\label{def_f*_u}
\end{equation}%
such that the following diagram commutes:%
\begin{equation}
\begin{CD} \DolbA{B}{G} @>f^{\ast }(u)>> f_{\ast
}\DolbA{A}{F} \\ @AAb^{\prime}A @AAf_{*}bA\\
\mathcal{G} @>{u}>> f_{\ast }\mathcal{F} 
\end{CD}  \label{diagr_f*_u}
\end{equation}%
In particular there is a natural morphism:%
\begin{equation}
f^{\ast }:\Dolb{\mathcal{B}\,}{\OX{Y}}\rightarrow %
f_{\ast }\Dolb{\mathcal{A}\,}{\OX{X}}
\label{def_f*}
\end{equation}%
over the mapping $f^{\ast }:\OX{Y} \rightarrow f_{\ast }\OX{X}$.

\item Let $(X,\mathcal{A})\overset{f}{\rightarrow }(Y,\mathcal{B})\overset{g}%
{\rightarrow }(Z,\mathcal{C})$ be morphisms of locally embedded analytic
spaces and $h=g\circ f$. Let moreover $\mathcal{F}\in Mod(\OX{X})$, 
$\mathcal{G}\in Mod(\OX{Y})$, $\mathcal{H}\in Mod(\OX{Z})$
and morphisms $u:\mathcal{G}\rightarrow f_{\ast }\mathcal{F}$ 
$\OX{Y}$-linear, $v:\mathcal{H}\rightarrow g_{\ast }\mathcal{G}$, 
$w:\mathcal{H} \rightarrow h_{\ast }\mathcal{F}$ $\OX{Z}$-linear, such that 
$g_{\ast }(u)\circ v=w$ then one has the commutative diagram:
\begin{equation}
\begin{tikzcd}[column sep=-0.2cm] h_{\ast}\DolbA{A}{F} && g_{\ast}\DolbA{B}{G}
\arrow{ll}[swap]{g_{\ast}(f^{\ast}(u)} \\ & \DolbA{C}{H} 
\arrow{ul}{h^{\ast}(w)} \arrow{ur}[swap]{g^{\ast}(v)}
\end{tikzcd}  \label{diagr_3_A}
\end{equation}
\end{enumerate}
\end{theorem}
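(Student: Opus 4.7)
The plan is to globalize the classical Dolbeault-Grothendieck construction via a semi-simplicial (\v{C}ech-type) assembly over the embedding atlas. Write $\mathcal{A} = \{(U_\alpha, j_\alpha, M_\alpha)\}_{\alpha \in I}$. For each non-empty simplex $\sigma = (\alpha_0,\ldots,\alpha_p)$ of the nerve, one forms the diagonal closed embedding
$j_\sigma : U_\sigma := U_{\alpha_0}\cap\cdots\cap U_{\alpha_p} \hookrightarrow M_\sigma := M_{\alpha_0}\times\cdots\times M_{\alpha_p}$,
and uses the classical Dolbeault resolution $\mathcal{E}_{M_\sigma}^{0,\bullet}$ of $\OX{M_\sigma}$. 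For a general $\mathcal{F}\in Mod(\OX{X})$ the simplex-wise term is built by tensoring $\mathcal{E}_{M_\sigma}^{0,\bullet}$ over $\OX{M_\sigma}$ with the natural $\OX{M_\sigma}$-module $(j_\sigma)_*(\mathcal{F}|_{U_\sigma})$, then restricting back to $U_\sigma$ and extending by zero to $X$. This gives a bicomplex indexed by $(p,q)$ with the $\overline{\partial}$-differential in the $q$-direction and the alternating-sum simplicial face operator in the $p$-direction; its associated simple complex is $\DolbA{A}{F}$.

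Properties (a), (c), (d), (e), (f) follow by essentially formal arguments. Exactness in $\mathcal{F}$ (property (a)) is inherited from the flatness of the smooth Dolbeault sheaves on each $M_\sigma$ together with the exactness of direct sums. Softness (c) and $\OX{X}$-flatness (d) are local over $U_\sigma$, where both reduce to the corresponding classical facts for $\mathcal{E}_{M_\sigma}^{0,q}$. The quasi-isomorphism (e) is a standard consequence of flatness, and becomes an isomorphism for coherent $\mathcal{F}$ because the simplex-wise tensor products commute with $\otimes_{\OX{X}}\mathcal{F}$ on $Coh(\OX{X})$. Statement (f) is tautological: for the one-chart atlas $(X,\mathrm{id},X)$ the semi-simplicial machinery collapses to the usual Dolbeault complex.

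The crucial and most delicate point is (b), the assertion that $\DolbA{A}{F}$ really is a resolution of $\mathcal{F}$. This is a statement about stalks at a point $x\in X$, and I would analyse it through the two spectral sequences of the bicomplex. Filtering by rows, one uses the classical Dolbeault-Grothendieck lemma on each $M_\sigma$ restricted to a small Stein neighbourhood of $j_\sigma(x)$: the $\overline{\partial}$-cohomology is concentrated in degree $0$ and equals the appropriate extension of $\mathcal{F}_x$. Filtering by columns, one uses the local acyclicity of the semi-simplicial complex associated with the cover: the set of simplexes $\sigma$ with $x\in U_\sigma$ is a standard simplex, hence combinatorially contractible, so its semi-simplicial cohomology computes $\mathcal{F}_x$ in degree $0$ and vanishes in higher degrees. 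Comparing the two spectral sequences one checks that both degenerate at $E_2$ and that the augmentation $\mathcal{F}_x\to \DolbA{A}{F}_x$ is a quasi-isomorphism. The main obstacle I foresee is keeping the extensions $(j_\sigma)_*$ and the face/degeneracy maps strictly compatible, so that both spectral sequences are genuinely applicable and identify the same edge homomorphism with the augmentation; once this bookkeeping is under control the rest of the argument is formal.

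For parts (2) and (3), the pullback $f^*(u)$ is defined simplex-by-simplex. A morphism $f:(X,\mathcal{A})\to(Y,\mathcal{B})$ of locally embedded analytic spaces provides, for every simplex $\tau$ of $\mathcal{B}$, a matching simplex $\sigma$ of $\mathcal{A}$ and a holomorphic extension $\widetilde{f}_{\sigma\tau}:M_\sigma\to M_\tau$ of $f|_{U_\sigma\cap f^{-1}(V_\tau)}$; on each such pair the classical smooth pullback of $(0,q)$-forms is well known to commute with $\overline{\partial}$. Coupling this smooth pullback with the module morphism $u$ yields the simplex-wise component of $f^*(u)$, and these components assemble semi-simplicially into the required morphism of complexes. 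Commutativity of (\ref{diagr_f*_u}) and of the triangle (\ref{diagr_3_A}) then reduce, after the assembly, to chart-level identities which follow from the functoriality of smooth pullback on complex manifolds combined with the hypothesis $g_*(u)\circ v=w$.
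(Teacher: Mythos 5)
Your construction is the paper's construction: the diagonal closed embeddings $U_\sigma\hookrightarrow M_{\alpha_0}\times\cdots\times M_{\alpha_p}$, the tensor product of the smooth Dolbeault sheaves with the pushed-forward module, and the \v{C}ech-type assembly into a double complex are exactly what the paper encodes through its semi-simplicial embedding triple $(\mathfrak{U},k,\mathfrak{D})$ and the functor $b_{\sharp}$; likewise your simplex-wise definition of $f^{\ast}(u)$ and the reduction of parts (2)--(3) to chart-level functoriality of the smooth pullback is how the paper proceeds, via its criterion (Lemma \ref{Lemma_crit_morf_im_dir}) for assembling component morphisms into a morphism into an $F_{\sharp}$-image. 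One small but consequential slip: the simplex-wise terms must be pushed forward along the open inclusions $U_\sigma\hookrightarrow X$ (as in the \v{C}ech complex of sheaves), not ``extended by zero''; extension by zero reverses the direction of the face maps and yields a left resolution, not an object of $C^{+}(X)$, and it is the softness of the terms that keeps these pushforwards (and their products) exact, which is what (a) and (c) actually rest on.

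The genuine gap is in your argument for (b), which you rightly identify as the crux. Once you push forward along $b_\sigma:U_\sigma\hookrightarrow X$, the $\overline{\partial}$-cohomology of a column is $R^{q}b_{\sigma\ast}(\mathcal{F}|U_\sigma)$, which is \emph{not} concentrated in degree $0$, since an open inclusion has higher direct images at boundary points; and the stalk at $x$ of the resulting \v{C}ech-type complex is not controlled by the simplexes with $x\in U_\sigma$: for $x\in\overline{U_\sigma}\setminus U_\sigma$ the stalk $(b_{\sigma\ast}\mathcal{G})_x$ is generally nonzero, and the infinite products over $|\sigma|=p$ do not commute with stalks. So ``the simplexes containing $x$ form a standard simplex, hence the column cohomology is $\mathcal{F}_x$'' does not compute the relevant $E_1$ page. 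The paper's repair is to localize over a member $U_j$ of the covering rather than at a point: over $U_j$ the $E_1$ page in the $q$-direction is the \v{C}ech-type complex of the sheaves $R^{q}b_{j\alpha\ast}(\mathcal{F}|U_\alpha\cap U_j)$ relative to the induced covering $\mathcal{U}\cap U_j$, and since $U_j$ itself is one of the sets of that covering, the standard extra-degeneracy contracting homotopy (Serre, FAC, ch.\ 1, \S 3--4) annihilates everything except $R^{q}b_{jj\ast}=R^{q}\mathrm{id}_{\ast}$, which vanishes for $q>0$. This sidesteps any computation of the individual $R^{q}b_{j\alpha\ast}$ and any stalk-versus-product issue. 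With that replacement, and with the augmentation factored through the ordinary \v{C}ech complex $\mathcal{F}\rightarrow b_{\sharp}(\mathcal{F}|\mathfrak{U})$ (a quasi-isomorphism by the same FAC lemma), your argument becomes the paper's.
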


\medskip

The proof is based on two simple remarks:

\begin{enumerate}
\item \label{rem_functorialit_pullback} Let $\mathscr{X}=((X_{\alpha
})_{\alpha \in \mathcal{S}})$ be a s.s.complex manifold relative to the
simplicial complex $(I,\mathcal{S})$. The compatibility of the pullback of
differential forms with the composition of mappings ensures that the
Dolbeault-Grothendieck resolutions on the manifolds $X_{\alpha }$ form a
complex of $\mathscr{X}$-modules. We denote it $\Dolb{\mathscr{X}\,}{\OXss{X}}$. 
Moreover, if 
\begin{equation}
F:\mathscr{X}\rightarrow \mathscr{Y}
\end{equation}%
is a morphism of s.s.complex manifolds (see Definition \ref{Def_morph_over_f})
then one defines a pullback morphism:%
\begin{equation}
F^{\sharp}:\Dolb{\mathscr{Y}\,}{\OXss{Y}} \rightarrow
F_{\sharp} \Dolb{\mathscr{X}\,}{\OXss{X}}
\label{pullback_manif}
\end{equation}%
where $F_{\sharp}$ is a variant of the direct image functor which associates
to each $\mathscr{X}$-module a complex of $\mathscr{Y}$-modules.

\item \label{rem_emb_an_sp} Let $X\overset{i}{\hookrightarrow }D$ be an
analytic subspace of the complex manifold $D$, given by the coherent ideal $%
\mathcal{I}\subset \OX{D}$ (we say that $(X,i,D)$ is an embedding
triple - see paragraph \ref{paragr_emb_triples}). The complex obtained by
tensoring the Dolbeault-Grothendieck resolution on $D$ with $\OX{D}/%
\mathcal{I}$ (which comes to restricting the coefficients of the
differential forms on $D$ to $X$) is a resolution of $\OX{X}$,
since the $\OX{D}$-modules $\mathcal{E}_{D}^{p,q}$ are $\OX{D}$-flat 
(see Malgrange \cite{M}). We consider this complex as the analogue for the 
Dolbeault-Grothendieck resolution on $X$. Note that the complex described here 
appears in the proof of the duality theorems of Serre-Malgrange (see Malgrange 
\cite{M1}\ or B\u{a}nic\u{a}, St\u{a}n\u{a}\c{s}ila \cite{B-S} Ch 7 \S 4.b). 
If $(\mathscr{X},k,\mathscr{D})$ is a s.s.embedding triple (see Remark 
\ref{Rem_ss_embed_triple}) then the Dolbeault-Grothendieck resolutions on each component 
form a complex of $\mathscr{X}$-modules that we denote by $\Dolb{k\,}{\OX{X}|\mathfrak{U}}$.
\end{enumerate}

Let $(X,\mathcal{A})$ be a locally embedded analytic space. By Lemma \ref%
{Lemma_assoc_ss_triples} and Example \ref{Ex_incluz_acop} one associates to $%
(X,\mathcal{A})$ a s.s.embedding triple $(\mathfrak{U},k,\mathfrak{D})$ and
a natural morphism of s.s.analytic spaces $b:\mathfrak{U}\rightarrow X$.
According to \textbf{2.} there is a $\Dolbn$-resolution on $\mathfrak{U}$. 
We need to define a $\Dolbn$-resolution on $X$, 
$\Dolb{\mathcal{A}\,}{\OX{X}}$, such that a pullback mapping
similar to (\ref{pullback_manif}) exist for $b$, i.e. a mapping :%
\begin{equation*}
b^{\sharp}:\Dolb{\mathcal{A}\,}{\OX{X}}\rightarrow b_{\sharp }%
\Dolb{k\,}{\OX{X}|\mathfrak{U}}
\end{equation*}%
For this we simply set:%
\begin{equation*}
\Dolb{\mathcal{A}\,}{\OX{X}}=b_{\sharp }\Dolb{k\,}
{\OX{X}|\mathfrak{U}}\text{ and }b^{\sharp}=id
\end{equation*}%
and check that all the properties are verified.

Here are some applications of Theorem \ref{Theor_Dolb}.

Since the terms of the Dolbeault-Grothendieck resolution are soft sheaves
one can use it to define representatives for derived functors and morphisms.
In particular the complex $\Gamma (X,\DolbA{A}{F})$ computes the cohomology of 
$X$ with coefficients in $\mathcal{F}$; furthermore, if $\mathcal{F}$ is a coherent 
sheaf then the terms of $\Gamma(X,\DolbA{A}{F})$ are endowed with Fr\'{e}%
chet-Schwarz topologies which induce the natural topologies on the
cohomology groups of $\mathcal{F}$(see Corollary \ref{Corol_topol_FS}). Note
that since for each open covering of the analytic space one produces a
resolution and the construction has good functorial properties, it follows
that the resolution is suitable to produce good representatives for derived
functors and morphisms.

If $X$ is a reduced analytic space then, by using a direct limit argument,
one can construct on $X$ an analogue of the Dolbeault-Grothendieck
resolution which coincides with the classical one on $Reg(X)$, the regular
locus of $X$. However, in this case the topologies on the global sections of
the resolution are more complicated.

One can link the complex $\Dolb{\mathcal{A}\,}{\OX{X}}$ to the complex of smooth 
differential forms on $X$, namely there is a natural surjective morphism between 
$\Dolb{\mathcal{A}\,}{\OX{X}}$ and a suitable \v{C}ech complex of the complex of 
smooth differential forms on $X$(see theorem \ref{Theor_Dolb_diff_forms}).

As in the smooth case, by using the $\Dolb{\mathcal{A}\,}{\bullet }$
-functor one can construct a resolution with soft sheaves for the de Rham
complex on an analytic space $X$(see Theorem \ref{Theor_Dolb_deRham}).

The main result of this note was announced in \cite{B2}. In the same paper
the functor $F_{\sharp}$ (denoted there by $F_{\ast }$) is defined.

In a future paper, using roughly the same technique as here, but replacing
the functor $F_{\sharp }$ with the direct image with proper supports we
shall give a construction of the dualizing complex of an analytic space.

\section{Preliminaries \label{Sect_prelim}}

\textbf{Review and notations.} Throughout this paper analytic space will
mean complex analytic space.

Let $(X,\OX{X})$ be an analytic space. We use the following notations:

\begin{itemize}
\item[-] $Mod(\OX{X})$ - the abelian category of $\OX{X}$-modules; $Coh(\OX{X})$ - the subcategory of coherent $\OX{X}$-modules

\item[-] $C(X)$ the abelian category of complexes of $\OX{X}$

\item[-] As usual, $C^{\ast }(X)$, respectively $D^{\ast }(X)$, where $\ast
=+,-,b$, denote the subcategories of complexes bounded below, bounded above,
respectively bounded
\end{itemize}

Let $X$ be an $n$-dimensional complex manifold. We denote by $\mathcal{E}%
_{X}^{p,q}$ the sheaf of $(p.q)$-differential forms with $C^{\infty }$
coefficients on $X$. It is a soft sheaf and, according to \cite{M}, it is an 
$\OX{X}$-flat module. The complex of $\OX{X}$-modules: 
\begin{equation}
0\longrightarrow \mathcal{E}_{X}^{0,0}\overset{\overline{\partial }}{%
\longrightarrow }...\overset{\overline{\partial }}{\longrightarrow }\mathcal{%
E}_{X}^{0,n}\longrightarrow 0  \label{Rezol_Dolb}
\end{equation}%
is the Dolbeault-Grothendieck resolution of $\OX{X}$.

For $f:X\rightarrow Y$ a holomorphic mapping between two complex manifolds
we denote \ $f^{\ast }:\mathcal{E}_{Y}^{p,q}\rightarrow f_{\ast }\mathcal{E}%
_{X}^{p,q}$ the $\OX{Y}$-linear morphism given by the pullback of forms.

It is well known that if $X\overset{f}{\rightarrow }Y\overset{g}{\rightarrow 
}Z$ are holomorphic mappings between complex manifolds and $h=g\circ f$, then
one has the commutative diagram:\smallskip 
\begin{equation}
\begin{tikzcd}[column sep=0.1cm] 
h_{\ast}\mathcal{E}_{X}^{p.q} 
&& g_{\ast}\mathcal{E}_{Y}^{p.q}
\arrow{ll}[swap]{g_{\ast}(f^{\ast})} \\ 
& \mathcal{E}_{Z}^{p.q} 
\arrow{ul}{h^{\ast}} \arrow{ur}[swap]{g^{\ast}}
\end{tikzcd}
\label{Diagr_3_E}
\end{equation}

\section{Semi-simplicial Objects \label{Sect_ss}}

\refstepcounter{subsection}\textbf{\arabic{section}.%
\arabic{subsection}} \label{paragr_ss_an_sp}\textbf{Semi-simplicial analytic
spaces. }Let $(I,\mathcal{S})$ be a simplicial complex, i.e. $I$ is a set
and $\mathcal{S}$ is a family of non-empty finite parts of $I$, called
simplexes, such that:

\begin{enumerate}
\item $\{i\}\in \mathcal{S}$ for all $i\in I$

\item if $\alpha ^{\prime }\subset \alpha \in \mathcal{S}$ then $\alpha
^{\prime }\in \mathcal{S}$
\end{enumerate}

If $\alpha \in \mathcal{S}$ \ we denote by $|\alpha |\,=Card(\alpha )-1$ the
length of the simplex $\alpha $. Recall that $\dim ((I,\mathcal{S}))=\sup
\{|\alpha |\ |\ \alpha \in \mathcal{S}\}$.

A morphism of simplicial complexes $f:(I,\mathcal{S})\rightarrow $ $(J,%
\mathcal{T})$ is simply a mapping $f:I\rightarrow J$ such that $f(\alpha
)\in \mathcal{T}$ whenever $\alpha \in \mathcal{S}$. If $K(pt)$ is the
simplicial complex over the set with one element $\{pt\},$ then we denote by
$a_{\mathcal{S}}:(I,\mathcal{S})\rightarrow K(pt)$ the morphism
induced by the unique mapping $I\rightarrow \{pt\}$.

\begin{definition}
\begin{enumerate}
\item Let $\mathcal{C}$ be a category. A semi-simplicial (s.s.) system of
objects in $\mathcal{C}$ indexed by the simplicial complex $(I,\mathcal{S})$
consists of:

\begin{itemize}
\item[-] a family $(X_{\alpha })_{\alpha \in \mathcal{S}}$ of objects in $%
\mathcal{C}$

\item[-] a family $(\rho _{\alpha \beta })_{\alpha \subset \beta }$ of
connecting morphisms, $\rho _{\alpha \beta }:X_{\beta }\rightarrow X_{\alpha
}$, such that \linebreak 
$\rho _{\alpha \alpha }=id$ for $\alpha \in \mathcal{S}$, and $%
\rho _{\alpha \beta }\circ \rho _{\beta \gamma }=\rho _{\alpha \gamma }$
whenever $\alpha \subseteq \beta \subseteq \gamma $.
\end{itemize}

\item Let $\mathscr{X}=((X_{\alpha })_{\alpha \in \mathcal{S}}$, $(\rho
_{\alpha \beta })_{\alpha \subset \beta })$, $\mathscr{Y}=((Y_{\alpha
})_{\alpha \in \mathcal{S}}$, $(\rho _{\alpha \beta }^{\prime })_{\alpha
\subset \beta })$ be s.s.systems of objects in $\mathcal{C}$ indexed by $(I,%
\mathcal{S})$. A morphism $F:\mathscr{X}\rightarrow \mathscr{Y}$ consists of
a family of morphisms in $\mathcal{C}$, $(F_{\alpha })_{\alpha \in \mathcal{S%
}}$, $F_{\alpha }:X_{\alpha }\rightarrow Y_{\alpha }$, such
that $F_{\alpha }\circ \rho _{\alpha \beta }=\rho _{\alpha \beta
}^{\prime }\circ F_{\beta }$.
\end{enumerate}
\end{definition}

If the simplicial complex is clear from the context we shall omit mentioning
it.

If $\mathcal{C}$ is the category of analytic spaces then we say for short
s.s.analytic space instead of s.s.system of analytic spaces. Let $\mathscr{X}%
=((X_{\alpha })_{\alpha \in \mathcal{S}}$, $(\rho _{\alpha \beta })_{\alpha
\subset \beta })$ be a s.s.analytic space. Here $X_{\alpha }$ is short for $%
(X_{\alpha },\OX{\alpha })$, where $\OX{\alpha }$ denotes the sheaf of holomorphic 
sections of $X_{\alpha }$, and $\rho _{\alpha \beta}$ is short for 
$(\rho _{\alpha \beta },\rho _{\beta \alpha }^{1})$ where 
$\rho _{\alpha \beta }:X_{\beta }\rightarrow X_{\alpha }$ is the topological
part and $\rho _{\beta \alpha }^{1}:\OX{\alpha }\rightarrow \rho
_{\alpha \beta \ast }(\OX{\beta })$ is the sheaf level part. If $%
X_{\alpha }$ is a complex manifold for all $\alpha \in \mathcal{S}$ then $%
\mathscr{X}$ will be called a s.s.complex manifold.

\begin{remark}
\label{anal_sp_Kpt}An analytic space can be regarded as a s.s.analytic space
indexed by $K(pt)$, the simplicial complex constructed over the index set
with one element.
\end{remark}

\begin{example}
\label{ex_acop}Let $X$ be an analytic space and $\mathcal{U}=(U_{i})_{i\in
I} $ an open covering of $X$. One associates to $\mathcal{U}$

\begin{enumerate}
\item[-] the simplicial complex $(I,\mathcal{N(U)})$, where $\mathcal{N(U)}$
denotes the nerve of $\mathcal{U}$

\item[-] the s.s.analytic space indexed by $(I,\mathcal{N(U)})$, 
\begin{equation*}
\mathfrak{U}=((U_{\alpha })_{\alpha \in \mathcal{N(U)}},(i_{\alpha \beta
})_{\alpha \subset \beta })
\end{equation*}%
where $U_{\alpha }$ denotes, as usual, the intersection $\bigcap\limits_{i%
\in \alpha }U_{i}$, and $i_{\alpha \beta }:U_{\beta }\rightarrow U_{\alpha }$
is the natural inclusion.
\end{enumerate}
\end{example}

\begin{example}
\label{ex_prod}Let $(I,\mathcal{S})$ be a simplicial complex and $%
(X_{i})_{i\in I}$ a family of analytic spaces. For $\alpha \in \mathcal{S}$
let $X_{\alpha }=\prod\limits_{i\in \alpha }X_{i}$. Then $\mathscr{X}%
=((X_{\alpha })_{\alpha \in \mathcal{S}}$, $(p_{\alpha \beta })_{\alpha
\subset \beta })$ is a s.s.analytic space, where $p_{\alpha \beta }:X_{\beta
}\rightarrow X_{\alpha }$ is the natural projection.
\end{example}

\begin{definition}
\label{Def_morph_over_f}Let $\mathcal{C}$ be a category, $f:(I,\mathcal{S}%
)\rightarrow $ $(J,\mathcal{T})$ a morphism of simplicial complexes, $%
\mathscr{X}=((X_{\alpha })_{\alpha \in \mathcal{S}}$, $(\rho _{\alpha \beta
})_{\alpha \subset \beta })$, $\mathscr{Y}=((Y_{\gamma })_{\gamma \in 
\mathcal{T}}$, $(\rho _{\gamma \delta }^{\prime })_{\gamma \subset \delta })$
s.s.systems of objects in $\mathcal{C}$ indexed by $(I,\mathcal{S})$,
respectively $(J,\mathcal{T})$. A morphism $F:\mathscr{X}\rightarrow %
\mathscr{Y}$\ of s.s.systems of objects in $\mathcal{C}$ over $f$ consists
of a family of morphisms in $\mathcal{C}$, $(F_{\alpha })_{\alpha \in 
\mathcal{S}}$, $F_{\alpha }:X_{\alpha }\rightarrow Y_{f(\alpha )}$%
, such that\ $F_{\alpha }\circ \rho _{\alpha \beta }=\rho _{f(\alpha
)f(\beta )}^{\prime }\circ F_{\beta }$.
\end{definition}

\begin{example}
\label{Ex_incluz_acop}Let $X$ be an analytic space, $\mathcal{U}%
=(U_{i})_{i\in I}$ an open covering of $X,$ and $\mathfrak{U}=((U_{\alpha
})_{\alpha \in \mathcal{N(U)}},(i_{\alpha \beta })_{\alpha \subset \beta })$
the s.s.analytic space associated to $\mathcal{U}$ (see Example \ref{ex_acop}%
). Then the inclusion mappings $i_{\alpha }:U_{\alpha }\rightarrow X$
determine a morphism of s.s.analytic spaces \linebreak 
$i:$ $\mathfrak{U\rightarrow }X$ over 
$a_{\mathcal{N(U)}}:\mathcal{N(U)}\rightarrow K(pt)$.
\end{example}

\refstepcounter{subsection} \textbf{\arabic{section}.\arabic{subsection}} 
\textbf{Modules over s.s.analytic spaces. }Unless otherwise stated, in this
section $\mathscr{X}=((X_{\alpha },\OX{\alpha })_{\alpha \in 
\mathcal{S}},(\rho _{\alpha \beta },\rho _{\beta \alpha }^{1})_{\alpha
\subset \beta })$ will denote a s.s.analytic space indexed by the simplicial
complex $(I,\mathcal{S})$.

\begin{definition}
\begin{enumerate}
\item An $\mathscr{X}$-module consists of

\begin{itemize}
\item[-] a family $(\mathcal{F}_{\alpha })_{\alpha \in \mathcal{S}}$ \ where 
$\mathcal{F}_{\alpha }$ is an $\OX{\alpha }$-module for each $\alpha \in 
\mathcal{S}$

\item[-] a family of connecting morphisms $(\varphi _{\beta \alpha
})_{\alpha \subset \beta }$, where%
\begin{equation*}
\varphi _{\beta \alpha }:\mathcal{F}_{\alpha }\rightarrow \rho _{\alpha
\beta \ast }(\mathcal{F}_{\beta })
\end{equation*}%
is a morphism of $\OX{\alpha }$-modules such that $\varphi _{\alpha
\alpha }=id$ for all $\alpha \in \mathcal{S}$, and \linebreak 
$\rho _{\beta \gamma \ast}(\varphi _{\gamma \beta })\circ 
\varphi _{\beta \alpha }=\varphi _{\gamma \alpha }$ whenever 
$\alpha \subseteq \beta \subseteq \gamma $.
\end{itemize}

\item If $\mathcal{F}=((\mathcal{F}_{\alpha })_{\alpha \in \mathcal{S}}\
,(\varphi _{\beta \alpha })_{\alpha \subset \beta })$, $\mathcal{G}=((%
\mathcal{G}_{\alpha })_{\alpha \in \mathcal{S}}\ ,(\psi _{\beta \alpha
})_{\alpha \subset \beta })$ are $\mathscr{X}$-modules, then a morphism of\
\ $\mathscr{X}$-modules \ $u:$\ $\mathcal{F}\rightarrow \mathcal{G}$
consists of a family $(u_{\alpha })_{\alpha \in \mathcal{S}}$, where $%
u_{\alpha }:\mathcal{F}_{\alpha }\rightarrow \mathcal{G}_{\alpha }$ is a
morphism of $\OX{\alpha }$-modules, such that for $\alpha \subseteq
\beta $ $\rho _{\alpha \beta \ast }(u_{\beta })\circ \varphi _{\beta \alpha
}=\psi _{\beta \alpha }\circ u_{\alpha }$.
\end{enumerate}
\end{definition}

We denote by $Mod(\mathscr{X})$ the abelian category of $\mathscr{X}$%
-modules and by $C(\mathscr{X})$ the category of complexes with terms in $%
Mod(\mathscr{X})$.

\begin{example}
$((\OX{\alpha })_{\alpha \in \mathcal{S}}\ ,(\rho _{\beta \alpha
}^{1})_{\alpha \subset \beta })$ is obviously an $\mathscr{X}$-module that
we denote by $\OXss{X}$.
\end{example}

\begin{example}
\label{Ex_F|U}In the context of Example \ref{ex_acop} let $\mathcal{F}\in
Mod(\OX{X})$. Then $(\mathcal{F}|U_{\alpha })_{\alpha \in \mathcal{S%
}}$ with the obvious connecting morphisms is an $\mathfrak{U}$-module that
we denote by $\mathcal{F}|\mathfrak{U}$.
\end{example}

The tensor product induces a bifunctor: 
\begin{equation*}
\otimes :Mod(\mathscr{X})\times Mod(\mathscr{X})\rightarrow Mod(\mathscr{X})
\end{equation*}%
namely if $\mathcal{F}=((\mathcal{F}_{\alpha })_{\alpha \in \mathcal{S}}\
,(\varphi _{\beta \alpha })_{\alpha \subset \beta })$, $\mathcal{G}=((%
\mathcal{G}_{\alpha })_{\alpha \in \mathcal{S}}\ ,(\psi _{\beta \alpha
})_{\alpha \subset \beta })$ $\in Mod(\mathscr{X})$ then \linebreak 
$((\mathcal{F}_{\alpha }\otimes \mathcal{G}_{\alpha })_{\alpha \in \mathcal{S}},
(\varphi _{\beta \alpha }\otimes \psi _{\beta \alpha })_{\alpha \subset \beta })$ 
is an $\mathscr{X}$-module.
\medskip

\refstepcounter{subsection} \textbf{\arabic{section}.\arabic{subsection}} 
\textbf{Alternate $\mathscr{X}$-modules. }In order to define the $F_{\sharp
} $ functor (see paragraph \ref{paragr_im_dir}) we need to construct an
alternate version for the notion of $\mathscr{X}$-module. For this, let $(I,%
\mathcal{S})$ be a simplicial complex and fix a total order on $I$. We use
the following notations:

\begin{itemize}
\item[-] if $\alpha \in \mathcal{S}$ and $j\in \left[ 0,|\alpha |\right] $,
then%
\begin{eqnarray*}
v(\alpha ;j) &=&\text{the j-th vertex of }\alpha \text{ with respect to the
order on }I\text{,\ } \\
&&\text{the counting starting from 0} \\
\sigma (\alpha ;j) &=&\alpha \setminus \{v(\alpha ;j)\}
\end{eqnarray*}

\item[-] if $\alpha \in \mathcal{S}$, $|\alpha |\geq 1$ and $j,k\in \left[
0,|\alpha |\right] $, $j\neq k$, then%
\begin{equation*}
\sigma (\alpha ;\/j,k)=\alpha \setminus \{v(\alpha ;j),v(\alpha ;k)\}
\end{equation*}
\end{itemize}

Thus, if for instance $j<k$ then $\sigma (\alpha ;j,k)=\sigma (\sigma
(\alpha ;k);j)=\sigma (\sigma (\alpha ;j);k-1).$

If $\alpha =\{i_{0},...,i_{n}\}$ and $i_{0}<i_{1}<...<i_{n}$, one checks
immediately that 
\begin{eqnarray*}
|\alpha | &=&n \\
v(\alpha ;j) &=&v_{j} \\
\sigma (\alpha ;j) &=&\{i_{0},...,\hat{\imath}_{j},...,i_{n}\} \\
\sigma (\alpha ;j,k) &=&\{i_{0},...,\hat{\imath}_{j},...,\hat{\imath}%
_{k},...,i_{n}\}
\end{eqnarray*}

Let $\mathscr{X}=((X_{\alpha })_{\alpha \in \mathcal{S}},(\rho _{\alpha
\beta })_{\alpha \subset \beta })$ be a s.s.analytic space. We use the
subscript $(\alpha ;j)$ to refer to the mappings along the edge $[\alpha
,\sigma (\alpha ;j)]$ of the simplicial complex $(I,\mathcal{S})$. Thus we
shall write $\rho _{(\alpha ;j)}$ instead of $\rho _{\sigma (\alpha
;j)\alpha }:X_{\alpha }\rightarrow X_{\sigma (\alpha ;j)}$. Similarly, if $%
\mathcal{F}=((\mathcal{F}_{\alpha })_{\alpha \in \mathcal{S}}\ ,(\varphi
_{\beta \alpha })_{\alpha \subset \beta })$ is an $\mathscr{X}$-module we
write $\varphi _{(\alpha ;j)}$ instead of $\varphi _{\alpha \sigma (\alpha
;j)}:\mathcal{F}_{\sigma (\alpha ;j)}\rightarrow \rho _{(\alpha ;j)\ast }(%
\mathcal{F}_{\alpha })$.

\begin{remark}
The family of commuting morphisms $(\rho _{\alpha \beta })_{\alpha \subset
\beta }$ can be "reconstructed" (by finite compositions) from the subfamily $%
(\rho _{(\alpha ;j)})_{(\alpha ;j)}$. More precisely, If $\alpha \in 
\mathcal{S}$, $|\alpha |\geq 1$, $j,k\in \left[ 0,|\alpha |\right] $, and,
for instance, $j<k$, then the following rectangular diagram commutes:%
\begin{equation}
\begin{CD} X_{\alpha} @>\rho_{(\alpha;j)}>> X_{\sigma(\alpha;j)}\\
@VV\rho_{(\alpha;k)}V @VV\rho_{(\sigma(\alpha;j),k-1)}V\\
X_{\sigma(\alpha;k)} @>\rho_{(\sigma(\alpha;k),j)}>> X_{\sigma(\alpha;j,k)}
\end{CD}  \tag{$D(\alpha ;j,k)$}  \label{diagr_DX}
\end{equation}

Conversely, any family of morphisms $(\rho _{(\alpha ;j)})_{(\alpha ;j)}$
such that the diagrams $D(\alpha ;j,k)$ commute, generates a family of
connecting morphisms for the family of analytic spaces $(X_{\alpha
})_{\alpha \in \mathcal{S}}$.

Similarly, the connecting morphisms of the $\mathscr{X}$-module $\mathcal{F}$%
\ are uniquely determined by the subfamily $(\varphi _{(\alpha
;j)})_{(\alpha ;j)}$ and the obvious rectangular diagrams commute:%
\begin{equation}
\begin{CD} \rho_{(\alpha;j,k)\ast}\mathcal{F}_{\alpha}
@<\rho_{(\sigma(\alpha;j),k-1)\ast}(\varphi_{(\alpha;j)})<<
\rho_{(\sigma(\alpha;j),k-1)\ast}(\mathcal{F}_{\sigma(\alpha;j)})\\
@AA\rho_{(\sigma(\alpha;k),j)\ast}({\varphi_{(\alpha;k)}})A
@AA{\varphi_{(\sigma(\alpha;j),k-1)}}A\\
\rho_{(\sigma(\alpha;k),j)\ast}(\mathcal{F}_{\sigma(\alpha;k)})
@<\varphi_{(\sigma(\alpha;k),j)}<< \mathcal{F}_{\sigma(\alpha;j,k)} \end{CD}
\tag{$D(\mathcal{F};\alpha ;j,k)$}
\end{equation}
\end{remark}

\begin{definition}

\begin{enumerate}
\item An alternate $\mathscr{X}$-module consists of a family $(\mathcal{F}%
_{\alpha })_{\alpha \in \mathcal{S}}$, where each $\mathcal{F}_{\alpha }$ is
an $\OX{\alpha }$-module, together with the family of connecting
morphisms $(\varphi _{(\alpha ;j)})_{(\alpha ;j)}$,\ $\varphi _{(\alpha ;j)}:%
\mathcal{F}_{(\alpha ;j)}\rightarrow \rho _{(\alpha ;j)\ast }(\mathcal{F}%
_{\alpha })$ such that the diagrams $D(\mathcal{F};\alpha ;j,k)$
anti-commute.

\item \label{pct_morf_alt_mod}Let $\mathcal{F}=((\mathcal{F}_{\alpha
})_{\alpha \in \mathcal{S}}\ ,(\varphi _{(\alpha ;j)})_{(\alpha ;j)})$, $%
\mathcal{G}=((\mathcal{G}_{\alpha })_{\alpha \in \mathcal{S}}\ ,(\psi
_{(\alpha ;j)})_{(\alpha ;j)})$ be alternate $\mathscr{X}$-modules. A
morphism of\ alternate\ $\mathscr{X}$-modules \ $u:$\ $\mathcal{F}%
\rightarrow \mathcal{G}$ consists of a family $(u_{\alpha })_{\alpha \in 
\mathcal{S}}$, $u_{\alpha }:\mathcal{F}_{\alpha }\rightarrow \mathcal{G}%
_{\alpha }$ morphism of $\OX{\alpha }$-modules, such that for each
pair $(\alpha ;j)$ the diagram commutes: 
\begin{equation}
\begin{CD} \mathcal{F}_{(\alpha ;j)} @>u_{(\alpha;j)}>>
\mathcal{G}_{(\alpha;j)}\\ @VV\varphi _{(\alpha ;j)}V @VV\psi _{(\alpha
;j)}V\\ \rho_{(\alpha;j)\ast }\mathcal{F}_{\alpha } @>u_{\alpha }>>
\rho_{(\alpha ;j)\ast}\mathcal{G}_{\alpha }\\ \end{CD} 
\tag{$D(\mathcal{F},\mathcal{G};\alpha ;j)$}
\end{equation}%
One denotes by $aMod(\mathscr{X})$\ the category of alternate $\mathscr{X}$%
-modules

\item With the notations at point \ref{pct_morf_alt_mod}, an anti-morphism
of alternate $\mathscr{X}$-modules is a family of morphisms $u=(u_{\alpha
})_{\alpha \in \mathcal{S}}$ such that the diagrams $D(\mathcal{F},%
\mathcal{G};\alpha ;j)$ anti-commute. A complex of alternate $\mathscr{X}$%
-modules with anti-morphism differentials will be called an alternate
complex of alternate $\mathscr{X}$-modules. One denotes by $aC(\mathscr{X})$
the category of alternate complexes of alternate $\mathscr{X}$-modules
\end{enumerate}
\end{definition}

To the edge $(\alpha ;j)$ of the simplicial complex $(I,\mathcal{S})$ we
associate the alternating coeficient $\varepsilon (\alpha ;j)=(-1)^{j}$.
Note that if $\mathcal{F}=((\mathcal{F}_{\alpha })_{\alpha \in \mathcal{S}%
},(\varphi _{(\alpha ;j)})_{(\alpha ;j)})$ is an $\mathscr{X}$-module then $%
alt(\mathcal{F})=((\mathcal{F}_{\alpha })_{\alpha \in \mathcal{S}%
},(\varepsilon (\alpha ;j)\varphi _{(\alpha ;j)})_{(\alpha ;j)})$ is an
alternate $\mathscr{X}$-module. One checks easily that $\ alt:Mod(\mathscr{X}%
)\rightarrow aMod(\mathscr{X})$ is an isomorphism of categories with an
obvious inverse that we denote by $alt^{-1}$. The functor $alt$ extends to
an isomorphism of categories $C(\mathscr{X})\rightarrow aC(\mathscr{X})$.
Indeed, if $\mathcal{F}^{\bullet }\in C(\mathscr{X}),$ $\mathcal{F}^{\bullet
}=((\mathcal{F}_{\alpha }^{\bullet })_{\alpha \in \mathcal{S}},(\varphi
_{(\alpha ;j)}^{\bullet })_{(\alpha ;j)})$ then the terms of $alt(\mathcal{F}%
^{\bullet })$ are obtained from the terms of $\mathcal{F}^{\bullet }$ via
the functor $alt$, while the differentials of each complex $\mathcal{F}%
_{\alpha }^{\bullet }$ are multiplied by $(-1)^{|\alpha |}$.

\begin{remark}
The notions of alternate $\mathscr{X}$-module and alternate complex of $%
\mathscr{X}$-modules do not depend on the total order on $I$. The $alt$
functors do. However for two total orders on $I$ there is a (non-unique)
functorial isomorphism between the two corresponding $alt$ functors.
\end{remark}

\refstepcounter{subsection}\textbf{\arabic{section}.\arabic{subsection}} %
\label{paragr_im_inv} \textbf{Inverse images. }Consider the following
setting:

\begin{itemize}
\item[-] $f:(I,\mathcal{S})\rightarrow $ $(J,\mathcal{T})$ a morphism of
simplicial complexes

\item[-] fixed total orders on $I$ and $J$.such that $f:I\rightarrow $ $J$
is increasing

\item[-] $\mathscr{X}=((X_{\alpha })_{\alpha \in \mathcal{S}}$, $(\rho
_{\alpha \beta })_{\alpha \subset \beta })$, $\mathscr{Y}=((Y_{\gamma
})_{\gamma \in \mathcal{T}}$, $(\rho _{\gamma \delta }^{\prime })_{\gamma
\subset \delta })$ s.s.analytic spaces indexed by $(I,\mathcal{S})$,
respectively $(J,\mathcal{T})$

\item[-] $F:\mathscr{X}\rightarrow \mathscr{Y}$ a morphism of s.s.analytic
spaces over $f$ (see Definition \ref{Def_morph_over_f}), that is $%
F=(F_{\alpha },F_{\alpha }^{\ast })_{\alpha \in \mathcal{S}}$ with $%
F_{\alpha }:X_{\alpha }\rightarrow Y_{f(\alpha )}$ morphism of analytic
spaces such that for $\alpha \subseteq \beta $ the following diagram
commutes: 
\begin{equation}
\begin{CD} X_{\beta} @>F_{\beta}>> Y_{f(\beta )}\\ @VV\rho_{\alpha \beta}V
@VV\rho_{f(\alpha) f(\beta)}^{\prime }V\\ X_{\alpha} @>F_{\alpha}>>
Y_{f(\alpha )} \end{CD}  \label{Diagr_XY_morph}
\end{equation}
\end{itemize}

Note that $\mathcal{S}$ is the disjoint union of the sets $(\mathcal{S}%
_{\gamma })_{\gamma \in \mathcal{T}}$, with 
\begin{equation}
\mathcal{S}_{\gamma }=\{\alpha \in \mathcal{S}|f(\alpha )=\gamma \}
\end{equation}%
Moreover, each $\mathcal{S}_{\gamma }$ is the union of the sets $(I(\gamma
,i))_{i\geq 0}$ where 
\begin{equation}
I(\gamma ,i)=\{\alpha \in \mathcal{S}|f(\alpha )=\gamma ,|\alpha |=|\gamma
|+i\}
\end{equation}%
We also set: 
\begin{equation}
I(i)=\bigcup\limits_{\gamma \in \mathcal{T}}I(\gamma ,i)
\end{equation}%
\ In particular $I(\gamma ,0)$ consists of all the simplexes of $\mathcal{S}$
which are in a one-to-one correspondence with $\gamma $ via $f$.

Let $\mathcal{G}\in Mod(\mathscr{Y})$ with $\mathcal{G}=((\mathcal{G}%
_{\gamma })_{\gamma \in \mathcal{T}},(\psi _{\delta \gamma })_{\gamma
\subset \delta })$. The inverse image $F^{\ast }(\mathcal{G})$ of $\mathcal{G%
}$ is, by definition, the $\mathscr{X}$-module with the components: 
\begin{equation}
F^{\ast }(\mathcal{G})_{\alpha }=F_{\alpha }^{\ast }(\mathcal{G}_{f(\alpha
)})\text{ for }\alpha \in \mathcal{S}
\end{equation}%
and connecting morphisms for all $\alpha \subset \beta $ 
\begin{equation}
\mu _{\beta \alpha }=F_{\beta }^{\ast }(\widetilde{\psi }_{f(\beta )f(\alpha
)})\text{ }
\end{equation}%
where%
\begin{equation}
\widetilde{\psi }_{f(\beta )f(\alpha )}:\rho _{f(\beta )f(\alpha
)}^{^{\prime }\ast }(\mathcal{G}_{f(\alpha )})\rightarrow \mathcal{G}%
_{f(\beta )}
\end{equation}%
is the morphism which corresponds via the usual adjunction isomorphism to
the connecting morphism 
\begin{equation}
\psi _{f(\beta )f(\alpha )}:\mathcal{G}_{f(\alpha )}\rightarrow \rho
_{f(\beta )f(\alpha )\ast }^{^{\prime }}(\mathcal{G}_{f(\beta )})
\end{equation}

One checks easily that the family of morphisms $(\mu _{\beta \alpha
})_{\alpha \subset \beta }$\ satisfies the required conditions.

\begin{remark}
\label{Rem_im_inv_incluz}Let $X$ be an analytic space, $\mathcal{U}%
=(U_{i})_{i\in I}$ an open covering of $X$, $\mathfrak{U}$ the s.s.analytic
space determined by $\mathcal{U}$ (see Example \ref{ex_acop}) and 
$i:$ $\mathfrak{U\rightarrow }X$ the morphism of s.s.analytic spaces given by the natural inclusions 
(see Example \ref{Ex_incluz_acop}). If $\mathcal{F}\in Mod(\OX{X})$ then the 
$\mathfrak{U}$-module $\mathcal{F}|\mathfrak{U}$ (see Example \ref{Ex_F|U}) coincides 
with $i^{\ast }(\mathcal{F})$.
\end{remark}

Let $\mathcal{G}\in Mod(\mathscr{Y})$ as above, $\mathcal{F}\in Mod(%
\mathscr{X})$ with $((\mathcal{F}_{\alpha })_{\alpha \in \mathcal{S}%
},(\varphi _{\alpha \beta })_{\alpha \subset \beta })$, and $v:F^{\ast }(%
\mathcal{G}\rightarrow \mathcal{F}$ a morphism of $\mathscr{X}$-modules. One remarks
that $v$ is completely determined by the family of morphisms $(v_{\alpha
})_{\alpha }$%
\begin{equation*}
v_{\alpha }:F_{\alpha }^{\ast }(\mathcal{G}_{f(\alpha )})\rightarrow 
\mathcal{F}_{\alpha }
\end{equation*}%
where $\alpha \in \mathcal{S}$ is such that $f|\alpha $\ is injective. More
precisely one checks directly the following lemma:

\begin{lemma}
\label{Lemma_crit_morf_im_inv}Let $\mathcal{F}\in Mod(\mathscr{X})$, $%
\mathcal{G}\in Mod(\mathscr{Y})$ as above. Then the morphisms $(v_{\alpha
})_{\alpha \in I(0)}$ determine a morphism of $\mathscr{X}$-modules $%
v:F^{\ast }(\mathcal{G})\rightarrow \mathcal{F}$ iff they verify the following
conditions:

\begin{enumerate}
\item For $\gamma \in \mathcal{T}$, $\beta \in I(\gamma ,1)$ let $\alpha
_{1} $, $\alpha _{2}\in $ $I(\gamma ,0)$ be the only two simplexes s.t. \linebreak
$\alpha _{1}$, $\alpha _{2}\subset \beta $. Then the following diagram
commutes: 
\begin{equation}
\begin{CD} F_{\beta }^{\ast }(\mathcal{G}_{\gamma}) @ >
\rho_{\alpha_{1}\beta}^{\ast }(v_{\alpha_{1}})>> \rho_{\alpha_{1}
\beta}^{\ast}(\mathcal{F}_{\alpha_{1}})\\ @VV\rho_{\alpha_{2}
\beta}^{\ast}(v_{\alpha_{2}})V @VV\widetilde{\varphi} _{\beta \alpha_{1}}V\\
\rho_{\alpha_{2} \beta}^{\ast }(\mathcal{F}_{\alpha_{2}})
@>\widetilde{\varphi} _{\beta \alpha_{2}}>> \mathcal{F}_{\beta}\\ \end{CD}
\label{Im_inv_cond_1}
\end{equation}

\item For $\gamma ,\delta \in \mathcal{T}$, $\gamma \subset \delta $, and $%
\alpha \in I(\gamma ,0)$, $\beta \in I(\delta ,0)$ with $\alpha \subset
\beta $ the following diagram commutes:%
\begin{equation}
\begin{CD} \rho_{\alpha \beta}^{\ast }F_{\alpha }^{\ast
}(\mathcal{G}_{\gamma}) @>\rho_{\alpha \beta}^{\ast }(v_{\alpha})>>
\rho_{\alpha \beta}^{\ast }(\mathcal{F}_{\alpha})\\ @VVF_{\beta }^{\ast
}(\widetilde{\psi} _{\delta \gamma})V @VV\widetilde{\varphi}_{\beta
\alpha}V\\ F_{\beta }^{\ast }(\mathcal{G}_{\delta }) @>v_{\beta }>>
\mathcal{F}_{\beta }\\ \end{CD}  \label{Im_inv_cond_2}
\end{equation}
\end{enumerate}
\end{lemma}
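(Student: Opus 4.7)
The proof is an if-and-only-if; the real content is in the $(\Leftarrow)$ direction, where a family indexed only by $I(0)$ must be extended to a global morphism. For $(\Rightarrow)$, I start from the defining compatibility $\rho_{\alpha\beta*}(v_\beta) \circ \mu_{\beta\alpha} = \varphi_{\beta\alpha} \circ v_\alpha$ of an $\mathscr{X}$-module morphism, and apply the $(F_\beta^*, F_{\beta*})$-adjunction to rewrite it as $v_\beta \circ \mu_{\beta\alpha}^{\mathrm{adj}} = \widetilde{\varphi}_{\beta\alpha} \circ \rho_{\alpha\beta}^*(v_\alpha)$, where $\mu_{\beta\alpha}^{\mathrm{adj}} = F_\beta^*(\widetilde{\psi}_{f(\beta)f(\alpha)})$. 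Specialising to $\alpha \subset \beta$ with both in $I(0)$ gives diagram (\ref{Im_inv_cond_2}); specialising instead to the pair of inclusions $\alpha_1 \subset \beta$ and $\alpha_2 \subset \beta$ for $\alpha_1, \alpha_2 \in I(\gamma,0)$ and $\beta \in I(\gamma,1)$, together with the fact that $\widetilde{\psi}_{\gamma\gamma}$ is the canonical isomorphism, gives diagram (\ref{Im_inv_cond_1}).

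For $(\Leftarrow)$, I extend $(v_\alpha)_{\alpha \in I(0)}$ by induction on the excess $e(\beta) = |\beta| - |f(\beta)|$. Given $\beta$ with $e(\beta) > 0$, pick some $\alpha \in I(f(\beta),0)$ with $\alpha \subset \beta$; such an $\alpha$ exists because $f|_\beta$ surjects onto $f(\beta)$. Setting $\gamma = f(\beta) = f(\alpha)$, the morphism $\widetilde{\psi}_{\gamma\gamma}$ is the identity, so $\mu_{\beta\alpha}^{\mathrm{adj}}$ reduces to the canonical pullback isomorphism $\rho_{\alpha\beta}^* F_\alpha^*(\mathcal{G}_\gamma) \cong F_\beta^*(\mathcal{G}_\gamma)$, and the adjoint relation above uniquely determines $v_\beta$.

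Three things then remain to check. \emph{Independence of the chosen $\alpha$}: any two choices $\alpha_1, \alpha_2 \subset \beta$ in $I(\gamma,0)$ are connected by a sequence of elementary swaps of a single vertex at a time (a lift of $\gamma$ inside $\beta$ is a transversal of the surjection $f|_\beta$), and in each elementary step the two candidates sit inside a common $\beta' \subset \beta$ with $\beta' \in I(\gamma,1)$, so condition (\ref{Im_inv_cond_1}) on $\beta'$ transported via $\varphi_{\beta\beta'}$ forces agreement. \emph{Compatibility for an arbitrary inclusion} $\alpha \subset \beta$: I factor it through a simplex $\alpha^\sharp \subset \beta$ with $\alpha^\sharp \in I(f(\beta),0)$, obtained by completing a transversal of $f|_\alpha$ in $\alpha$ to a transversal of $f|_\beta$ in $\beta$; on $\alpha \subset \alpha^\sharp$ the required relation comes from condition (\ref{Im_inv_cond_2}), combined when $e(\alpha) > 0$ with the defining equation of the extension applied to $\alpha$ itself, and on $\alpha^\sharp \subset \beta$ it is the defining equation of the extension. \emph{Uniqueness}: the defining equation forces $v_\beta$ as soon as $v_\alpha$ is known for any $\alpha \subset \beta$ with $f(\alpha) = f(\beta)$.

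The main obstacle will be the second verification. Condition (\ref{Im_inv_cond_2}) is stated only for $\alpha, \beta \in I(0)$, so handling general $\alpha$ requires using the defining equation of the extension to reduce to the case $\alpha \in I(0)$ first, and then factoring the resulting inclusion through $\alpha^\sharp$. Once this reduction is set up correctly, the verification becomes the gluing of a bounded number of commutative squares, and the remainder of the proof is bookkeeping with adjunctions and with the cocycle conditions on $\varphi$, $\psi$ and $\rho$.
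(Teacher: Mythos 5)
Your proposal is correct, and it fills in precisely the verification the paper leaves implicit (the paper offers no written proof, saying only that ``one checks directly the following lemma''): necessity by adjunction, sufficiency by defining $v_{\beta}$ through a chosen transversal $\alpha\subset\beta$ with $f(\alpha)=f(\beta)$ and then checking independence of the choice via single-vertex swaps and condition (\ref{Im_inv_cond_1}), and general compatibility via the factorization through $\alpha^{\sharp}$ and condition (\ref{Im_inv_cond_2}). The only cosmetic remark is that your ``induction on the excess'' is really a one-step reduction to $I(0)$, since the defining equation always refers directly to a transversal of excess zero.
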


\begin{remark}
In the same way as above (i.e. componentwise) one can construct an inverse
image functor $F^{-1}$ for s.s.sheaves of abelian groups.
\end{remark}

\refstepcounter{subsection} \textbf{\arabic{section}.\arabic{subsection}} %
\label{paragr_im_dir} \textbf{The }$F_{\sharp }$ \textbf{functor. }We use
the setting described at the beginning of paragraph \ref{paragr_im_inv}.

Let $\mathcal{F}\in aMod(\mathscr{X})$, $\mathcal{F}=((\mathcal{F}_{\alpha
})_{\alpha \in \mathcal{S}}\ ,(\varphi _{(\alpha ;j)})_{(\alpha ;j)})$. For $%
\gamma \in \mathcal{T}$ \ 
\begin{equation}
((F_{\alpha \ast }(\mathcal{F}_{\alpha }))_{\alpha },(F_{\sigma (\alpha
;j)\ast }(\varphi _{(\alpha ,j)}))_{(\alpha ;j)})_{f(\alpha )=f(\sigma
(\alpha ;j))=\gamma }
\end{equation}%
is a multicomplex of $Y_{\gamma }$-modules (recall that multicomplex means
anti-commuting rectangles) and consider the following simple complex associated to 
this multicomplex:
\begin{equation}
...\rightarrow \prod\limits_{\alpha \in I(\gamma ,i)}F_{\alpha \ast }(%
\mathcal{F}_{\alpha })\rightarrow \prod\limits_{\alpha \in I(\gamma
,i+1)}F_{\alpha \ast }(\mathcal{F}_{\alpha })\rightarrow ... 
\tag{$C^{\bullet }(\gamma )$}
\end{equation}%
with the product indexed by $I(\gamma ,0)$ in degree $0.$ The connecting
morphisms of $\mathcal{F}$\ induce anti-morphisms $C^{\bullet }(\sigma
(\gamma ;j))\rightarrow \rho _{(\gamma ;j)\ast }(C^{\bullet }(\gamma ))$ and
one checks that $(C^{\bullet }(\gamma ))_{\gamma \in \mathcal{T}}$ is an
alternated complex of alternated $\mathscr{Y}$-modules.

If we start with an alternated complex of alternated $\mathscr{X}$-modules\ $%
\mathcal{F}$\ instead of an alternated $\mathscr{X}$-module, then $%
C^{\bullet }(\gamma )$ is a double complex where the product for $\mathcal{F}%
^{0}$ indexed by $I(\gamma ,0)$ is considered in bidegree $(0,0)$.

\begin{definition}
\begin{enumerate}
\item If $\mathcal{F}\in aMod(\mathscr{X})$ then $F_{\sharp }(\mathcal{F})$
is the alternated complex of alternated $\mathscr{Y}$-modules with 
\begin{equation}
F_{\sharp }(\mathcal{F})_{\gamma }=C^{\bullet }(\gamma )
\end{equation}
and with connecting morphisms induced by those of $\mathcal{F}$

\item If $\mathcal{F}\in aC(\mathscr{X})$ then $F_{\sharp }(\mathcal{F})$ is
the alternated complex of alternated $\mathscr{Y}$-modules where $F_{\sharp
}(\mathcal{F})_{\gamma }$ is the simple complex associated to the double
complex $C^{\bullet }(\gamma )$ and the connecting morphisms are induced by
those of $\mathcal{F}$

\item If $\mathcal{F}\in Mod(\mathscr{X})$ (respectively $\mathcal{F}\in C(%
\mathscr{X})$) then $F_{\sharp }(\mathcal{F})=alt^{-1}(F_{\sharp }(alt(%
\mathcal{F})))$
\end{enumerate}
\end{definition}

One checks easily that the definition of $F_{\sharp }$ is compatible with
the natural inclusion functors $aMod(\mathscr{X})\rightarrow aC(\mathscr{X})$
and $Mod(\mathscr{X})\rightarrow C(\mathscr{X}).$

\begin{example}
The components $(F_{\alpha }^{\ast })_{\alpha \in \mathcal{S}}$ of the
morphism $F:\mathscr{X}\rightarrow \mathscr{Y}$ determine a morphism of $%
\mathscr{Y}$-modules \ $F^{\sharp}:\OXss{Y}\rightarrow F_{\sharp }(\OXss{X})$
\end{example}

\begin{example}
If \ $f:(I,\mathcal{S})\rightarrow $ $(J,\mathcal{T})$ is bijective (in
particular if $f$ is the identity of $(I,\mathcal{S})$) then $F_{\sharp }(%
\mathcal{F})_{\gamma }=F_{\alpha \ast }(\mathcal{F}_{\alpha })$ where $%
f(\alpha )=\gamma $. Remark that if $F:X\rightarrow Y$ is a morphism of
analytic spaces and $\mathcal{F}\in Mod(\OX{X})$ then the usual
direct image $F_{\ast }(\mathcal{F})$ coincides with $F_{\sharp }\mathcal{F}$
as module over $X$\ seen as s.s.analytic space indexed by $K(pt)$ (see
Example \ref{anal_sp_Kpt})
\end{example}

\begin{example}
\label{Ex_Cech_complex}Let $X$ be an analytic space, $\mathcal{U}%
=(U_{i})_{i\in I}$ an open covering of $X$, and \linebreak 
$\mathcal{F}\in Mod(\OX{X})$. If $i:\mathfrak{U}\rightarrow X$ is the 
morphism of s.s.analytic spaces over \linebreak 
$a_{\mathcal{N(U)}}:\mathcal{N(U)}\rightarrow K(pt)$ given by
the inclusions (see Example \ref{Ex_incluz_acop}) then $i_{\sharp }(\mathcal{%
F}|\mathfrak{U})$ is the \v{C}ech complex of $\mathcal{F}$ with respect to
the covering $\mathcal{U}$. Note that the natural morphism \linebreak 
$\mathcal{F}\rightarrow i_{\sharp }(\mathcal{F}|\mathfrak{U})$ is a
quasi-isomorphism (see e.g. \cite{FAC}, \textit{chap 1, \S 4, Lemme 1}).
\end{example}

Since the Cartesian product is associative, the form of the terms of the
complex ( $C^{\bullet }(\gamma )$) implies that the functor $F_{\sharp }$\
of s.s.modules commutes with the composition of morphisms of s.s.analytic
spaces. Thus the following lemma holds:

\begin{lemma}
\label{Lemma_Comp_im_dir}Let $f:(I_{1},\mathcal{S}_{1})\rightarrow $ $(I_{2},%
\mathcal{S}_{2})$, $g:(I_{2},\mathcal{S}_{2})\rightarrow (I_{3},\mathcal{S}%
_{3})$ be morphisms of simplicial complexes and assume that we have fixed
total orders on $I_{1},$ $I_{2},$ $I_{3}$. Let $F:\mathscr{X}\rightarrow %
\mathscr{Y}$,\linebreak 
$G:\mathscr{Y}\rightarrow \mathscr{Z}$ be morphisms of s.s.analytic spaces over $f$, 
respectively $g$, where $\mathscr{X}$, respectively $\mathscr{Y}$, repectively 
$\mathscr{Z}$ is a s.s.analytic space relative to $(I_{1},\mathcal{S}_{1})$, 
Respectively to $(I_{2},\mathcal{S}_{2})$, and $(I_{3},\mathcal{S}_{3})$ 
Then if $\mathcal{F}\in aMod(\mathscr{X})$ or $\mathcal{F}\in aC(\mathscr{X})$ 
or $\mathcal{F} \in Mod(\mathscr{X})$ or $\mathcal{F}\in C(\mathscr{X})$ or 
$\mathcal{F}\in aC(\mathscr{X})$ one has 
\begin{equation}
(G\circ F)_{\sharp }(\mathcal{F})=G_{\sharp }F_{\sharp }(\mathcal{F})
\end{equation}%
\end{lemma}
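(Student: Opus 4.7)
The statement is an identification of two functors, so the plan is to unwind both sides at an arbitrary simplex $\zeta \in \mathcal{S}_3$ and check the equality term by term, then do the same for the differentials and for the connecting morphisms of the alternate $\mathscr{Z}$-module structure. The three ingredients that make the identification work are (a) functoriality of direct image, $(G_\beta)_\ast (F_\alpha)_\ast = (G_\beta F_\alpha)_\ast = ((GF)_\alpha)_\ast$ whenever $f(\alpha) = \beta$; (b) the fact that direct image commutes with arbitrary products of sheaves (it is a right adjoint); and (c) associativity of Cartesian products.

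First I treat the case $\mathcal{F} \in aMod(\mathscr{X})$. For $\zeta \in \mathcal{S}_3$, the degree-$n$ term of $(G \circ F)_\sharp(\mathcal{F})_\zeta$ is $\prod_{\alpha} (GF)_{\alpha\ast}(\mathcal{F}_\alpha)$, indexed by $\alpha \in \mathcal{S}_1$ with $(gf)(\alpha) = \zeta$ and $|\alpha| = |\zeta| + n$. On the other side, $G_\sharp F_\sharp(\mathcal{F})_\zeta$ in total degree $n$ is the sum over $i + j = n$ of $\prod_{\beta \in \mathcal{S}_2,\ g(\beta) = \zeta,\ |\beta| = |\zeta| + j} G_{\beta\ast}\bigl(\prod_{\alpha \in \mathcal{S}_1,\ f(\alpha) = \beta,\ |\alpha| = |\beta| + i} F_{\alpha\ast}(\mathcal{F}_\alpha)\bigr)$; using (a)--(c) this is $\prod_{(\alpha,\beta)} (GF)_{\alpha\ast}(\mathcal{F}_\alpha)$ with $\beta = f(\alpha)$ determined by $\alpha$, hence exactly the same product. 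The key conceptual point is that the partition of the set $\{\alpha : (gf)(\alpha) = \zeta,\ |\alpha| = |\zeta| + n\}$ according to $j = |f(\alpha)| - |\zeta|$ corresponds exactly to the bigrading of the double complex on the $G_\sharp F_\sharp$ side.

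Next I verify that the differentials coincide. Edges $(\alpha;j)$ in $\mathcal{S}_1$ incident to an $\alpha$ in the above index set split into two kinds: those for which $f(\sigma(\alpha;j)) = f(\alpha)$ (the vertex $v(\alpha;j)$ has an $f$-twin in $\alpha$), contributing the internal differential of $F_\sharp(\mathcal{F})_{f(\alpha)}$ inside the $G_\sharp$ totalization; and those for which $f(\sigma(\alpha;j)) = \sigma(f(\alpha);k)$ for the appropriate $k$, which become the connecting morphisms of $F_\sharp(\mathcal{F})$ as a $\mathscr{Y}$-module and hence contribute to the $G_\sharp$-differential. On the $(GF)_\sharp$ side both kinds appear together as edges of $\mathcal{S}_1$. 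The signs match because, since $f$ is order-preserving, the two-step sign $(-1)^i(-1)^j$ from the totalization of the bicomplex agrees with the single alternating sign $\varepsilon(\alpha;j) = (-1)^j$ used in $(GF)_\sharp$, with index shift handled by the standard Koszul rule (this is exactly the content of anti-commutativity of the rectangles $D(\mathcal{F};\alpha;j,k)$ split according to the fibers of $f$).

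For the connecting morphisms making the output an alternate $\mathscr{Z}$-module, an edge $(\zeta;\ell)$ in $\mathcal{S}_3$ is handled on both sides by pushing forward the connecting morphisms of $\mathcal{F}$ along edges $(\alpha;j)$ of $\mathcal{S}_1$ whose image under $gf$ is the edge $(\zeta;\ell)$; again associativity of products and functoriality of direct image show the two descriptions agree on the nose, and the anti-commutativity of the required rectangles is inherited from that of $\mathcal{F}$. Finally, the remaining cases ($\mathcal{F} \in aC(\mathscr{X})$, $\mathcal{F} \in Mod(\mathscr{X})$, $\mathcal{F} \in C(\mathscr{X})$) follow: the alternate complex case is the same argument with an extra grading handled by totalization, and the non-alternate cases reduce to the alternate ones by applying $alt$ and $alt^{-1}$, using that $F_\sharp$ is defined to be compatible with these isomorphisms. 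The main obstacle is the sign bookkeeping in Step~3, but this is forced by the anti-commutativity conditions in the definitions and is essentially the standard check that totalization of a bicomplex is associative.
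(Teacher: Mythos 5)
Your proposal is correct and takes essentially the same approach as the paper, which offers no written proof beyond the preceding remark that associativity of the Cartesian product and the explicit form of the terms of $C^{\bullet}(\gamma)$ yield the identity. Your termwise unwinding at a simplex $\zeta\in\mathcal{S}_{3}$, the matching of differentials via the splitting of edges $(\alpha;j)$ according to whether $f$ collapses the vertex, and the reduction of the remaining cases through $alt$ and totalization are exactly the details the paper leaves implicit.
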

\begin{flushright}
$\square$
\end{flushright}
 
For $\mathcal{F}\in Mod(\mathscr{X})$ set 
\begin{equation}
F_{\ast }(\mathcal{F})=Z^{0}(F_{\sharp }(\mathcal{F}))
\end{equation}%
where $Z^{0}$ denotes the sheaf of $0$ degree cocycles. One checks that the
definition of $F_{\ast }(\mathcal{F})$ agrees with the one given in \cite%
{Flenner} \S 2.A.

There is an obvious natural inclusion morphism:%
\begin{equation}
F_{\ast }(\mathcal{F})\hookrightarrow F_{\sharp }(\mathcal{F})
\label{Morph_F*F_sharp}
\end{equation}

\begin{remark}
One checks that the morphism (\ref{Morph_F*F_sharp}) induces an isomorphism
between the respective derived functors.
\end{remark}

\begin{remark}
Lemma \ref{Lemma_Comp_im_dir} immediately implies that ($G\circ F)_{\ast }(%
\mathcal{F})=G_{\ast }F_{\ast }(\mathcal{F})$
\end{remark}

Let $\mathcal{F}\in Mod(\mathscr{X})$ be as above, $\mathcal{G}\in Mod(%
\mathscr{Y})$ with $\mathcal{G}=((\mathcal{G}_{\gamma })_{\gamma \in 
\mathcal{T}},(\psi _{\delta \gamma })_{\gamma \subset \delta })$ and let $u:%
\mathcal{G}\rightarrow F_{\sharp }(\mathcal{F})$ be a morphism of $%
\mathscr{Y}$-modules. Note that $u$ factors through $F_{\ast }(\mathcal{F})$%
. Hence $u$ is completely determined by the family of morphisms $(u_{\gamma
\alpha })_{\gamma \alpha }$ where $\gamma \in \mathcal{T}$, $\alpha \in
I(\gamma ,0)$, $\ $with%
\begin{equation}
u_{\gamma \alpha }:\mathcal{G}_{\gamma }\rightarrow F_{\alpha \ast }(%
\mathcal{F}_{\alpha })
\end{equation}%
Conversely, one checks directly the following lemma that describes the
families\ $(u_{\gamma \alpha })_{\gamma \alpha }$ as above that give a
morphism of $\mathscr{Y}$-modules $u:\mathcal{G}\rightarrow F_{\sharp }(%
\mathcal{F})$:

\begin{lemma}
\label{Lemma_crit_morf_im_dir}The morphisms $(u_{\gamma \alpha })_{\gamma
\alpha }$ are the components of a morphism of $\mathscr{Y}$-modules $u:%
\mathcal{G}\rightarrow F_{\sharp }(\mathcal{F})$ iff they verify the
following conditions:

\begin{enumerate}
\item For $\gamma \in \mathcal{T}$, $\beta \in I(\gamma ,1)$ let $\alpha
_{1} $, $\alpha _{2}\in $ $I(\gamma ,0)$ be the only two simplexes s.t. $%
\alpha _{1}$, $\alpha _{2}\subset \beta $. Then the following diagram
commutes: 
\begin{equation}
\begin{CD} \mathcal{G}_{\gamma} @>u_{\gamma \alpha_{1}}>> F_{\alpha_{1}
\ast}(\mathcal{F}_{\alpha_{1}})\\ @VVu_{\gamma \alpha_{2}}V @VVF_{\alpha_{1}
\ast}(\varphi _{\beta \alpha_{1}})V\\ F_{\alpha_{2}
\ast}(\mathcal{F}_{\alpha_{2}}) @>F_{\alpha_{2} \ast}(\varphi _{\beta
\alpha_{2}})>> F_{\beta \ast}(\mathcal{F}_{\beta})\\ \end{CD}
\label{Im_dir_cond_1}
\end{equation}

\item For $\gamma ,\delta \in \mathcal{T}$, $\gamma \subset \delta $, and $%
\alpha \in I(\gamma ,0)$, $\beta \in I(\delta ,0)$ with $\alpha \subset
\beta $ the following diagram commutes:%
\begin{equation}
\begin{CD} \mathcal{G}_{\gamma} @>u_{\gamma \alpha}>> F_{\alpha
\ast}(\mathcal{F}_{\alpha})\\ @VV \psi _{\delta \gamma}V @VVF_{\alpha
\ast}(\varphi_{\beta \alpha})V\\ \rho_{\gamma \delta \ast}^{\prime}
(\mathcal{G}_{\delta }) @>\rho_{\gamma \delta \ast}^{\prime}(u_{\delta \beta
})>> F_{\alpha \ast} \rho_{\alpha \beta \ast}(\mathcal{F}_{\beta })\\
\end{CD}  \label{Im_dir_cond_2}
\end{equation}%
\end{enumerate}
\end{lemma}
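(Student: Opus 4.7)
\medskip

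\noindent\textbf{Proof proposal.} The plan is to unpack the definition of $F_{\sharp}(\mathcal{F})$ degree by degree, together with the definition of a morphism of $\mathscr{Y}$-modules, and to show that the two types of compatibility -- cocycle in the complex $C^{\bullet}(\gamma)$ and compatibility with the connecting morphisms in $\mathscr{Y}$ -- translate precisely into conditions (1) and (2).

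First I would observe that because $\mathcal{G}$ is concentrated in degree $0$ and $F_{\sharp}(\mathcal{F})$ is a complex supported in non-negative degrees, any morphism of $\mathscr{Y}$-modules $u:\mathcal{G}\rightarrow F_{\sharp}(\mathcal{F})$ factors through $F_{\ast}(\mathcal{F})=Z^{0}(F_{\sharp}(\mathcal{F}))$, which is canonically a subsheaf of the degree-zero term. For each $\gamma\in\mathcal{T}$ this term is $\prod_{\alpha\in I(\gamma,0)}F_{\alpha\ast}(\mathcal{F}_{\alpha})$, so the datum of the component $u_{\gamma}:\mathcal{G}_{\gamma}\rightarrow (F_{\sharp}(\mathcal{F}))^{0}_{\gamma}$ is equivalent to the datum of the family $(u_{\gamma\alpha})_{\alpha\in I(\gamma,0)}$ used in the lemma. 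This reduces the problem to characterising, in terms of the $(u_{\gamma\alpha})$, when (a) $u_{\gamma}$ lands in $Z^{0}$ and (b) the $(u_{\gamma})_{\gamma}$ commute with the connecting morphisms of $\mathscr{Y}$-modules.

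For (a), I would examine the degree-$0$ differential $d^{0}_{\gamma}\colon\prod_{\alpha\in I(\gamma,0)}F_{\alpha\ast}(\mathcal{F}_{\alpha})\rightarrow\prod_{\beta\in I(\gamma,1)}F_{\beta\ast}(\mathcal{F}_{\beta})$. By construction of $C^{\bullet}(\gamma)$ the $\beta$-component of $d^{0}_{\gamma}$ is built from the connecting morphisms $\varphi_{\beta\alpha_{1}}$ and $\varphi_{\beta\alpha_{2}}$, where $\alpha_{1},\alpha_{2}\in I(\gamma,0)$ are the two faces of $\beta$ obtained by deleting the two vertices of $\beta$ that $f$ identifies to one another; the alternating signs inherited from the multicomplex structure mean that $d^{0}_{\gamma}\circ u_{\gamma}=0$ is precisely the equality expressed by diagram (\ref{Im_dir_cond_1}). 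Hence $u_{\gamma}$ factors through $F_{\ast}(\mathcal{F})_{\gamma}$ iff condition (1) holds for every $\gamma$.

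For (b), I would unpack the connecting morphisms of $F_{\sharp}(\mathcal{F})$ for an inclusion $\gamma\subset\delta$ (it suffices by the reconstruction remark to do this along an edge). These morphisms are induced at the component level by the $F_{\beta\ast}(\varphi_{\beta\alpha})$, for pairs $\alpha\subset\beta$ with $\alpha\in I(\gamma,0)$ and $\beta\in I(\delta,0)$; the $\mathscr{Y}$-module compatibility $\rho'_{\gamma\delta\ast}(u_{\delta})\circ\psi_{\delta\gamma}=(\text{connecting morphism of }F_{\sharp}\mathcal{F})\circ u_{\gamma}$, read off on each such component, gives exactly diagram (\ref{Im_dir_cond_2}). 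Conversely, a family $(u_{\gamma\alpha})$ satisfying (1) and (2) assembles into morphisms $u_{\gamma}$ landing in $F_{\ast}(\mathcal{F})_{\gamma}$ that are compatible with the $\mathscr{Y}$-structure, hence into a morphism $u:\mathcal{G}\rightarrow F_{\sharp}(\mathcal{F})$.

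The main obstacle, and the only genuinely non-routine part, is the bookkeeping in step (b): one must verify that expressing the compatibility along every edge of $(J,\mathcal{T})$ in terms of the components indexed by $I(\gamma,0)$ and $I(\delta,0)$ reduces exactly to diagram (\ref{Im_dir_cond_2}), and that all the induced edge-relations of the complex $F_{\sharp}(\mathcal{F})$ follow from these by the same reconstruction argument used earlier for $\mathscr{X}$-modules. Once this combinatorics is checked, the equivalence is a direct verification.
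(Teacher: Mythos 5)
Your proposal is correct and follows exactly the route the paper intends: the paper offers no written argument (it says "one checks directly"), and the direct verification you describe — factoring through $Z^{0}(F_{\sharp}(\mathcal{F}))$ so that condition (1) is the vanishing of $d^{0}\circ u_{\gamma}$ on each $\beta$-component, and condition (2) is the compatibility with the degree-zero connecting morphisms read off componentwise via $F_{\alpha}\circ\rho_{\alpha\beta}=\rho'_{\gamma\delta}\circ F_{\beta}$ — is precisely that check. No gaps.
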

\begin{flushright}
$\square$
\end{flushright}

\begin{remark}
Conditions in Lemma \ref{Lemma_crit_morf_im_dir} and those in Lemma \ref%
{Lemma_crit_morf_im_inv} can be obtained from one another by adjunction.
Using the two lemmas one checks that the functor $F_{\ast }$ on $Mod(%
\mathscr{X})$ is indeed a right adjoint for $F^{\ast }$.
\end{remark}

\section{Embedding Atlases \label{Sect_Atlases}}

\refstepcounter{subsection}\textbf{\ \arabic{section}.%
\arabic{subsection}} \label{paragr_emb_triples}\textbf{Embedding triples.}
Let $i:$ $X\hookrightarrow D$ be a closed embedding of the analytic space $X$
in the complex manifold $D$. $(X,i,D)$ will be called an embedding triple. A
morphism of embedding triples $(f,\tilde{f}):(X_{1},i_{1},D_{1})\rightarrow
(X_{2},i_{2},D_{2})$ is a pair of morphisms of analytic spaces such that the
following diagram commutes:
\begin{equation}
\begin{CD} X_{1} @>{f}>> X_{2} \\ @VV{i_{1}}V @VV{i_{2}}V\\ D_{1}
@>{\tilde{f}}>> D_{2} \end{CD}  \label{morph_embed}
\end{equation}%
If $\tilde{f}$ is clear from the context we sometimes write $f$ instead of $%
(f,\tilde{f})$.

A complex manifold $D$ will be identified with the embedding triple 
$(D,id,D) $.

If $(X,i,D)$, $(Y,i^{\prime },D^{\prime })$ are embedding triples and $%
f:X\rightarrow Y$ is a morphism of analytic spaces, then, in general, there
does not exist $\tilde{f}:D\rightarrow D^{\prime }$ such that $(f,\tilde{f})$
is a morphism of embedding triples. However the following result can be
checked easily:

\begin{lemma}
\label{Lemma_complet_morph} \textbf{1.} Let $(X,i,D)$, $(Y,i^{\prime
},D^{\prime })$ be embedding triples and \linebreak $f:X\rightarrow Y$ a
morphism of analytic spaces. Then $\ j=(i_{,}i^{\prime }\circ
f):X\rightarrow D\times D^{\prime }$ is a closed embedding and we have
natural morphisms: 
\begin{equation}
(X,i,D)\overset{(id,p_{1})}{\longleftarrow }(X,j,D\times D^{\prime })\overset%
{(f,p_{2})}{\rightarrow }(Y,i^{\prime },D^{\prime })
\end{equation}%
where $p_{1},p_{2}$ are the projections. Moreover, assume we have another
embedding triple $(X,i_{1},D_{1})$ and morphisms 
\begin{equation}
(X,i,D)\overset{(id,q_{1})}{\longleftarrow }(X,i_{1},D_{1})\overset{(f,q_{2})%
}{\rightarrow }(Y,i^{\prime },D^{\prime })
\end{equation}%
Then $\alpha =(id,(q_{1},q_{2})):(X,i_{1},D_{1})\rightarrow $ $(X,j,D\times
D^{\prime })$ is the unique morphism s.t. the following diagram
commutes:
\begin{equation}
\begin{tikzcd}[column sep=1.5cm] 
(X,i,D) 
& (X,j,D \times D^{\prime}) 
\arrow{l}[swap]{(id, p_{1})} \arrow{r}{(f, p_{2})} 
& (Y,i^{\prime},D^{\prime}) \\ 
& (X,i_{1},D_{1})
\arrow{ul}{(id, q_{1})} 
\arrow{u}{\alpha} \arrow{ur}[swap]{(f, q_{2})} 
\end{tikzcd}  
\end{equation}

\textbf{2. }For $s=1$, $2$ let $(Y_{s},i_{s}^{\prime },D_{s}^{\prime })$ be
embedding triples and $f_{s}:X\rightarrow Y_{s}$ morphisms of analytic
spaces Then $\ j_{12}=(i_{,}i_{1}^{\prime }\circ f_{1},i_{2}^{\prime }\circ
f_{2}):X\rightarrow D\times D_{1}^{\prime }\times D_{2}^{\prime }$ is a
closed embedding and there exists unique morphisms $\alpha _{s}$\ s.t. the
following diagrams commute: 
\begin{equation}
\begin{tikzcd}[column sep=1.5cm] 
(X,i,D) 
& (X,j,D \times D^{\prime}_{1} \times D^{\prime}_{2}) 
\arrow{l}[swap]{(id, p_{1})} \arrow{d}{\alpha_{s}}
\arrow{r}{(f_{s}, p^{\prime}_{s})} 
& (Y,i^{\prime}_{s},D^{\prime}_{s}) \\ 
& (X,j_{s},D \times D^{\prime}_{s})
\arrow{ul}{(id, p_{1})} 
\arrow{ur}[swap]{(f_{s}, p_{2})} 
\end{tikzcd}  
\end{equation}
where $j_{s}=(i_{,}i_{s}^{\prime }\circ f_{s})$ and $p_{1}, p_{2}, p_{s}^{%
\prime }$ are the obvious projections.
\end{lemma}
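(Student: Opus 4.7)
The plan is to verify both parts by routine checks resting on a single substantive point: that $j$ (respectively $j_{12}$) is a closed embedding. Once that is established, the commutativity of the triangles and the uniqueness of $\alpha$ (respectively $\alpha_s$) are immediate consequences of the universal property of the Cartesian product in the category of complex manifolds.

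First I would verify that $j = (i, i' \circ f) : X \to D \times D'$ is a closed embedding. Since $p_1 \circ j = i$ is a closed embedding by hypothesis and $p_1 : D \times D' \to D$ is separated (because $D'$ is Hausdorff), the standard cancellation result forces $j$ itself to be a closed embedding. Alternatively, one may factor $j$ as
\[
X \xrightarrow{(\mathrm{id}_X,\, i'\circ f)} X \times D' \xrightarrow{i \times \mathrm{id}_{D'}} D \times D',
\]
in which the first arrow is the graph embedding of $i'\circ f$, closed in $X\times D'$ because $D'$ is Hausdorff, and the second is a closed embedding because $i$ is. The identities $p_1 \circ j = i$ and $p_2 \circ j = i' \circ f$ then show that $(\mathrm{id}, p_1)$ and $(f, p_2)$ are bona fide morphisms of embedding triples, proving the first half of Part~1.

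For the universal property, given $(X, i_1, D_1)$ with morphisms $(\mathrm{id}, q_1)$ to $(X,i,D)$ and $(f, q_2)$ to $(Y,i',D')$, I set $\alpha = (\mathrm{id}_X, (q_1, q_2))$, where $(q_1, q_2) : D_1 \to D \times D'$ is the map produced by the universal property of the product. This is a morphism of embedding triples because
\[
(q_1, q_2) \circ i_1 = (q_1 \circ i_1,\, q_2 \circ i_1) = (i,\, i' \circ f) = j,
\]
using $q_1 \circ i_1 = i$ and $q_2 \circ i_1 = i' \circ f$. Commutativity of the two triangles reduces to $p_s \circ (q_1, q_2) = q_s$ for $s = 1, 2$. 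Conversely, any $\alpha = (\alpha_0, \alpha_1)$ fitting into the diagram must satisfy $\alpha_0 = \mathrm{id}_X$ on the $X$-component and $p_s \circ \alpha_1 = q_s$ on the $D$-component, forcing $\alpha_1 = (q_1, q_2)$ by the universal property.

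Part~2 is an iteration of the same argument, with the triple product $D \times D'_1 \times D'_2$ in place of $D \times D'$: one checks that $j_{12}$ is a closed embedding exactly as above, and defines $\alpha_s$ by the identity on $X$ together with the projection $(p_1, p'_s) : D \times D'_1 \times D'_2 \to D \times D'_s$; commutativity and uniqueness then follow componentwise as in Part~1. The main obstacle, and really the only non-formal point, is the verification that $(i, i'\circ f)$ is a closed embedding; everything else is a mechanical unwinding of the universal property of products, which is presumably why the author presents the lemma as one that is easily checked.
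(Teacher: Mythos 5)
Your proof is correct, and it supplies exactly the routine verification the paper omits (the lemma is introduced with ``the following result can be checked easily'' and no proof is given). The one non-formal point you rightly isolate --- that $j=(i,i'\circ f)$ is a closed embedding, via the graph factorization or the cancellation property for separated targets --- together with the componentwise use of the universal property of the product is precisely the intended argument.
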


\begin{remark}
Under the hypothesis of Lemma \ref{Lemma_complet_morph}, if $D_{1}\subset 
\mathbb{C}^{n}$, $D_{2}\subset \mathbb{C}^{m}$ are Stein open sets
then one checks that there exists a Stein open subset $D_{1}^{^{\prime
}}\subset D_{1}$ and $\tilde{f}:D_{1}^{^{\prime }}\rightarrow D_{2}$ such
that the following diagram commutes: 
\begin{equation}
\begin{CD} D_{1} @<{i}<< D_{1}^{\prime}@>{\tilde{f}}>> D_{2}\\ @AA{i_{1}}A
@AA{i_{1}}A@AA{i_{2}}A\\ X @<{id}<< X @>{f}>> Y \end{CD}
\end{equation}%
i.e we have the diagram of embedding triples:%
\begin{equation}
(X,i_{1},D_{1})\overset{(id,i)}{\hookleftarrow }(X,i_{1},D_{1}^{^{\prime }})%
\overset{(f,\tilde{f})}{\rightarrow }(Y,i_{2},D_{2})
\end{equation}
\end{remark}

For brevity we shall say s.s.embedding triple instead of s.s.system of
embedding triples

\begin{remark}
\label{Rem_ss_embed_triple}A s.s.embedding triple $(X_{\alpha },k_{\alpha
},D_{\alpha })_{\alpha \in S}$ can be seen as a triple $(\mathscr{X},k,%
\mathscr{D})$ where $\mathscr{X}=(X_{\alpha })_{\alpha \in \mathcal{S}}$ is
s.s.analytic space, $\mathscr{D}=(D_{\alpha })_{\alpha \in \mathcal{S}}$ is
a s.s.complex manifold, and $k:\mathscr{X}\rightarrow \mathscr{D}$\ is a
morphism of s.s.analytic spaces such that each $k_{\alpha }:X_{\alpha
}\rightarrow {D}_{\alpha }$ is a closed embedding.
\end{remark}

\pagebreak
\refstepcounter{subsection} \textbf{\arabic{section}.\arabic{subsection}} 
\textbf{Embedding atlases.\label{paragr_Atlas}}

\begin{definition}
\label{def_atlas}

\begin{enumerate}
\item Let $X$ be an analytic space. An embedding atlas of $X$ consists of a
family of embedding triples $\mathcal{A}=(U_{i},k_{i},D_{i})_{i\in I}$ such
that the family \linebreak $cov(\mathcal{A})=(U_{i})_{i\in I}$ is an open covering of $%
X$. An embedding triple $(U_{i},k_{i},D_{i})$\ of $\mathcal{A}$ will be
called a chart. The pair $(X,\mathcal{A})$ will be called a locally embedded
analytic space or, sometimes, a local embedding of $X$.

\item Let $\mathcal{A}=(U_{i},k_{i},D_{i})_{i\in I}$ \ and $\mathcal{B}%
=(V_{j},k_{j}^{\prime },D_{j}^{\prime })_{j\in J}$ \ be embedding atlases of
the analytic space $X$, respectively $Y$. A morphism of locally embedded
analytic spaces $F:(X,\mathcal{A})\rightarrow (Y,\mathcal{B})$ consists of a
triple $(f,\tau ,(\tilde{f}_{i})_{i\in I})$ where:

\begin{enumerate}
\item[-] $f:X\rightarrow Y$ is a morphism of analytic spaces

\item[-] $\tau :I\rightarrow J$ is a refinement mapping such that $%
f(U_{i})\subset V_{\tau (i)}$ for all $i\in I$

\item[-] $\tilde{f}_{i}:$ $D_{i}\rightarrow D_{\tau (i)}^{\prime }$ is a
morphism of complex manifolds such that $(f|U_{i},\tilde{f}%
_{i}):(U_{i},k_{i},D_{i})\rightarrow (V_{\tau (i)},k_{\tau (i)}^{\prime
},D_{\tau (i)}^{\prime })$\ is a morphism of embedding triples.
\end{enumerate}
\end{enumerate}
\end{definition}

We say that $\mathcal{A}$ is locally finite if the open covering $cov(%
\mathcal{A})$ is locally finite.

In particular, an embedding triple $(X,i,D)$ can be seen as an embedding
atlas of $X$\ with one chart.

\begin{lemma}
\label{Lemma_assoc_ss_triples} \textbf{1. }Let $(X,\mathcal{A})$ be a
locally embedded analytic space with $\mathcal{A}%
=(U_{i},k_{i},D_{i})_{i\in I}$. There exists a s.s.embedding
triple $(\mathfrak{U},k,\mathfrak{D})=(U_{\alpha },k_{\alpha
},D_{\alpha })_{\alpha \in \mathcal{N(U)}}$ indexed by the simplicial
complex $(I,\mathcal{N}(cov(\mathcal{A})))$\ such that the embedding triples
corresponding to $0$-length simplexes coincide with the embedding triples of
the atlas $\mathcal{A}$.

\textbf{2.} If $f:(X,\mathcal{A})\rightarrow (Y,\mathcal{B})$ is a morphism
of embedded analytic spaces then $f$ induces a morphism%
\begin{equation*}
F:(\mathfrak{U},k,\mathfrak{D})\rightarrow (\mathfrak{V},k^{\prime },%
\mathfrak{D}^{\prime })
\end{equation*}%
between the respective associated s.s.embedding triples. Moreover the
following diagram commutes:%
\begin{equation*}
\begin{CD} \mathfrak{U} @>{F}>> \mathfrak{V} \\ @VV{b}V @VV{b^{\prime}}V\\ X
@>{f}>> Y \end{CD}
\end{equation*}
\end{lemma}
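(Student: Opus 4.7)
\medskip
\textbf{Proof plan.} The plan is to construct $(\mathfrak{U},k,\mathfrak{D})$ by combining the nerve construction of Example~\ref{ex_acop} with the product construction of Example~\ref{ex_prod}. For each simplex $\alpha=\{i_0<\cdots<i_n\}\in\mathcal{N}(\mathrm{cov}(\mathcal{A}))$, set $U_\alpha=\bigcap_{i\in\alpha}U_i$ with inclusions as connecting morphisms, $D_\alpha=\prod_{i\in\alpha}D_i$ with projections as connecting morphisms, and $k_\alpha:U_\alpha\to D_\alpha$ the ``diagonal'' embedding $x\mapsto(k_i(x))_{i\in\alpha}$. Each $k_\alpha$ is closed, being the restriction to a closed subset of a product of closed embeddings, and the compatibility $k_\alpha\circ\rho_{\alpha\beta}=\tilde\rho_{\alpha\beta}\circ k_\beta$ for $\alpha\subset\beta$ is factorwise trivial. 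Since the product over a one-element set collapses, the embedding triples indexed by $0$-length simplexes recover the original charts of $\mathcal{A}$, as required.

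For Part~2, I first note that the refinement $\tau:I\to J$ extends to a morphism of simplicial complexes $\mathcal{N}(\mathrm{cov}(\mathcal{A}))\to\mathcal{N}(\mathrm{cov}(\mathcal{B}))$: for $\alpha\in\mathcal{N}(\mathrm{cov}(\mathcal{A}))$ one has $\emptyset\neq f(U_\alpha)\subset V_{\tau(\alpha)}$, so $\tau(\alpha)\in\mathcal{N}(\mathrm{cov}(\mathcal{B}))$. On spaces take the forced choice $F_\alpha=f|_{U_\alpha}:U_\alpha\to V_{\tau(\alpha)}$; compatibility with the inclusions is trivial. On manifolds, fix a total order on $I$ and for each $\alpha$ and each $j\in\tau(\alpha)$ put $\sigma_\alpha(j)=\min\{i\in\alpha:\tau(i)=j\}$; then define $\tilde F_\alpha:\prod_{i\in\alpha}D_i\to\prod_{j\in\tau(\alpha)}D'_j$ by setting its $j$-th component to $\tilde f_{\sigma_\alpha(j)}\circ\mathrm{pr}_{\sigma_\alpha(j)}$. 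The embedding-triple identity $\tilde F_\alpha\circ k_\alpha=k'_{\tau(\alpha)}\circ F_\alpha$ then follows from the per-index identity $\tilde f_i\circ k_i=k'_{\tau(i)}\circ f|_{U_i}$, independently of which preimage $i$ is chosen. The outer square with $b:\mathfrak{U}\to X$ and $b':\mathfrak{V}\to Y$ commutes because on each $U_\alpha$ both paths restrict to $f|_{U_\alpha}$.

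The main obstacle is the compatibility of the $\tilde F_\alpha$'s with the projection connecting morphisms $\tilde\rho_{\alpha\beta}$ for $\alpha\subset\beta$, i.e.\ the equality $\tilde\rho'_{\tau(\alpha)\tau(\beta)}\circ\tilde F_\beta=\tilde F_\alpha\circ\tilde\rho_{\alpha\beta}$. Tracing the definitions, this reduces to requiring $\sigma_\alpha(j)=\sigma_\beta(j)$ whenever $j\in\tau(\alpha)$, which is not automatic under the naive ``min'' selection when $\tau|_\beta$ fails to be injective (the minimum preimage of $j$ in $\beta$ may lie outside $\alpha$). To overcome this I would either refine the construction of $\mathfrak{D}$ in the spirit of Lemma~\ref{Lemma_complet_morph}, replacing the plain product by a larger manifold that absorbs the ambiguity while leaving the $0$-length triples unchanged, or make a coherent global choice of sections $\sigma_\alpha$ (for instance by first restricting to $\alpha\in\mathcal{N}(\mathrm{cov}(\mathcal{A}))$ on which $\tau$ is injective and reducing the general case to this one) ensuring the required equality uniformly in $\alpha\subset\beta$.
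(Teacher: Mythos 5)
Your Part 1 coincides with the paper's proof, which consists precisely of this construction: $\mathfrak{U}$ from Example \ref{ex_acop}, $\mathfrak{D}$ from Example \ref{ex_prod}, and $k_{\alpha }=(k_{i})_{i\in \alpha }$, together with the (correct, if tersely justified) observation that each $k_{\alpha }$ is a closed embedding. The paper proves nothing beyond this; in particular it offers no argument for Part 2.

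For Part 2 you have identified a genuine obstruction, and you should know that the second repair you propose cannot work. A coherent system of sections with $\sigma _{\alpha }(j)=\sigma _{\beta }(j)$ for all $\alpha \subset \beta $ is impossible as soon as some $\beta \in \mathcal{N}(cov(\mathcal{A}))$ contains $i_{1}\neq i_{2}$ with $\tau (i_{1})=\tau (i_{2})=j$: the singleton simplexes force $\sigma _{\{i_{s}\}}(j)=i_{s}$, so $\sigma _{\beta }(j)$ would have to equal both $i_{1}$ and $i_{2}$. Since the only maps $D_{\beta }\rightarrow D_{j}^{\prime }$ supplied by the data of Definition \ref{def_atlas} are the $\tilde{f}_{i}\circ \mathrm{pr}_{i}$ with $\tau (i)=j$, and these in general agree only on $k_{\beta }(U_{\beta })$ and not on all of $D_{\beta }$, no choice of $\tilde{F}_{\beta }$ satisfies the compatibility of Definition \ref{Def_morph_over_f} with both singletons. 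So a morphism of s.s.embedding triples in the strict sense (ambient maps $\tilde{F}_{\alpha }$ for every $\alpha $, compatible with all projections) does not exist in general, and your first repair --- enlarging the ambient manifolds --- alters the s.s.triple constructed in Part 1 and hence proves a different statement; it is in substance the paper's own device of the product atlas $\mathcal{A}\times _{\tau }\mathcal{B}$ of Lemma \ref{Lemma_complet_morph_atlas}, which produces a roof rather than a direct morphism. What the paper actually uses downstream (Lemma \ref{Lemma_lifting_morph} and property \textbf{2-ss}) is only the morphism $\mathfrak{U}\rightarrow \mathfrak{V}$ of s.s.analytic spaces given by $F_{\alpha }=f|U_{\alpha }$, plus the ambient maps $\tilde{F}_{\alpha }=\prod \tilde{f}_{i}$ for those $\alpha $ on which $\tau $ is injective. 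Your proof is therefore incomplete, but the gap you found is real and is not closed by the paper's own proof of the Lemma, which is silent on Part 2.
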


\begin{proof}
Take $\mathfrak{U}=((U_{\alpha })_{\alpha \in \mathcal{N}(cov(\mathcal{A}%
))},(i_{\alpha \beta })_{\alpha \subset \beta })$ to be the s.s.analytic
space corresponding to the open covering $(U_{i})_{i\in I}$ (see Example \ref%
{ex_acop}),
$\mathfrak{D}=((D_{\alpha })_{\alpha \in \mathcal{N}%
(cov(\mathcal{A}))},(p_{\alpha \beta })_{\alpha \subset \beta })$ the
s.s.complex manifold associated to the family $(D_{i})_{i\in I}$\ (see
Example \ref{ex_prod})\ and $k:\mathfrak{U\rightarrow D}$\ the morphism
deduced from the closed embeddings $k_{i}:U_{i}\rightarrow D_{i}$(one checks
that each $k_{\alpha }:U_{\alpha }\rightarrow D_{\alpha }$ is also a closed
embedding).
\end{proof}

The s.s.embedding triple from Lemma \ref{Lemma_assoc_ss_triples}\ will be
called the s.s.embedding triple associated to $(X,\mathcal{A})$.

\begin{remark}
One checks easily that the correspondence in Lemma \ref%
{Lemma_assoc_ss_triples} gives an equivalence between the category of
locally embedded analytic spaces and a subcategory of the category of
s.s.embedding triples.
\end{remark}

\begin{lemma}
\label{Lemma_lifting_morph}
\begin{enumerate}
\item Let $f:(X,\mathcal{A})\rightarrow (Y,\mathcal{B})$ be a morphism of
locally embedded analytic spaces, $\mathcal{F}\in Mod(\OX{X})$, $%
\mathcal{G}\in Mod(\OX{Y})$ and $u:\mathcal{G}\rightarrow f_{\ast }%
\mathcal{F}$ \ a morphism of $\OX{Y}$-modules. If $F:(\mathfrak{U}%
,k,\mathfrak{D})\rightarrow (\mathfrak{V},k^{\prime },\mathfrak{D}^{\prime
}) $ is the morphism induced between the s.s.systems of embedding triples
associated to $(X,\mathcal{A})$, $(Y,\mathcal{B})$ then $u$\ induces a
natural morphism: 
\begin{equation}
F^{\ast }(u):\mathcal{G}|\mathfrak{V} \rightarrow F_{\ast }(%
\mathcal{F}|\mathfrak{U})  \label{morf_f*_sus}
\end{equation}

\item Let $(X,\mathcal{A})\overset{f}{\rightarrow }(Y,\mathcal{B})\overset{g}%
{\rightarrow }(Z,\mathcal{C})$ be morphisms of locally embedded analytic
spaces and $h=g\circ f$. Let moreover $\mathcal{F} \in Mod(\OX{X})$, 
$\mathcal{G}\in Mod(\OX{Y})$, $\mathcal{H}\in Mod(\OX{Z})$
and morphisms $u:\mathcal{G}\rightarrow f_{\ast }\mathcal{F}$ $\OX{Y} $-linear, $v:\mathcal{H}\rightarrow g_{\ast }\mathcal{G}$, $w:\mathcal{%
H}\rightarrow h_{\ast }\mathcal{F}$ $\OX{Z}$-linear, such that $%
g_{\ast }(u)\circ v=w$ then one has the commutative diagram:
\begin{equation}
\begin{tikzcd}[column sep=0.2cm] 
H_{\ast}(\mathcal{F}|\mathfrak{U}) 
&& G_{\ast}(\mathcal{G}|\mathfrak{V})
\arrow{ll}[swap]{G_{\ast}(F^{\ast}(u))} \\ 
& \mathcal{H}|\mathfrak{W} 
\arrow{ul}{H^{\ast}(w)} \arrow{ur}[swap]{G^{\ast}(v)}
\end{tikzcd}
\label{diagr_3_sus}
\end{equation}
\end{enumerate}
\end{lemma}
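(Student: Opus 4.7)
The plan is to use Lemma~\ref{Lemma_crit_morf_im_dir} to reduce the construction of $F^\ast(u)$ to specifying only its components over pairs $(\gamma,\alpha)$ with $\alpha\in I(\gamma,0)$, and then to obtain everything by restriction of the given $u\colon\mathcal{G}\to f_\ast\mathcal{F}$. Writing $\mathcal{A}=(U_i,k_i,D_i)_{i\in I}$ and $\mathcal{B}=(V_j,k'_j,D'_j)_{j\in J}$, let $\tau\colon I\to J$ be the refinement mapping attached to $f$ (see Definition~\ref{def_atlas}). For $\alpha$ with $\tau|\alpha$ injective and $\tau(\alpha)=\gamma$, the component $F_\alpha$ of $F$ is just $f|U_\alpha\colon U_\alpha\to V_\gamma$, so restricting $u$ to $V_\gamma$ and then pushing forward via $f|U_\alpha$ produces the candidate
$$u_{\gamma\alpha}\colon \mathcal{G}|V_\gamma\longrightarrow (f|U_\alpha)_\ast(\mathcal{F}|U_\alpha)=F_{\alpha\ast}((\mathcal{F}|\mathfrak{U})_\alpha).$$

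The next step is to verify conditions~(\ref{Im_dir_cond_1}) and~(\ref{Im_dir_cond_2}). In the present context the connecting morphisms $\varphi_{\beta\alpha}$ of $\mathcal{F}|\mathfrak{U}$ and $\psi_{\delta\gamma}$ of $\mathcal{G}|\mathfrak{V}$ are nothing but the obvious sheaf restriction maps, so both diagrams express the tautology that the restriction of a sheaf morphism commutes with further restriction: each loop equals the restriction of $u$ to the smallest open and its pushforward to the largest chart. Hence the family $(u_{\gamma\alpha})$ defines a morphism $\mathcal{G}|\mathfrak{V}\to F_\sharp(\mathcal{F}|\mathfrak{U})$; since $\mathcal{G}|\mathfrak{V}$ is concentrated in degree~$0$, the map lands inside $Z^0(F_\sharp(\mathcal{F}|\mathfrak{U}))=F_\ast(\mathcal{F}|\mathfrak{U})$ (cf.\ (\ref{Morph_F*F_sharp})), yielding~(\ref{morf_f*_sus}). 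Naturality in $u$ is immediate since the whole construction is built by restriction.

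For part~2, I would first invoke Lemma~\ref{Lemma_Comp_im_dir} to identify $H_\ast(\mathcal{F}|\mathfrak{U})=G_\ast F_\ast(\mathcal{F}|\mathfrak{U})$, so the three arrows in~(\ref{diagr_3_sus}) live in the same category of $\mathfrak{W}$-modules. By Lemma~\ref{Lemma_crit_morf_im_dir} they are determined by their $(\zeta,\mu)$-components for $\mu\in I(\zeta,0)$ in $\mathfrak{W}$; along the bijection induced by $\tau_g\circ\tau_f$ on $0$-length simplices, the components of both compositions unwind, by the construction in part~1, to the restrictions of the $\OX{Z}$-linear maps $g_\ast(u)\circ v$ and $w$ to the relevant open set. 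These agree by the assumption $g_\ast(u)\circ v=w$, so the diagram commutes.

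The only real obstacle is organizational: one must carefully match the refinement maps $\tau_f\colon I\to J$ and $\tau_g\colon J\to K$, the indexings by simplices $I(\gamma,0)$ and $J(\zeta,0)$, and the resulting $0$-length simplices of $\mathfrak{W}$, and also unwind the fact that the connecting morphisms of $\mathcal{F}|\mathfrak{U}$, $\mathcal{G}|\mathfrak{V}$, $\mathcal{H}|\mathfrak{W}$ are the tautological restriction maps. Once this bookkeeping is in place, both the well-definedness of $F^\ast(u)$ in part~1 and the commutativity of~(\ref{diagr_3_sus}) in part~2 are formal consequences of the naturality of restriction and direct image together with the assumed compatibilities between $u$, $v$, and $w$.
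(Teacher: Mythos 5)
Your proposal is correct and follows essentially the same route as the paper: both parts are reduced via Lemma \ref{Lemma_crit_morf_im_dir} to checking the components $u_{\gamma\alpha}$ obtained by restricting $u$, with the compatibility conditions holding tautologically because the connecting morphisms of $\mathcal{F}|\mathfrak{U}$, $\mathcal{G}|\mathfrak{V}$, $\mathcal{H}|\mathfrak{W}$ are restriction maps, and part 2 follows by comparing the two families of components under the identification $H_{\ast}=G_{\ast}F_{\ast}$ from Lemma \ref{Lemma_Comp_im_dir}. The paper's proof is just a terser version of the same argument.
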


\begin{proof}
\textbf{1. }Let $cov(\mathcal{A})\mathcal{=}(U_{i})_{i\in I}$, $cov(\mathcal{%
B})\mathcal{=}(V_{j})_{j\in J}$. For $\alpha \in \mathcal{N} (cov(\mathcal{A})), 
\gamma \in \mathcal{N}(cov(\mathcal{B}))$ such that $\tau
(\alpha )\subset \gamma $ let%
\begin{equation*}
f_{\gamma \alpha }:U_{\alpha }\rightarrow V_{\gamma }
\end{equation*}%
be the restriction of $f$ and 
\begin{equation*}
\ u_{\gamma \alpha }:\mathcal{G}|V_{\gamma }\rightarrow f_{\gamma \alpha
\ast }(\mathcal{F}|U_{\alpha })
\end{equation*}%
the restriction of $u$. One verifies that the family of morphisms 
$(u_{\gamma \alpha })_{\gamma \alpha }$, where 
\linebreak
$\gamma \in \mathcal{N}(cov(\mathcal{B}))$, $\alpha \in I(\gamma ,0)$, 
satisfies the hypothesis of Lemma \ref{Lemma_crit_morf_im_dir} and consequently 
they determine a morphism $F^{\ast }(u)$.

\textbf{2. }follows also from Lemma \ref{Lemma_crit_morf_im_dir} since one
verifies that travelling both ways along the edges of diagram (\ref{diagr_3_sus})
the two morphisms are determined by the same family of morphisms.
\end{proof}

\begin{definition}
\label{Def_f_compliant}Let $f:X\rightarrow Y$ be a morphism of analytic
spaces and let 
\linebreak 
$\mathcal{A}=(U_{i},k_{i},D_{i})_{i\in I}$ and 
$\mathcal{B}=(V_{j},k_{j}^{\prime },D_{j}^{\prime })_{j\in J}$ be 
embedding atlases of $X$, respectively $Y$. $\mathcal{A}$\ and $\mathcal{B}$ 
are said to be $f$-compliant if $cov(\mathcal{A})\prec f^{-1}(cov(\mathcal{B}))$, 
i.e.\ there exists a refinement mapping $\tau :I\rightarrow J$ such that
$f(U_{i})\subset V_{\tau (i)}$ for all $i\in I.$ Obviously the refinement
mapping $\tau $ need not be unique.
\end{definition}

Note that if $\mathcal{A}_{1}$\ and $\mathcal{A}_{2}$ are embedding atlases
of $X$, to say that $\mathcal{A}_{1}$\ and $\mathcal{A}_{2}$ are $id_{X}$%
-compliant simply means that $cov(\mathcal{A}_{1})\prec cov(\mathcal{A}_{2})$%
.

If $\mathcal{A}$\ and $\mathcal{B}$ are $f$-compliant then, in general,
there does not exist a morphism of locally embedded analytic spaces $F:(X,%
\mathcal{A})\rightarrow (Y,\mathcal{B})$ over $f$. However Lemma \ref%
{Lemma_complet_morph} immediately implies:

\begin{lemma}
\label{Lemma_complet_morph_atlas}

\begin{enumerate}
\item \label{Lemma_complet_morph_atlas_1} In the context of Definition \ref%
{Def_f_compliant} let 
\begin{equation*}
\mathcal{A}\times _{\tau }\mathcal{B}=(U_{i},k_{i\tau (i)},D_{i}\times
D_{\tau (i)}^{\prime })_{i\in I}
\end{equation*}%
Then $\mathcal{A}\times _{\tau }\mathcal{B}$ is an embedding atlas on $X$,
and the family of morphisms of embedding triples 
\begin{equation*}
(U_{i},k_{i},D_{i})\overset{(id,p_{1i})}{\longleftarrow }(U_{i},k_{i\tau
(i)},D_{i}\times D_{\tau (i)}^{\prime })\overset{(f,p_{2i})}{\rightarrow }%
(V_{\tau (i)},k_{\tau (i)}^{\prime },D_{\tau (i)}^{\prime })
\end{equation*}%
where $p_{1i}, p_{2i}$ are the projections, give the diagram of locally
embedded analytic spaces: 
\begin{equation*}
(X,\mathcal{A})\overset{(id,id,p_{1})}{\longleftarrow }(X,\mathcal{A}\times
_{\tau }\mathcal{B})\overset{(f,\tau ,p_{2})}{\rightarrow }(Y,\mathcal{B})
\end{equation*}%
Moreover, assume $\mathcal{A}^{\prime }=(U_{k},k_{k}^{\prime },D_{k}^{\prime
})_{k\in K}$ is another embedding atlas of $X$,\ and there exist morphisms 
\begin{equation*}
(X,\mathcal{A})\overset{(id,\upsilon ,q_{1})}{\longleftarrow }(X,\mathcal{A}%
^{\prime })\overset{(f,\tau \circ \upsilon ,q_{2})}{\rightarrow }(Y,\mathcal{%
B})
\end{equation*}%
Then there is a unique morphism $\alpha =(X,\mathcal{A}^{\prime
})\rightarrow $ $(X,\mathcal{A}\times _{\tau }\mathcal{B})$\ such that the
following diagram commutes:
\begin{equation}
\begin{tikzcd}[column sep=1.5cm] (X,\mathcal{A}) & (X,\mathcal{A}
\times_{\tau} \mathcal{B}) \arrow{l}[swap]{(id,id,p_{1})} \arrow{r}{(f,
\tau, p_{2})} & (Y,\mathcal{B}) \\ & (X,\mathcal{A}^{\prime})
\arrow{ul}{(id,\tau,q_{1})} \arrow{u}{\alpha} \arrow{ur}[swap]{(f,\tau \circ
\upsilon, q_{2})} \end{tikzcd}  \label{Diagr_A_prim}
\end{equation}

\item \label{Lemma_complet_morph_atlas_2} Let $\tau _{1},$ $\tau _{2}$:$%
I\rightarrow J$ be refinement mappings. Then 
\begin{equation*}
\mathcal{A}\times _{\tau _{1}\tau _{2}}\mathcal{B}=(U_{i},k_{i\tau
_{1}(i)\tau _{2}(i)},D_{i}\times D_{\tau _{1}(i)}^{\prime }\times D_{\tau
_{2}(i)}^{\prime })_{i\in I}
\end{equation*}%
is an embedding atlas on $X$ and, for $s=1,2$, one has natural morphisms:%
\begin{equation}
(X,\mathcal{A})\overset{(id,id,p_{1})}{\longleftarrow }(X,\mathcal{A}\times
_{\tau _{1}\tau _{2}}\mathcal{B})\overset{(f,\tau _{s},p_{s})}{\rightarrow }%
(Y,\mathcal{B})  \label{Diagr_tau12}
\end{equation}%
such that the following diagram commutes:
\begin{equation}
\begin{tikzcd}[column sep=1.5cm] 
(X,\mathcal{A}) & (X,\mathcal{A} \times_{\tau_{1} \tau_{2}} \mathcal{B}) 
\arrow{l}[swap]{(id,id,p_{1})} \arrow{d}{\alpha_{s}} 
\arrow{r}{(f,\tau_{s} ,p_{s})} 
& (Y,\mathcal{B}) \\ 
& (X,\mathcal{A} \times_{\tau_{s}} \mathcal{B}) 
\arrow{ul}{(id,id,p_{1})} 
\arrow{ur}[swap]{(f,\tau_{s} ,p_{2})} 
\end{tikzcd}  \label{Diagr_tau_1_2}
\end{equation}
where $\alpha _{s}=(id,id,p_{1s})$
\end{enumerate}
\end{lemma}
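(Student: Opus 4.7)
The plan is to reduce both parts to chart-by-chart applications of Lemma \ref{Lemma_complet_morph}, which provides the analogous universal constructions for a single embedding triple, and then to assemble the chart-level data into morphisms of locally embedded analytic spaces in the sense of Definition \ref{def_atlas}.(2).

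\textbf{Part 1.} For each $i \in I$, I apply part \textbf{1} of Lemma \ref{Lemma_complet_morph} to the triples $(U_i, k_i, D_i)$ and $(V_{\tau(i)}, k'_{\tau(i)}, D'_{\tau(i)})$ together with the restriction $f|U_i: U_i \to V_{\tau(i)}$, which is well-defined by $f$-compliance. This produces the closed embedding $k_{i\tau(i)} = (k_i, k'_{\tau(i)} \circ f|U_i): U_i \hookrightarrow D_i \times D'_{\tau(i)}$, so $\mathcal{A} \times_\tau \mathcal{B}$ is indeed an embedding atlas whose underlying covering is $cov(\mathcal{A})$. The same lemma supplies the chart-level morphisms of embedding triples $(id, p_{1i})$ and $(f|U_i, p_{2i})$; assembling these with refinement maps $id_I$ and $\tau$ respectively yields the two required morphisms of locally embedded analytic spaces in the diagram of Part 1. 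For the universal property, given $\mathcal{A}' = (U_k, k'_k, D'_k)_{k \in K}$ with refinement $\upsilon: K \to I$ and the given morphisms, I apply the uniqueness clause of Lemma \ref{Lemma_complet_morph}.\textbf{1} separately for each $k \in K$ to the triple $(U_k, k'_k, D'_k)$ with the maps $q_{1k}, q_{2k}$. This produces, for every $k$, a unique chart-level morphism $\alpha_k$ into $(U_{\upsilon(k)}, k_{\upsilon(k)\tau(\upsilon(k))}, D_{\upsilon(k)} \times D'_{\tau(\upsilon(k))})$; the family $(\alpha_k)_{k \in K}$ equipped with refinement $\upsilon$ assembles into $\alpha$, and componentwise uniqueness delivers global uniqueness.

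\textbf{Part 2.} The same strategy applies, using part \textbf{2} of Lemma \ref{Lemma_complet_morph} chart-by-chart with the triples $(V_{\tau_s(i)}, k'_{\tau_s(i)}, D'_{\tau_s(i)})$ for $s=1,2$. This produces the closed embedding $k_{i\tau_1(i)\tau_2(i)}$ into $D_i \times D'_{\tau_1(i)} \times D'_{\tau_2(i)}$ together with, for each $s$, unique morphisms $\alpha_{s,i}$ satisfying the chart-level version of (\ref{Diagr_tau_1_2}). Gluing them over $i \in I$ with refinement $id_I$ yields the morphisms $\alpha_s = (id, id, p_{1s})$ and the desired commutative diagram.

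\textbf{Main obstacle.} There is no substantive obstacle: the statement is a routine globalization of Lemma \ref{Lemma_complet_morph}. The only point needing a moment's attention is checking that the chart-wise constructions are compatible with the specified refinement maps required by Definition \ref{def_atlas}.(2); this is automatic since the refinement maps ($id_I$, $\tau$, $\upsilon$, $\tau_s$) are chosen to match exactly the chart-level data produced by Lemma \ref{Lemma_complet_morph}. The main value of the lemma is thus to fix the notation $\mathcal{A} \times_\tau \mathcal{B}$ and $\mathcal{A} \times_{\tau_1 \tau_2} \mathcal{B}$ for use in the subsequent globalization arguments.
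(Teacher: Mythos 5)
Your proposal is correct and matches the paper's intent exactly: the paper offers no separate proof, simply noting that the lemma "immediately" follows from Lemma \ref{Lemma_complet_morph}, i.e.\ by applying that lemma chart by chart and assembling the resulting morphisms of embedding triples with the indicated refinement mappings, which is precisely your argument.
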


\begin{corollary}
\label{Corol_Atlas_compunere}Let $X\overset{f}{\rightarrow }Y\overset{g}{%
\rightarrow }Z$ be morphisms of analytic spaces, $h=g\circ f$, and let $%
\mathcal{A}=(U_{i},k_{i},D_{i})_{i\in I}$, $\mathcal{B}=(V_{j},k_{j}^{\prime
},D_{j}^{\prime })_{j\in J}$, $\mathcal{C}=(W_{k},k_{k}^{\prime \prime},D_{k}^{\prime \prime})_{k\in
K}$ be embedding atlases of $X$, respectively $Y$, and $Z$. Assume that $%
\mathcal{A}$, $\mathcal{B}$ are $f$-compliant, $\mathcal{B}$, $\mathcal{C}$
are $g$-compliant and let $\tau :I\rightarrow J$, $\upsilon :J\rightarrow K$
be refinement mappings. Set 
\begin{equation}
\mathcal{A}\times _{\tau }\mathcal{B}\times _{\upsilon }\mathcal{C}%
=(U_{i},k_{i\tau (i)\upsilon (i)},D_{i}\times D_{\tau (i)}^{\prime }\times
D_{\upsilon (i)}^{\prime \prime})_{i\in I}
\end{equation}%
Then $\mathcal{A}\times _{\tau }\mathcal{B}\times _{\upsilon }\mathcal{C}$
is an embedding atlas on $X$, and the following diagram commutes:%
\begin{equation}
\begin{tikzcd}[column sep=0cm, row sep=scriptsize] && (X,\mathcal{A}
\times_{\tau} \mathcal{B} \times_{\upsilon} \mathcal{C})
\arrow{dl}[swap]{(id,id, p_{12})} \arrow{dr}{(f,\tau , p_{23})} \arrow[ddll,
bend right, "{(id,id,p_{1})}"'] \arrow[ddrr, bend left, "{(g \circ f,
\upsilon \circ \tau, p_{3})}"] && \\ & (X,\mathcal{A} \times_{\tau}
\mathcal{B}) \arrow{dl}[swap]{(id,id, p_{1})} \arrow{dr}{(f,\tau , p_{2})}
&& (Y,\mathcal{B} \times_{\upsilon} \mathcal{C})
\arrow{dl}[swap]{(id,id,p_{1})} \arrow{dr}{(g,\upsilon ,p_{2})} & \\
(X,\mathcal{A}) && (Y,\mathcal{B}) && (Z,\mathcal{C}) \end{tikzcd}
\label{diagr_morph_compoz}
\end{equation}%
where $p_{12},p_{23},p_{1},p_{2},p_{3}$ are the obvious projections.
\end{corollary}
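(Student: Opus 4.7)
The plan is to reduce everything to iterated applications of Lemma \ref{Lemma_complet_morph_atlas}.\ref{Lemma_complet_morph_atlas_1}, exploiting the fact that all the new atlases in question are built from categorical products of complex manifolds, so everything reduces to the universal property of projections.

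First, I would verify that $\mathcal{A}\times_{\tau}\mathcal{B}\times_{\upsilon}\mathcal{C}$ is a genuine embedding atlas. Its underlying open covering is $cov(\mathcal{A})=(U_{i})_{i\in I}$, so I only have to check that for each $i\in I$ the map
\[
k_{i\tau(i)\upsilon(i)}=(k_{i},\,k'_{\tau(i)}\circ f|_{U_{i}},\,k''_{\upsilon(\tau(i))}\circ g\circ f|_{U_{i}}):U_{i}\longrightarrow D_{i}\times D'_{\tau(i)}\times D''_{\upsilon(\tau(i))}
\]
is a closed embedding. Because $k_{i}:U_{i}\to D_{i}$ is already a closed embedding and the first projection retracts $k_{i\tau(i)\upsilon(i)}$ onto $k_{i}$, this is the standard ``graph of a closed embedding'' argument, identical to the one used in Lemma \ref{Lemma_complet_morph}.

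Next, I would obtain the morphisms of locally embedded analytic spaces appearing in (\ref{diagr_morph_compoz}) by applying Lemma \ref{Lemma_complet_morph_atlas}.\ref{Lemma_complet_morph_atlas_1} in two different groupings. Applied to $f:X\to Y$ with the target atlas $\mathcal{B}\times_{\upsilon}\mathcal{C}$ and refinement $\tau$, it produces $\mathcal{A}\times_{\tau}(\mathcal{B}\times_{\upsilon}\mathcal{C})=\mathcal{A}\times_{\tau}\mathcal{B}\times_{\upsilon}\mathcal{C}$ together with the morphisms $(id,id,p_{1})$ to $(X,\mathcal{A})$ and $(f,\tau,p_{23})$ to $(Y,\mathcal{B}\times_{\upsilon}\mathcal{C})$. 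Applied instead to $h=g\circ f:X\to Z$ with source atlas $\mathcal{A}\times_{\tau}\mathcal{B}$ and refinement $\upsilon\circ\tau$, the same lemma produces $(\mathcal{A}\times_{\tau}\mathcal{B})\times_{\upsilon\circ\tau}\mathcal{C}$, which is again the same atlas $\mathcal{A}\times_{\tau}\mathcal{B}\times_{\upsilon}\mathcal{C}$, equipped with the morphisms $(id,id,p_{12})$ to $(X,\mathcal{A}\times_{\tau}\mathcal{B})$ and $(h,\upsilon\circ\tau,p_{3})$ to $(Z,\mathcal{C})$. Together with Lemma \ref{Lemma_complet_morph_atlas}.\ref{Lemma_complet_morph_atlas_1} applied to $f$ and to $g$ separately (which yield the lower two triangles $(X,\mathcal{A})\leftarrow(X,\mathcal{A}\times_{\tau}\mathcal{B})\to(Y,\mathcal{B})$ and $(Y,\mathcal{B})\leftarrow(Y,\mathcal{B}\times_{\upsilon}\mathcal{C})\to(Z,\mathcal{C})$), this gives all six arrows in (\ref{diagr_morph_compoz}).

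Finally, I would check commutativity chart by chart. On the $i$-th chart every arrow is, by construction, a product of a morphism between analytic spaces (one of $id_{X}$, $f|_{U_{i}}$, or $g\circ f|_{U_{i}}$) and a projection from a sub-triple of $D_{i}\times D'_{\tau(i)}\times D''_{\upsilon(\tau(i))}$ onto a smaller sub-triple. Each triangle and each bent outer arc of (\ref{diagr_morph_compoz}) therefore reduces to an obvious compatibility between projections of the triple product $D_{i}\times D'_{\tau(i)}\times D''_{\upsilon(\tau(i))}$ and its partial products, which holds by the universal property of the Cartesian product (this is also a direct consequence of the uniqueness clause for $\alpha$ in Lemma \ref{Lemma_complet_morph_atlas}.\ref{Lemma_complet_morph_atlas_1}).

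The main obstacle is not mathematical but notational: one has to keep track of three indexing sets $I,J,K$, the refinements $\tau,\upsilon$, and the various projections $p_{1},p_{2},p_{3},p_{12},p_{23}$, and ensure that each equality is read on the correct factor of the product. Once the diagram is parsed chart by chart there is genuinely nothing to prove beyond the universal property of products of complex manifolds.
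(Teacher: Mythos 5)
Your proposal is correct and follows exactly the route the paper intends: the paper offers no written proof, presenting the corollary as an immediate consequence of Lemma \ref{Lemma_complet_morph_atlas} (itself derived from Lemma \ref{Lemma_complet_morph}), and your chart-by-chart reduction to associativity of the Cartesian product and compatibility of the projections $p_{1},p_{2},p_{3},p_{12},p_{23}$ is precisely the omitted verification. Your reading of the index $\upsilon(\tau(i))$ where the statement abbreviates it as $\upsilon(i)$ is also the correct interpretation.
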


\begin{remark}
\label{Rem_comut_morph_compoz}By Lemma \ref{Lemma_complet_morph_atlas}. \ref%
{Lemma_complet_morph_atlas_1} there is a unique morphism 
\begin{equation*}
\alpha :(X,\mathcal{A}\times _{\tau }\mathcal{B}\times _{\upsilon }\mathcal{C%
})\rightarrow (X,\mathcal{A}\times _{\upsilon }\mathcal{C})
\end{equation*}%
\ such that the following diagram commutes:%
\begin{equation}
\begin{tikzcd}[column sep=1.5cm] (X,\mathcal{A}) & (X,\mathcal{A}
\times_{\upsilon \circ \tau} \mathcal{C}) \arrow{l}[swap]{(id,id,p_{1})}
\arrow{r}{(g \circ f,\upsilon \circ \tau, p_{2})} & (Z,\mathcal{C}) \\ &
(X,\mathcal{A} \times_{\tau} \mathcal{B} \times_{\upsilon} \mathcal{C})
\arrow{ul}{(id,id,p_{1})} \arrow{u}{\alpha} \arrow{ur}[swap]{(g \circ
f,\upsilon \circ \tau, p_{3})} \end{tikzcd}  \label{Diagr_comut_morph_compoz}
\end{equation}
\end{remark}

\begin{remark}
\label{Rem_morph_atlas_gen}Let $(X,\mathcal{A})$, $(Y,\mathcal{B})$ be
locally embedded analytic spaces,\linebreak\ $\mathcal{A}%
=(U_{i},k_{i},D_{i})_{i\in I}$ and $\mathcal{B}=(V_{j},k_{j}^{\prime
},D_{j}^{\prime })_{j\in J}$, and $f:X\rightarrow Y$ a morphism of analytic
spaces. If $\mathcal{A}_{1}$ is an embedding atlas of $X$ over the open
covering $(U_{i}\cap f^{-1}(V_{j}))_{i,j}$ then $\mathcal{A}_{1}$, $\mathcal{%
A}$ are $id_{X}$-compliant and $\mathcal{A}_{1}$, $\mathcal{B}$ are $f$%
-compliant. Moreover, if $\mathcal{A}_{2}$ is another embedding atlas of $X$
s.t. $\mathcal{A}_{2}$, $\mathcal{A}$ are $id_{X}$-compliant and $\mathcal{A}%
_{2}$, $\mathcal{B}$ are $f$-compliant then $\mathcal{A}_{2}$, $\mathcal{A}%
_{1}$ are $id_{X}$-compliant.
\end{remark}

\section{Construction of the Dolbeault resolution\label{Sect_Resol}}

We shall extend successively the definition of the
Dolbeault-Grothendieck resolution from the classical case of complex
manifolds to that of embedding triples, then to s.s.of embedding triples
and, finally, to the case of a general analytic space with a fixed embedding
atlas. It is essential that at each extension the definition be compatible
with $\sharp$-direct images (i.e. there exists a commutative diagram similar to
diagram (\ref{diagr_3_A})). For reference purposes property \textbf{1.a}, for
instance, will be called \textbf{1.a-mfld} in the smooth case, \textbf{%
1.a-emb} in the embedded case, and \textbf{1.a-ss} in the semi-simplicial
case.

\refstepcounter{subsection}\textbf{\ \arabic{section}.\arabic{subsection}} 
\textbf{The smooth case. }Let $X$ be an $n$-dimensional complex manifold \
We regard $X$ as a locally embedded analytic space with one chart given by
the identity map. For $\mathcal{F}\in Mod(\OX{X})$\ denote by $\Dolbm{X}{F}$ the complex:%
\begin{equation}
0\longrightarrow \mathcal{E}_{X}^{0,0}\otimes _{\OX{X}}\mathcal{F}%
\longrightarrow ...\longrightarrow \mathcal{E}_{X}^{0,n}\otimes _{\OX{X}} \mathcal{F}\longrightarrow 0  \label{Compl_Dolb_F}
\end{equation}%
obtained by applying the functor $\bullet \otimes _{\OX{X}}\mathcal{F}$ to the resolution (\ref{Rezol_Dolb}) with the term containing $\mathcal{E%
}_{X}^{0,0}$ considered in degree $0$. This complex appeared first in the
proof of the duality theorems of Serre-Malgrange.

Note that $\Dolb{X}{\bullet }$ is a functor $Mod(\OX{X})\rightarrow C^{b}(X)$. We check that this functor satisfies the properties of the Theorem.

\textbf{1.d-mfld }is proved in Malgrange \cite{M}. \textbf{1.a-mfld} and 
\textbf{1.b-mfld} follow immediately from \textbf{1.d-mfld}, and \textbf{%
1.c-mfld} is well known. In \textbf{1.e-mfld }the morphism 
\begin{equation}
\Dolb{X}{\OX{X}}\otimes _{\OX{X}}\mathcal{F}\rightarrow \Dolbm{X}{F}  \label{ident_Dolb_X}
\end{equation}%
is obviously an isomorphism for any $\mathcal{F}$.

For \textbf{2-mfld }let $f:X\rightarrow Y$ be a morphism of complex
manifolds. Let%
\begin{equation}
f^{\ast }:\Dolb{Y}{\OX{Y}}\rightarrow f_{\ast }\Dolb{X}{\OX{X}}   \label{Dolb_X_f}
\end{equation}%
be the morphism given by the pullback of differential forms - it is a
morphism over the mapping $f^{\ast }:\OX{Y} \rightarrow f_{\ast }\OX{X}$. For $\mathcal{F}\in Mod(\OX{X})$, combining (\ref{ident_Dolb_X}) and (\ref{Dolb_X_f}), one gets a functorial morphism 
\begin{equation}
\Dolb{Y}{f_{\ast}\mathcal{F}}\rightarrow f_{\ast}\Dolbm{X}{F}
\label{Dolb_f_F}
\end{equation}%
and moreover the following diagram commutes:%
\begin{equation}
\begin{tikzcd}[column sep=-0.2cm] 
\Dolb{Y}{f_{\ast}\mathcal{F}}
\arrow{rr} && f_{\ast}\Dolbm{X}{F} \\ & f_{\ast
}\mathcal{F} \arrow{ul} \arrow{ur} \end{tikzcd}  \label{Diagr_Dolb_f_F}
\end{equation}

Let moreover $\mathcal{G}\in Mod(\OX{Y})$ and $u:\mathcal{%
G}\rightarrow f_{\ast}\mathcal{F}$ a morphism of $\OX{Y}$%
-modules Then the morphism (\ref{Dolb_f_F}) together with $u$ induces the
morphism of complexes: 
\begin{equation}
f^{\ast}(u):\Dolbm{Y}{G}\rightarrow f_{\ast}\Dolbm{X}{F}
\end{equation}%
and combining (\ref{Diagr_Dolb_f_F}) with \textbf{1.b-mfld} one checks that
the diagram (\ref{diagr_f*_u}) commutes in the smooth case.

\textbf{3-mfld }One starts from the obvious commutative diagram:
\begin{equation}
\begin{tikzcd}[column sep=-0.2cm] h_{\ast}\Dolb{X}{\OX{X}}
&& g_{\ast}\Dolb{Y}{\OX{Y}}
\arrow{ll}[swap]{g_{\ast}(f^{\ast})} \\ & \Dolb{Z}{\OX{Z}}
\arrow{ul}{h^{\ast}} \arrow{ur}[swap]{g^{\ast}} \end{tikzcd}
\label{Diagr_3_Dolb}
\end{equation}
Using isomorphism (\ref{ident_Dolb_X}) and the above diagram one checks the
case $\mathcal{G}=f_{\ast }\mathcal{F}$, $\mathcal{H}=h_{\ast }\mathcal{F}
$, and each of $u,v,w$ \ equals the respective identity. For the general
case one uses the functorial morphism (\ref{Dolb_f_F}).

Finally we consider the case of open and closed embeddings of manifolds.

\begin{remark}
\label{Ex_open_emb} Let $i:X\hookrightarrow Y$ be an open embedding of
complex manifolds, 
\linebreak
$\mathcal{F}\in Mod(\OX{Y})$, and denote by $u:%
\mathcal{F}\rightarrow i_{\ast }i^{-1}(\mathcal{F})$ the canonical adjunction 
morphism. Then obviously $\Dolb{X}{i^{-1}\mathcal{F}}\simeq i^{-1}\Dolbm{Y}{F}$ 
and, hence, $i^{\ast }(u)$ coincides with the adjunction morphism 
$\Dolbm{Y}{F}\rightarrow i_{\ast }i^{-1}\Dolbm{Y}{F}$.
\end{remark}

\begin{proposition}
\label{Prop_qiso_manif} Let $i:X\hookrightarrow Y$ be a closed embedding of
manifolds and $\mathcal{F}\in Mod(\OX{X})$ Then the natural morphism 
\begin{equation}
i^{\ast }(id):\Dolb{Y}{i_{\ast }\mathcal{F}}\rightarrow i_{\ast } \Dolbm{X}{F}  \label{i*_closed_emb}
\end{equation}%
is a quasiisomorphism.
\end{proposition}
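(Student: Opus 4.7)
The plan is to show that both sides of $i^{\ast}(id)$ are augmented resolutions of $i_{\ast}\mathcal{F}$, and that the two augmentations are intertwined by $i^{\ast}(id)$; once this is in place, the quasi-isomorphism statement is formal.

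First I would establish that $\Dolb{Y}{i_{\ast}\mathcal{F}} = \mathcal{E}_{Y}^{0,\bullet}\otimes_{\OX{Y}} i_{\ast}\mathcal{F}$ is a resolution of $i_{\ast}\mathcal{F}$. This is precisely observation \textbf{2} from the introduction: the augmented complex
\[
0 \longrightarrow \OX{Y} \longrightarrow \mathcal{E}_{Y}^{0,0} \longrightarrow \cdots \longrightarrow \mathcal{E}_{Y}^{0,n} \longrightarrow 0
\]
is exact, and each term $\mathcal{E}_{Y}^{0,q}$ is $\OX{Y}$-flat by Malgrange \cite{M} (property \textbf{1.d-mfld}). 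Tensoring over $\OX{Y}$ with $i_{\ast}\mathcal{F}$ therefore preserves exactness, yielding a natural augmentation $i_{\ast}\mathcal{F} \hookrightarrow \Dolb{Y}{i_{\ast}\mathcal{F}}$.

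Next I would observe that $i_{\ast}\Dolbm{X}{F}$ is likewise a resolution of $i_{\ast}\mathcal{F}$. By \textbf{1.b-mfld} already proved on $X$, the complex $\Dolbm{X}{F}$ is a resolution of $\mathcal{F}$; since $i$ is a closed embedding, $i_{\ast}$ is an exact functor on sheaves of abelian groups, so $i_{\ast}\Dolbm{X}{F}$ carries the augmentation $i_{\ast}\mathcal{F} \hookrightarrow i_{\ast}\Dolbm{X}{F}$ obtained by applying $i_{\ast}$ to $\mathcal{F} \hookrightarrow \Dolbm{X}{F}$.

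Finally, I would apply the already-verified smooth case of diagram (\ref{diagr_f*_u}) with $f = i$, $\mathcal{G} = i_{\ast}\mathcal{F}$ and $u = id$ to conclude that $i^{\ast}(id)$ is compatible with the two augmentations above. Since both complexes then have cohomology concentrated in degree $0$ (equal to $i_{\ast}\mathcal{F}$) and the augmentation compatibility forces the induced map on $H^{0}$ to be the identity, $i^{\ast}(id)$ is a quasi-isomorphism. The only genuine analytic input in the whole argument is Malgrange's flatness invoked in the first step; the remainder is standard homological formalism combined with results already established in the smooth case.
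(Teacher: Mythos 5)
Your argument is correct and is exactly the paper's proof, merely written out in full: the paper disposes of the proposition in one line by observing that $i^{\ast}(id)$ is a morphism between two (soft) resolutions of $i_{\ast}\mathcal{F}$, which is precisely your combination of Malgrange's flatness (\textbf{1.d-mfld}), exactness of $i_{\ast}$ for a closed embedding, and the augmentation compatibility from diagram (\ref{diagr_f*_u}) in the smooth case. No discrepancy to report.
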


\begin{proof}
$i^{\ast }(id)$ is a morphism between two soft resolutions of $i_{\ast }%
\mathcal{F}$.\medskip
\end{proof}

\refstepcounter{subsection} \textbf{\arabic{section}.\arabic{subsection}} %
\label{paragr_emb_case}\textbf{The embedded case. }Let $(X,i,D)$ be an
embedding triple. We regard $(X,i,D)$ as a locally embedded analytic space
with one chart. Let $\mathcal{F}\in Mod(\OX{X})$. We use the
notation $\Dolbm{i\,}{F}$ for the restriction to $X$ of the
complex $\Dolb{D}{i_{\ast }\mathcal{F}}$: 
\begin{equation}
0\longrightarrow \mathcal{E}_{D}^{0,0}\otimes _{\OX{D}}i_{\ast }%
\mathcal{F}\longrightarrow ...\longrightarrow \mathcal{E}_{D}^{0,n}\otimes _{\OX{D}}i_{\ast }\mathcal{F}\longrightarrow 0  \label{Compl_Dolb_i}
\end{equation}%
Remark that each term of $\Dolb{D}{i_{\ast }\mathcal{F}}$, is null
outside $X$ and, furthermore, has a structure of $i_{\ast }\OX{X}$%
-module deduced via the natural morphism $\OX{D}\rightarrow i_{\ast}
\OX{X}$. If $\mathcal{I}_{X}\subset \OX{D}$ is the ideal that defines $X$ 
as an analytic subspace of $D$ (i.e. $i_{\ast }\OX{X}\thickapprox \OX{D}/\mathcal{I}_{X}$) then it is immediate to see that $\Dolb{i\,}{\OX{X}}$ is isomorphic with the restriction to $X$ of the complex 
\begin{equation}
0\longrightarrow \mathcal{E}_{D}^{0,0}/\mathcal{I}_{X}\mathcal{E}%
_{D}^{0,0}\longrightarrow ...\longrightarrow \mathcal{E}_{D}^{0,n}/\mathcal{I%
}_{X}\mathcal{E}_{D}^{0,n}\longrightarrow 0  \label{Compl_Dolb_i1}
\end{equation}%
and, moreover, $\Dolbm{i\,}{F}$ is isomorphic with the restriction to $X$ of the complex: 
\begin{equation}
0\longrightarrow \mathcal{E}_{D}^{0,0}/\mathcal{I}_{X}\mathcal{E}%
_{D}^{0,0}\otimes _{\OX{D}}i_{\ast }\mathcal{F}\longrightarrow
...\longrightarrow \mathcal{E}_{D}^{0,n}/\mathcal{I}_{X}\mathcal{E}%
_{D}^{0,n}\otimes _{\OX{D}}i_{\ast }\mathcal{F}\longrightarrow 0
\end{equation}

Thus $\Dolb{i\,}{\bullet}$ is a functor $Mod(\OX{X})\rightarrow C^{b}(X)$ and 
\begin{equation}
\Dolbm{i\,}{F}=i^{-1}\Dolb{D}{i_{\ast }\mathcal{F}} 
\label{Def_Dolb_i}
\end{equation}%
Moreover, applying $i_{\ast }$\ one obtains an isomorphism on $D$: 
\begin{equation}
\Dolb{D}{i_{\ast }\mathcal{F}}\approx i_{\ast }\Dolbm{i\,}{F}  \label{i*_Def_Dolb_i}
\end{equation}

The functor $\Dolb{i\,}{\bullet}$ coincides with $\Dolb{X}{\bullet}$ 
if $X$ is a complex manifold (recall that we identify $X$ with the embedding triple $(X,id,X)$).

The properties of $\Dolb{i\,}{\bullet}$ are obtained from those of $\Dolb{D}{\bullet}$ via $i^{-1}$ and $i_{\ast}$
(see (\ref{Def_Dolb_i}) and (\ref{i*_Def_Dolb_i})). Indeed, since $i^{-1}$ 
is an exact functor the statements \textbf{1.a-emb}, through \textbf{1.d-emb}
for $\Dolb{i\,}{\bullet}$ follow immediately from the respective statements in the smooth case. Moreover, for any $\mathcal{F} \in Mod(\OX{X})$ the isomorphism (\ref{ident_Dolb_X}) implies that one has an isomorphism 
\begin{equation}
\Dolb{i\,}{\OX{X}}\otimes _{\OX{X}} \mathcal{F}\rightarrow 
\Dolbm{i\,}{F}  \label{ident_Dolb_i}
\end{equation}

\qquad \textbf{2-emb} Let \ $f=(f,\tilde{f}):(X,i_{1},D_{1})\rightarrow
(Y,i_{2},D_{2})$ be a morphism of embedding triples. On $D_{2}$ one has the
sequence of morphisms:%
\begin{equation}
\Dolb{D_{2}}{i_{2\ast}\OX{Y}}\rightarrow \Dolb{D_{2}}{\tilde{f}_{\ast }i_{1\ast }\OX{X}} \rightarrow  \tilde{f}_{\ast }\Dolb{D_{1}}{i_{1\ast}\OX{X}}
\end{equation}

By definition $f^{\ast }$ is the morphism obtained by restricting to $X$ the
composition of morphisms above (which comes down to applying $i_{2}^{-1}$):%
\begin{equation}
f^{\ast }:\Dolb{i_{2}}{\OX{Y}} \rightarrow f_{\ast}\Dolb{i_{1}}{\OX{X}}  \label{Dolb_f_i}
\end{equation}

Combining (\ref{ident_Dolb_i}) and (\ref{Dolb_f_i}), one gets for $\mathcal{F%
}\in Mod(\OX{X})$ a functorial morphism 
\begin{equation}
\Dolb{i_{2}}{f_{\ast }\mathcal{F}}\rightarrow f_{\ast }\Dolbm{i_{1}\,}{F}  \label{Dolb_f*_i}
\end{equation}

Let moreover $\mathcal{G}\in Mod(\OX{Y})$ and $u:\mathcal{%
G}\rightarrow f_{\ast }\mathcal{F}$ a morphism of $\OX{Y}$%
-modules Then $f^{\ast }(u)$ is by definition the composition of the above
morphism with the morphism induced by $u$: 
\begin{equation}
f^{\ast}(u):\Dolbm{i_{2}\,}{G} \rightarrow f_{\ast} \Dolbm{i_{1}\,}{F}
\end{equation}%
Finally, diagram (\ref{diagr_f*_u}) on $Y$ commutes iff its extension by $0$
to $D_{2}$ commutes, and this is true by \textbf{2-mfld}.

\begin{example}
The embedding morphism $i:X\hookrightarrow D$ can be seen as the morphism of
embedding triples $(i,id_{D})$ and $i^{\ast }$ is the natural quotient
morphism 
\begin{equation}
\Dolb{D}{\OX{D}}\rightarrow i_{\ast }\Dolb{i\,}{\OX{X}}
\end{equation}
\end{example}

For two embedings of the same analytic space one checks easily the following
result:

\begin{remark}
\label{Rem_qizo_id}Let $f=(id,\tilde{f}):(X,i_{1},D_{1})\rightarrow
(X,i_{2},D_{2})$ be a morphism of embedding triples over the same analytic
space. If $\mathcal{F}\in Mod(\OX{X})$ then one checks easily that
the natural morphism $f^{\ast}(id):\Dolbm{i_{2}\,}{F}\rightarrow \Dolbm{i_{1}\,}{F}$ is a quasi-isomorphism.
\end{remark}

\begin{remark}
In the above setting assume we only have a morphism of analytic spaces 
\linebreak
$f:X\rightarrow Y$ instead of a morphism\ of embedding triples 
$(f,\tilde{f}):(X,i_{1},D_{1})\rightarrow (Y,i_{2},D_{2})$. Then there 
may not exist a "direct" morphism
$f^{\ast}(u):\Dolbm{i_{2}\,}{G}\rightarrow f_{\ast}\Dolbm{i_{1}\,}{F}$. 
However by Lemma \ref{Lemma_complet_morph} we have the sequence of mappings:
\begin{equation}
(X,i_{1},D_{1})\overset{(id,p_{1})}{\longleftarrow }(X,i_{12},D_{1}\times
D_{2})\overset{(f,p_{2})}{\longrightarrow }(Y,i_{2},D_{2})
\end{equation}%
and consequently the sequence of morphisms on $Y$:%
\begin{equation}
f_{\ast}\Dolbm{i_{1}\,}{F}\overset{f^{\ast}(id)}{\longrightarrow }
f_{\ast}\Dolbm{i_{12}\,}{F}\overset{(f^{\ast}(u))}{\longleftarrow}
\Dolbm{i_{2}\,}{G}
\label{morph_phi*_deriv}
\end{equation}%
Since the components of $\Dolb{\bullet \,}{\mathcal{F}}$ are soft
sheaves, Remark \ref{Rem_qizo_id} implies that the first morphism above is a
quasi-isomorphism and hence one has a morphism
\linebreak 
$\Dolbm{i_{2}\,}{G}\rightarrow f_{\ast}\Dolbm{i_{1}\,}{F}$ in the derived category 
$D^{b}(\OX{Y})$, that we also denote by $f^{\ast}(u)$. Moreover, if $(X,i_{3},D_{3})$ 
is another embedding triple with mappings:
\begin{equation}
(X,i_{1},D_{1})\overset{(id,q_{1})}{\longleftarrow }(X,i_{3},D_{3})\overset{%
(f,q_{2})}{\longrightarrow }(Y,i_{2},D_{2})
\end{equation}%
then Lemma \ref{Lemma_complet_morph}\ also implies that the morphism in the
derived category $D^{b}(\OX{Y})$: 
\begin{equation}
f_{\ast}\Dolbm{i_{1}\,}{F}\overset{f^{\ast }(id)}{%
\longrightarrow }f_{\ast}\Dolbm{i_{3}\,}{F}\overset{(f^{\ast
}(u))}{\longleftarrow }\Dolbm{i_{2}\,}{G}
\end{equation}%
coincides with the morphism (\ref{morph_phi*_deriv}).
\end{remark}

To prove \textbf{3-emb }in the embedded case extend the sheaves $\mathcal{F}$%
, $\mathcal{G}$, $\mathcal{H}$ with $0$ to $D_{1}$, $D_{2}$, respectively $%
D_{3}$. It is easy to see that $i_{1\ast }\mathcal{F}$, $i_{2\ast }\mathcal{G%
}$, $i_{3\ast }\mathcal{H}$ satisfy the hypothesis of \textbf{3-mfld}. Thus
we get the commutative diagram on $D_{3}$:

\begin{equation}
\begin{tikzcd}[column sep=-0.2cm]
\tilde{h}_{\ast}\Dolb{D_{1}}{i_{1\ast}\mathcal{F}} &&
\tilde{g}_{\ast}\Dolb{D_{2}}{i_{2\ast}\mathcal{G}} \arrow{ll} \\ &
\Dolb{D_{3}}{i_{3\ast}\mathcal{H}} \arrow{ul} \arrow{ur} \end{tikzcd}
\label{diagr_3_i_sus}
\end{equation}
Restricting the above diagram to $Z$ (i.e. applying $i_{3}^{-1}$) one gets
the result.\smallskip \smallskip

\refstepcounter{subsection} \textbf{\arabic{section}.\arabic{subsection}} 
\textbf{The semi-simplicial case. }We extend the functor $\Dolbn$ to
the semi-simplicial context for technical reasons. While a s.s.analytic
space is not a particular case of analytic space with an embedding atlas,
most of the properties in Theorem \ref{Theor_Dolb} apply.

Let $\mathscr{X}=((X_{\alpha })_{\alpha \in \mathcal{S}}$, $(\rho _{\alpha
\beta })_{\alpha \subset \beta })$ be a s.s.complex manifold indexed by the
simplicial complex $(I, \mathcal{S})$, and let $\mathcal{F}\in Mod(\mathscr{X})$,  
$\mathcal{F}=((\mathcal{F}_{\alpha })_{\alpha \in \mathcal{S%
}}\ ,(\varphi _{\beta \alpha })_{\alpha \subset \beta })$. We set: 
\begin{equation}
\Dolbss{X}{F}=(\Dolb{X_{\alpha }}{\mathcal{F}_{\alpha }}_{\alpha \in \mathcal{S}}
\ ,(\rho _{\alpha \beta }^{\ast }(\varphi _{\beta \alpha }))_{\alpha \subset \beta })
\end{equation}%
By property \textbf{3-mfld} $\Dolbss{X}{F}$ is a
complex of $\mathscr{X}$-modules; furthermore, it is null in degrees $<0$

Note that if $\mathscr{X}$ is a s.s.complex manifold relative to the
simplicial complex $K(pt)$ (i.e. a complex manifold - see Example \ref%
{anal_sp_Kpt})\ the functor $\Dolb{\mathscr{X}\,}{\bullet}$
 coincides with the functor $\Dolbn$ from the smooth case.

Let now $(\mathscr{X},k,\mathscr{D})$ be a s.s.embedding triple indexed by
the simplicial complex $(I, \mathcal{S})$ (see Remark \ref{Rem_ss_embed_triple}), 
where $\mathscr{X}=(X_{\alpha })_{\alpha \in \mathcal{S}}$ is a s.s.analytic space, 
$\mathscr{D}=(D_{\alpha })_{\alpha \in \mathcal{S}}$ is a s.s.complex manifold, 
and each component $k_{\alpha}:X_{\alpha }\rightarrow D_{\alpha }$ of the morphism 
$k:$ $\mathscr{X} \rightarrow \mathscr{D}$ is a closed embedding.

If $\mathcal{F}\in Mod(\mathscr{X})$, $\mathcal{F}=(\mathcal{F}_{\alpha
})_{\alpha \in \mathcal{S}}$ we set: 
\begin{equation}
\Dolbm{k\,}{F}=k^{\ast }\Dolb{\mathscr{D}}{k_{\sharp}\mathcal{F}}
\end{equation}%
Note that in this case $k_{\sharp}=k_{\ast}$ and for each $\alpha $, $%
(k_{\sharp }\mathcal{F})_{\alpha }=k_{\alpha \ast }\mathcal{F}_{\alpha }$.
Hence, for $\alpha \in \mathcal{S}$%
\begin{equation}
\Dolb{k\,}{\mathcal{F}}_{\alpha }=\Dolb{k_{\alpha }\,}{\mathcal{F}_{\alpha }}
\end{equation}%
Thus $\Dolb{k\,}{\bullet }$ is a functor $Mod(\mathscr{X})\rightarrow C^{+}(\mathscr{X})$. 
Furthermore, $\Dolbm{k\,}{F}$ is null in degrees $<0$.

In what follows we shall check the properties of the functor $\Dolb{k\,}{\bullet }$. 
The treatment for $\Dolb{\mathscr{X}\,}{\bullet }$, where $\mathscr{X}$\ is a s.s.complex manifold, is similar.

Properties \textbf{1.a-ss}, \textbf{1.b-ss}, and \textbf{1.e-ss} follow from
the corresponding properties in the embedded case applied for each component 
$\alpha \in \mathcal{S}$. Properties \textbf{1.c} and \textbf{1.d} have no
sense in this context. However, they can be replaced by the following
statements that follow immediately from the embedded case:

\textbf{1.c'-ss} \textit{The terms of } $\Dolbm{k\,}{F}_{\alpha }=
\Dolb{X_{\alpha }}{\mathcal{F}_{\alpha }}$\textit{\ are
soft sheaves for all }$\alpha \in \mathcal{S}$.

\textbf{1.d'-ss }\textit{The terms of } $\Dolb{k\,}{\OXss{X}}_{\alpha }=
\Dolb{X_{\alpha }}{\OX{\alpha }}\ $%
\textit{are }$\OX{X_{\alpha }}$\textit{--flat for all }$\alpha \in \mathcal{S}$

\textbf{2-ss }Consider the following data:

\begin{itemize}
\item[-] $\tau :(I,\mathcal{S})\rightarrow (J,\mathcal{T})$ a mapping of
simplicial complexes

\item[-] $F:(\mathscr{X},k,\mathscr{D})\rightarrow (\mathscr{Y},k^{\prime },%
\mathscr{D}^{\prime })$ a morphism of s.s.embedding triples over $\tau $

\item[-] $\mathcal{F}\in Mod(\mathscr{X})$, $\mathcal{F}=(\mathcal{F}%
_{\alpha })_{\alpha \in \mathcal{S}}$, $\mathcal{G}\in Mod(\mathscr{Y})$, 
$\mathcal{G}=(\mathcal{G}_{\gamma })_{\gamma \in \mathcal{T}}$, and  
$u: \mathcal{G}\rightarrow F_{\ast }\mathcal{F}$ a morphism of $\mathscr{Y}$%
-modules
\end{itemize}

For $\gamma \in \mathcal{T}$ and $\alpha \in I(\gamma ,0)$ consider the
natural morphism:%
\begin{equation}
u_{\gamma \alpha }:\Dolb{k_{\gamma }^{\prime }}{\mathcal{G}_{\gamma }}
\rightarrow F_{\alpha \ast }\Dolb{k_{\alpha }}{\mathcal{F}_{\alpha }}
\end{equation}%
According to property \textbf{3-emb} one sees that the family of morphisms $%
(u_{\gamma \alpha })_{\gamma \alpha }$ satisfies the hypothesis of Lemma \ref%
{Lemma_crit_morf_im_dir}. Hence it induces a morphism%
\begin{equation}
F^{\sharp }(u):\Dolb{k^{\prime }\,}{\mathcal{G}}\rightarrow F_{\sharp }%
\Dolbm{k\,}{F}  \label{morph_pullback_sharp}
\end{equation}

\textbf{3-ss }One checks directly that on each component $\gamma \in 
\mathcal{T}$ the diagram commutes, which comes down to property \textbf{3-emb}.
\medskip

\refstepcounter{subsection} \textbf{\arabic{section}.\arabic{subsection}%
\label{Paragr_loc_emb_sp}} \textbf{The case of a locally embedded analytic
space. }Let $(X,\mathcal{A})$ be a locally embedded analytic space, where
$\mathcal{A}=(U_{i},k_{i},D_{i})_{i\in I}$. We fix the following notations
(see paragraph \ref{paragr_Atlas}):

\begin{itemize}
\item[-] $\mathcal{U}=(U_{i})_{i\in I}$ be the open covering of $X$\
corresponding to the atlas $\mathcal{A}$

\item[-] $(\mathfrak{U},k,\mathfrak{D})$ be the s.s.embedding triple
associated to $(X,\mathcal{A})$ and $b:\mathfrak{U}\rightarrow X$ the
natural morphism given by the inclusions (see Lemma \ref%
{Lemma_assoc_ss_triples})
\end{itemize}

For $\mathcal{F}\in Mod(\OX{X})$ set:%
\begin{equation}
\DolbA{A}{F}=b_{\sharp }\Dolb{k\,}{\mathcal{F}|\mathfrak{U}}  \label{Def_Dolb_A}
\end{equation}
Thus $\DolbA{A}{F}$ is the simple complex associated to the double complex:

\begin{equation}
0\rightarrow \prod\limits_{|\alpha |=0}b_{\alpha \ast }\Dolb{k_{\alpha }\,}
{\mathcal{F}|U_{\alpha }}\rightarrow \prod\limits_{|\alpha |=1} b_{\alpha 
\ast }\Dolb{k_{\alpha }\,}{\mathcal{F}|U_{\alpha}}\rightarrow ...  \label{Complex_Dolb_A}
\end{equation}%
where $\alpha \in \mathcal{N(U)}$.

\textbf{1.f }If the atlas $\mathcal{A}$ consists of only one chart $(X,i,D)$ 
(i.e. $X$ is embedded in the complex manifold $D$) then the functor $\DolbA{A}{F}$ coincides with the functor $\Dolbm{i\,}{F}$ defined at paragraph \ref{paragr_emb_case} (the embedded case). In particular, if $X$ is a complex manifold and $\mathcal{A}$ consists of the chart $(X,id,X)$, then $\Dolb{\mathcal{A}\,}{\OX{X}}$ is the usual 
Dolbeault-Grothendieck resolution on $X$.

\textbf{1.a }Let%
\begin{equation*}
0\rightarrow \mathcal{F}_{1}\rightarrow \mathcal{F}_{2}\rightarrow \mathcal{F%
}_{3}\rightarrow 0
\end{equation*}%
be a short exact sequence of $\OX{X}$-modules. By properties 
\textbf{1.a-ss }(exactness of $\Dolb{k\,}{\bullet }$ ) and \textbf{1.c'-ss }it follows that 
\begin{equation*}
0\rightarrow \Dolb{k\,}{\mathcal{F}_{1}|\mathfrak{U}} \rightarrow 
\Dolb{k\,}{\mathcal{F}_{2}|\mathfrak{U}} \rightarrow \Dolb{k\,}
{\mathcal{F}_{3}|\mathfrak{U}} \rightarrow 0
\end{equation*}%
is an exact sequence of complexes of $\mathfrak{U}$-modules such that the
terms of every $\Dolb{k\,}{\mathcal{F}_{i}}_{\alpha }$ are soft sheaves. Hence the image by $i_{\alpha \ast }$ of the exact sequence on $U_{\alpha }$ is an exact sequence on $X$, and the exactness of $\Dolb{\mathcal{A}\,}{\bullet}$ follows by using (\ref{Complex_Dolb_A}).

\textbf{1.c }$b_{\alpha \ast }(\Dolb{k\,}{\mathcal{F}}_{\alpha})=
b_{\alpha \ast }(\Dolb{k_{\alpha }}{\mathcal{F}|U_{\alpha }})$
has soft terms. Thus the terms of $\DolbA{A}{F}$ are cartesian products of soft sheaves and consequently soft.

\textbf{1.b }The morphism is given by the composition:%
\begin{equation}
\mathcal{F}\overset{u}{\rightarrow }b_{\sharp }(\mathcal{F}|\mathfrak{U})%
\overset{v}{\rightarrow }b_{\sharp} \Dolb{k\,}{\mathcal{F}|\mathfrak{U}}=\DolbA{A}{F} \label{morph_augment}
\end{equation}%
As remarked in Example \ref{Ex_Cech_complex}, $b_{\sharp }(\mathcal{F}|%
\mathfrak{U})$ coincides with the \v{C}ech complex of $\mathcal{F}$ with
respect to the open covering $\mathcal{U}$ and $u$ is a quasi-isomorphism.

To see that $v$ is also a quasi-isomorphism we restrict ourselves to an open
set $U_{j}$ of the covering $\mathcal{U}$. $\DolbA{A}{F}|U_{j}$ is the simple complex associated to the double complex (see formula (\ref{Complex_Dolb_A})):%
\begin{equation}
K^{p,q}=\prod\limits_{|\alpha |=p}b_{j\alpha \ast } \Dolbn^{q}(k_{\alpha }\, ,\mathcal{F}|U_{\alpha })|U_{\alpha }\cap U_{j}
\end{equation}%
where $b_{j\alpha }:U_{\alpha }\cap U_{j}\hookrightarrow U_{j}$ is the
natural inclusion.

The terms of the second drawer of the spectral sequence associated to 
$K^{\cdot \cdot }$ are:%
\begin{equation}
E_{2}^{p,q}=\left\{ 
\begin{array}{cc}
\mathcal{F}|U_{j} & \text{if }p=q=0 \\ 
0 & \text{otherwise}%
\end{array}%
\right.  \label{K_spect_seq}
\end{equation}%
Indeed, taking the cohomology of $K^{\cdot \cdot }$ in the $q$-direction,
one obtains for $q\geq 0$, the following \v{C}ech-type complex relative to
the covering $\mathcal{U}\cap U_{j}:$%
\begin{equation}
\cdots \rightarrow \prod\limits_{|\alpha |=p}R^{q}b_{j\alpha \ast }(\mathcal{%
F}|U_{\alpha }\cap U_{j})\rightarrow \prod\limits_{|\alpha
|=p+1}R^{q}b_{j\alpha \ast }(\mathcal{F}|U_{\alpha }\cap U_{j})
\rightarrow \cdots   \tag{C(q)}  \label{Complex_R(q)}
\end{equation}%
Note that $U_{j}$ is among the open sets of the covering $\mathcal{U}\cap %
U_{j}$ and that $b_{jj}=id_{U_{j}}$. Using a homotopy argument similar to
that in \cite{FAC}, chap 1, \S 3 Proposition 3 and \S 4, Lemma 1 one checks
that the cohomology of $C(q)$:%
\begin{equation*}
H^{p}(C(q))=\left\{ 
\begin{array}{cc}
R^{q}b_{jj\ast }(\mathcal{F}|U_{j})=R^{q}id_{\ast }(\mathcal{F}|U_{j}) & 
\text{if }p=0 \\ 
0 & \text{otherwise}%
\end{array}%
\right.
\end{equation*}%
which proves the claim. Moreover, one also checks that $v$ induces an
isomorphism between the second drawers of the spectral sequences associated
to $\mathcal{C}^{\bullet }(\mathcal{U},\mathcal{F})|U_{j}$ and $K^{\cdot
\cdot }$ and, consequently, is a quasi-isomorphism.

\bigskip \textbf{1.e} and \textbf{1.d }The morphism (\ref{ident_Dolb_A}) is
induced by the natural morphisms:%
\begin{equation}
\prod\limits_{|\alpha |=p}b_{\alpha \ast }\Dolb{k_{\alpha }\,}
{\OX{X}|U_{\alpha }}\otimes _{\OX{X}}\mathcal{F}\rightarrow
\prod\limits_{|\alpha |=p}b_{\alpha \ast }\Dolb{k_{\alpha }\,}
{\mathcal{F}|U_{\alpha }}  \label{morph_comut_prod_cartez}
\end{equation}%
If $\mathcal{F}$ is a coherent $\OX{X}$-module then (\ref%
{morph_comut_prod_cartez}) is an isomorphism. Indeed if $\mathcal{F}=\OX{X}$
or $\mathcal{F}=\OX{X}^{p}$ then the statement is clear. Hence, using local
presentations of $\mathcal{F}$%
\begin{equation*}
\OX{X}^{p}\rightarrow \OX{X}^{q}\rightarrow \mathcal{F} \rightarrow 0
\end{equation*}%
it follows that (\ref{morph_comut_prod_cartez}) is a local isomorphism, and
consequently an isomorphism. Note that if $\mathcal{F}$ is not coherent then %
(\ref{morph_comut_prod_cartez}) may not be an isomorphism.

To prove the flatness of $\Dolb{\mathcal{A}\,}{\OX{X}}$ it is enough to check 
that the following sequence:%
\begin{equation}
0\rightarrow \Dolb{\mathcal{A}\,}{\OX{X}} \otimes _{\OX{X}} \mathcal{I}
\rightarrow \Dolb{\mathcal{A}\,}{\OX{X}}
\end{equation}%
is exact, where $\mathcal{I}\subset \OX{X}|U$ is any coherent ideal on some 
open set $U\subset X$. This is implied by the following commutative diagram:%
\begin{equation}
\begin{CD} @. \Dolb{\mathcal{A}\,}{\OX{X}}\otimes _{\OX{X}} \mathcal{I} @>>>
\Dolb{\mathcal{A}\,}{\OX{X}}\\ @. @VV\wr V @|\\ 0 @>>>
\Dolb{\mathcal{A}\,}{\mathcal{I}} @>>>
\Dolb{\mathcal{A}\,}{\OX{X}} 
\end{CD}
\end{equation}%
since the lower row is exact (exactness of $\Dolb{\mathcal{A}\,}{\bullet }$ ) and 
the left hand side vertical arrow is an isomorphism by property \textbf{1.e} in the coherent case.

Finally, to prove that the morphism (\ref{ident_Dolb_A}) is a
quasi-isomorphism consider the commutative diagram:
\begin{equation}
\begin{CD}
\Dolb{\mathcal{A}\,}{\OX{X}\otimes_{\OX{X}}%
\mathcal{F}} @>>> \DolbA{A}{F}\\ @AAA @AAA\\
\OX{X}\otimes_{\OX{X}}\mathcal{F} @>>> \mathcal{F} \end{CD}
\medskip
\end{equation} 
By the $\OX{X}$-flatness of $\Dolb{\mathcal{A}\,}{\OX{X}})$ and \textbf{1.c, }
the vertical arrows are quasi-isomorphisms, and the lower horizontal arrow is an isomorphism
which yields the result.

\textbf{2 } Let $(\mathfrak{U},k,\mathfrak{D})$, $(\mathfrak{V},k^{\prime },%
\mathfrak{D}^{\prime })$ be the s.s.embedding triples associated to $(X,%
\mathcal{A})$, $(Y,\mathcal{B})$ and $b:\mathfrak{U}\rightarrow X$, 
$b^{\prime }:\mathfrak{V}\rightarrow Y$ the morphisms given by the
inclusions. Let%
\begin{equation}
F:(\mathfrak{U},k,\mathfrak{D})\rightarrow (\mathfrak{V},k^{\prime },%
\mathfrak{D}^{\prime })
\end{equation}%
be the morphism induced by $f:(X,\mathcal{A})\rightarrow (Y,\mathcal{B})$
(see Remark \ref{Lemma_lifting_morph} \textbf{2}) and 
\begin{equation}
F^{\ast }(u):\mathcal{G}|\mathfrak{V} \rightarrow F_{\ast }
(\mathcal{F}|\mathfrak{U})
\end{equation}%
be the morphism induced by $u:\mathcal{G}\rightarrow f_{\ast }\mathcal{F}$
(see Lemma \ref{Lemma_lifting_morph}. \textbf{1}). By property \textbf{2-ss}
there exists a natural morphism:%
\begin{equation}
F^{\sharp }(F^{\ast }(u)):\Dolb{k^{\prime}\,}{\mathcal{G}|{\mathfrak{V}}} \rightarrow F_{\sharp }\Dolb{k\,}{\mathcal{F}|\mathfrak{U}}
\end{equation}%
Now apply $b_{\sharp }^{\prime }$ and use Lemma \ref{Lemma_Comp_im_dir} to
obtain the morphism:%
\begin{equation}
f^{\ast }(u):\DolbA{B}{G} \rightarrow f_{\ast }\DolbA{A}{F}
\end{equation}

\begin{remark}
\label{Rem_q_izo_over_id}Assume $Y=X$, $f=(id,\tau ,(\tilde{f}_{i})_{i\in
I}):(X,\mathcal{A})\rightarrow (X,\mathcal{B})$, $\mathcal{G}=\mathcal{F}$, 
and $u=id$. Then the morphism defined above%
\begin{equation}
f^{\ast }(id):\DolbA{B}{F} \rightarrow \DolbA{A}{F}
\end{equation}%
is a quasi-isomorphism, since it is a morphism between two resolutions of 
$\mathcal{F}$.
\end{remark}

\textbf{3 } Let $(\mathfrak{U},k,\mathfrak{D}),$ $(\mathfrak{V},k^{\prime },%
\mathfrak{D}^{\prime }),$ $(\mathfrak{W},k^{^{\prime \prime }},\mathfrak{D}%
^{^{\prime \prime }})$ be the s.s.embedding triples associated to $(X,%
\mathcal{A})$, $(Y,\mathcal{B})$, and $(Z,\mathcal{C})$ and $b:\mathfrak{U}%
\rightarrow X$, $b^{\prime }:\mathfrak{V}\rightarrow Y$, $b^{\prime \prime }:%
\mathfrak{W}\rightarrow Z$ the morphisms given by the inclusions. By Lemma %
\ref{Lemma_lifting_morph} \textbf{2 \ }$\mathcal{F}|\mathfrak{U}$, $\mathcal{%
G}|\mathfrak{V}$, $\mathcal{H}|\mathfrak{W}$ satisfy the hypothesis of
property \textbf{3-ss }and hence the following diagram commutes:
\begin{equation}
\begin{tikzcd}[column sep=-0.2cm]
H_{\sharp}\Dolb{k\,}{\mathcal{F}|\mathfrak{U}} &&
G_{\sharp}\Dolb{k^{\prime}\,}{\mathcal{G}|\mathfrak{V}} \arrow{ll} \\
& \Dolb{k^{\prime\prime}\,}{\mathcal{H}|\mathfrak{W}} \arrow{ul}
\arrow{ur} \end{tikzcd}
\end{equation}
To get the result apply $b_{\sharp }^{\prime \prime }$ to the above diagram
and use Lemma \ref{Lemma_Comp_im_dir}.

\begin{remark}
By properties \textbf{2} and \textbf{3} of Theorem \ref{Theor_Dolb} the
functor $\Dolbn$ extends to the category of s.s.locally embedded
analytic spaces, with the pullback morphisms defined using $\sharp $-direct
images (similar to morphism (\ref{morph_pullback_sharp})). In particular, with
the notations at the beginning of paragraph \ref{Paragr_loc_emb_sp}, the
pullback morphism over the natural mapping $b:\mathfrak{U} \rightarrow X$,%
\begin{equation*}
b^{\sharp }:\DolbA{A}{F} \rightarrow b_{\sharp }%
\Dolb{k\,}{\mathcal{F}|\mathfrak{U}}
\end{equation*}%
is the identity of $\DolbA{A}{F}$.
\end{remark}

\section{Further Results and Applications}

\refstepcounter{subsection} \textbf{\arabic{section}.%
\arabic{subsection} }\label{paragr_uniqness_deriv_categ}\textbf{The functor }%
$\Dolbn$ \textbf{in the derived category}. To simplify notation in
what follows we shall omit to write the localization functors such as 
\begin{equation*}
Q:K(\OX{X})\rightarrow D(\OX{X})
\end{equation*}%
It should be clear from the context to which category each complex belongs.

\begin{corollary}
\label{Corol_q_inv}Let $(X,\mathcal{A})$ be a locally embedded analytic
space and let \linebreak $\mathcal{F}\in Mod(\OX{X})$.

\begin{enumerate}
\item The natural functor $\mathcal{F}\rightarrow \DolbA{A}{F}$ gives a functorial isomorphism in the derived category $D^{+}(\OX{X})$

\item Let $S(X)$ be the full subcategory of $D^{+}(\OX{X})$ consisting of complexes with soft terms and 
\begin{equation*}
j:S(X)\rightarrow D^{+}(\OX{X})
\end{equation*}%
the inclusion functor. Then the extension of $\Dolb{\mathcal{A}\,}
{\bullet }$ to $D^{+}(\OX{X})$ is a quasi-inverse for $j$.
\end{enumerate}
\end{corollary}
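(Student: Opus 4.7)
Part (1) is essentially already contained in Theorem \ref{Theor_Dolb}(1.b): the augmentation $\mathcal{F}\to \DolbA{A}{F}$ exhibits $\DolbA{A}{F}$ as a resolution, hence is a quasi-isomorphism, which becomes an isomorphism in $D^{+}(\OX{X})$. Functoriality in $\mathcal{F}$ is inherited from the functoriality of $\Dolb{\mathcal{A}\,}{\bullet}:Mod(\OX{X})\to C^{+}(X)$, so nothing beyond invoking (1.b) is required.

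For Part (2) my plan is the following. First I would extend $\Dolb{\mathcal{A}\,}{\bullet}$ to complexes by setting, for $\mathcal{F}^{\bullet}\in C^{+}(\OX{X})$,
\begin{equation*}
\Dolb{\mathcal{A}\,}{\mathcal{F}^{\bullet}} \;=\; \mathrm{Tot}\bigl(\Dolbn^{p}(\mathcal{A};\mathcal{F}^{q})\bigr),
\end{equation*}
the total complex of the evident double complex (with Dolbeault differential horizontally and $(-1)^{p}d_{\mathcal{F}^{\bullet}}$ vertically). Boundedness of $\mathcal{F}^{\bullet}$ from below makes each total degree a finite product of terms of type $\Dolbn^{p}(\mathcal{A};\mathcal{F}^{q})$, which are soft by (1.c); so the extended functor lands in $S(X)$. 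Using the exactness of $\Dolb{\mathcal{A}\,}{\bullet}$ (property 1.a) componentwise together with the standard fact that the total complex of a bounded-below acyclic double complex is acyclic, the functor preserves quasi-isomorphisms and descends to $\Dolb{\mathcal{A}\,}{\bullet}:D^{+}(\OX{X})\to S(X)$.

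Second, I would show that the natural augmentation $\eta_{\mathcal{F}^{\bullet}}:\mathcal{F}^{\bullet}\to\Dolb{\mathcal{A}\,}{\mathcal{F}^{\bullet}}$ is a quasi-isomorphism for every $\mathcal{F}^{\bullet}\in C^{+}(\OX{X})$. The trick is to augment the double complex by placing $\mathcal{F}^{q}$ in column $p=-1$; by Theorem \ref{Theor_Dolb}(1.b) each row of the augmented complex is a resolution, hence exact, and the filtration by columns converges on each stalk because of boundedness below in $q$. The induced spectral sequence degenerates, showing that the mapping cone of $\eta_{\mathcal{F}^{\bullet}}$ is acyclic, hence $\eta_{\mathcal{F}^{\bullet}}$ is a quasi-isomorphism. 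Naturality of $\eta$ then provides a natural isomorphism $\mathrm{id}_{D^{+}(\OX{X})}\simeq j\circ\Dolb{\mathcal{A}\,}{\bullet}$. Since $j$ is the inclusion of a full subcategory, restricting this isomorphism to $S(X)$ and using full faithfulness of $j$ yields $\Dolb{\mathcal{A}\,}{\bullet}\circ j\simeq\mathrm{id}_{S(X)}$, establishing that $\Dolb{\mathcal{A}\,}{\bullet}$ is a quasi-inverse for $j$.

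The main technical obstacle will be the spectral-sequence convergence in the second step when the nerve of the atlas $\mathcal{A}$ is infinite-dimensional: then $\Dolb{\mathcal{A}\,}{\mathcal{F}^{q}}$ is only bounded below (not bounded), and one must verify that on stalks the filtration by rows of the augmented double complex is exhaustive and Hausdorff so that the spectral sequence really converges to the cohomology of the cone. Granting this standard (but not automatic) homological argument, everything else is routine bookkeeping built on top of Theorem \ref{Theor_Dolb}.
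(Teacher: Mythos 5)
Your proposal is correct and follows essentially the same route as the paper, whose proof of this corollary is a one-line citation: part (1) is exactly Theorem \ref{Theor_Dolb} \textbf{1.b}, and part (2) is deduced from part (1) together with \textbf{1.c}. The total-complex extension to $C^{+}(\OX{X})$ and the convergence of the row and column spectral sequences (unproblematic since each total degree involves only finitely many bidegrees, even when the nerve of $\mathcal{A}$ is infinite-dimensional) are the standard details the paper leaves implicit.
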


\begin{proof}
\textbf{1}. follows from Theorem \ref{Theor_Dolb} \textbf{1.b.} while 
\textbf{2. }folows from \textbf{1. }and Theorem \ref{Theor_Dolb} \textbf{1.c.}
\end{proof}

\begin{remark}
Corollary \ref{Corol_q_inv} implies in particular that using $\Dolb{\mathcal{A}\,}{\bullet }$ one defines derived functors for any
functor $F:Mod(\OX{X})\rightarrow Mod(\OX{X})$ s.t. soft sheaves are $F$-acyclic.
\end{remark}

One checks immediately:

\begin{corollary}
Let $(X,\mathcal{A})$ be a locally embedded analytic space and \linebreak $%
\mathcal{F}\in Mod(\OX{X})$. Then the complex $\Gamma (X,\DolbA{A}{F})$ is a representative of $R\Gamma (X,\mathcal{F})$, and hence it computes the cohomology groups $H^{\bullet }(X,\mathcal{F})$.
\end{corollary}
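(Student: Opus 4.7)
The plan is to apply a standard acyclic-resolution argument. By Theorem \ref{Theor_Dolb} parts \textbf{1.b} and \textbf{1.c}, the complex $\DolbA{A}{F}$ is a resolution of $\mathcal{F}$ whose terms are soft $\OX{X}$-modules. Thus there is a natural quasi-isomorphism $\mathcal{F} \to \DolbA{A}{F}$ in $D^{+}(\OX{X})$, and it suffices to show that each term $\Dolbn^{q}(\mathcal{A}\,;\mathcal{F})$ is acyclic for the global sections functor $\Gamma(X,\cdot)$.

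First I would invoke the classical fact that soft sheaves of abelian groups on a paracompact (or more generally, a complex analytic) space are $\Gamma(X,\cdot)$-acyclic, i.e. $H^{i}(X,\mathcal{S}) = 0$ for $i \geq 1$ when $\mathcal{S}$ is soft (this is standard; see, for instance, Godement or Bredon). Since an $\OX{X}$-module that is soft as a sheaf of abelian groups is also $\Gamma$-acyclic at the level of abelian sheaves, and since sheaf cohomology of an $\OX{X}$-module computed in $Mod(\OX{X})$ agrees with its cohomology computed in the category of sheaves of abelian groups, each $\Dolbn^{q}(\mathcal{A}\,;\mathcal{F})$ is $\Gamma(X,\cdot)$-acyclic.

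Next I would recall the general principle that if $\mathcal{F} \to \mathcal{L}^{\bullet}$ is a resolution by $\Gamma(X,\cdot)$-acyclic sheaves, then the complex $\Gamma(X,\mathcal{L}^{\bullet})$ is a representative of $R\Gamma(X,\mathcal{F})$ in $D^{+}(Ab)$. Applying this to the acyclic resolution $\mathcal{F} \to \DolbA{A}{F}$ yields an isomorphism
\begin{equation*}
R\Gamma(X,\mathcal{F}) \;\simeq\; \Gamma(X,\DolbA{A}{F})
\end{equation*}
in $D^{+}(Ab)$, and taking cohomology gives $H^{\bullet}(X,\mathcal{F}) \simeq H^{\bullet}\bigl(\Gamma(X,\DolbA{A}{F})\bigr)$, as required.

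There is essentially no obstacle: both required ingredients (the resolution and the softness of its terms) are explicit conclusions of Theorem \ref{Theor_Dolb}, and the acyclicity-of-soft-sheaves lemma together with the derived-functor comparison is standard. The only mild point worth noting is the compatibility between $\OX{X}$-module cohomology and abelian-sheaf cohomology, which is a general fact about ringed spaces and is needed to justify that softness in the abelian-sheaf sense suffices for $\Gamma$-acyclicity in the category $Mod(\OX{X})$.
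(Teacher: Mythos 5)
Your proposal is correct and follows exactly the route the paper intends: the paper offers no written proof beyond ``One checks immediately,'' relying precisely on Theorem \ref{Theor_Dolb} \textbf{1.b} (resolution) and \textbf{1.c} (soft terms) together with the standard fact that soft sheaves are $\Gamma(X,\cdot)$-acyclic, which is the same acyclic-resolution argument you give. Your added remark about the agreement of cohomology in $Mod(\OX{X})$ and in abelian sheaves is a reasonable point of care but does not change the argument.
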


\begin{corollary}
\label{Corol_topol_FS}Let $(X,\mathcal{A})$ be a locally embedded analytic
space and \linebreak $\mathcal{F}\in Coh(\OX{X})$. Assume that $X$
is countable at infinity and that $\mathcal{A}$\ has at most countably many
charts. Then the terms of the complex $\Gamma (X,\DolbA{A}{F})$ have natural topologies of type FS and the differentials are continuous. Furthermore, the terms of $\Gamma (X,\DolbA{A}{F})$ induce the natural topology on the cohomology groups $H^{\bullet }(X,\mathcal{F})$.
\end{corollary}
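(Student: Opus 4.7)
The proof proceeds in three stages: first I topologize the individual terms of $\Gamma(X,\DolbA{A}{F})$, then verify continuity of the differentials, and finally identify the induced topology on cohomology with the classical natural one.

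\textbf{Stage 1.} Formula (\ref{Complex_Dolb_A}) exhibits the degree-$n$ term of $\Gamma(X,\DolbA{A}{F})$ as a finite direct sum, over pairs $(p,q)$ with $p+q=n$, of products
\[
\prod_{|\alpha|=p}\Gamma\bigl(U_\alpha,\,\Dolbn^{q}(k_\alpha\,;\mathcal{F}|U_\alpha)\bigr),
\]
where $\alpha$ ranges over $\mathcal{N}(cov(\mathcal{A}))$. The identity (\ref{i*_Def_Dolb_i}) rewrites each factor as
\[
\Gamma\bigl(D_\alpha,\,\mathcal{E}_{D_\alpha}^{0,q}\otimes_{\OX{D_\alpha}}k_{\alpha\ast}(\mathcal{F}|U_\alpha)\bigr).
\]
Since $X$ is countable at infinity and $\mathcal{A}$ has at most countably many charts, each $D_\alpha$ is a finite product of complex manifolds countable at infinity, and the pushed-forward sheaf $k_{\alpha\ast}(\mathcal{F}|U_\alpha)$ is coherent on $D_\alpha$. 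By the classical Malgrange--Serre theory (\cite{M1}; cf.\ \cite{B-S}, Ch.~7, \S4.b) each such space of $(0,q)$-forms with coherent coefficients carries a canonical Fr\'echet--Schwartz structure. Countable products and finite direct sums of FS spaces remain FS, so every term of $\Gamma(X,\DolbA{A}{F})$ is FS.

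\textbf{Stage 2.} The total differential of $\Gamma(X,\DolbA{A}{F})$ splits (up to signs) into two kinds of contributions: the Dolbeault operators $\overline{\partial}$ acting within each $\Gamma(D_\alpha,\mathcal{E}_{D_\alpha}^{0,q}\otimes\cdot)$, continuous by the classical theory; and the \v{C}ech-type coboundary operators, which are alternating sums of restriction maps $\Gamma(D_\alpha,\cdot)\to\Gamma(D_\beta,\cdot)$ for $\alpha\subset\beta$, continuous because restricting smooth forms with coherent coefficients to a smaller open set is continuous in the FS topology. Sign changes, finite sums, and the canonical projections from the countable products are all continuous in the FS category, so each differential is continuous.

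\textbf{Stage 3.} By Theorem \ref{Theor_Dolb}, points \textbf{1.b} and \textbf{1.c}, the complex $\Gamma(X,\DolbA{A}{F})$ represents $R\Gamma(X,\mathcal{F})$ and comes from a soft resolution of $\mathcal{F}$. The general theory of natural topologies on analytic cohomology (Baran \cite{B1}, based on Ramis--Ruget \cite{R-R} and Verdier \cite{Verd}) characterizes the natural topology on $H^\bullet(X,\mathcal{F})$ as the one induced by any such FS-representative of $R\Gamma(X,\mathcal{F})$, via an open-mapping argument comparing any two representatives through a common refinement. Applied to the FS complex of Stages 1--2, this yields precisely the natural topology on $H^\bullet(X,\mathcal{F})$. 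The principal obstacle is this last step: since $H^\bullet(X,\mathcal{F})$ may fail to be Hausdorff, uniqueness of the natural topology requires careful handling of non-separated quotient and subspace topologies inside the FS category, which is exactly the content of the machinery invoked above.
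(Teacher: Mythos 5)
Your Stages 1 and 2 match the paper's argument: the terms of $\Gamma (X,\Dolbn(\mathcal{A}\,;\mathcal{F}))$ are countable products of spaces of the form $\Gamma (X,\mathcal{E}^{p,q}\otimes \mathcal{F})$, hence FS, and the differentials (interior $\overline{\partial }$-type maps plus \v{C}ech coboundaries built from restrictions) are continuous. (One small inaccuracy: the ambient manifolds $D_{\alpha }$ need not be countable at infinity; what matters is that the coefficient sheaf $k_{\alpha \ast }(\mathcal{F}|U_{\alpha })$ is supported on $k_{\alpha }(U_{\alpha })\simeq U_{\alpha }$, which is countable at infinity, so the section space is still FS.)

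The genuine gap is in Stage 3. You assert that the natural topology on $H^{\bullet }(X,\mathcal{F})$ is ``the one induced by any such FS-representative of $R\Gamma (X,\mathcal{F})$,'' but this is not a citable theorem in that generality: an abstract isomorphism in the derived category carries no continuity information, and the uniqueness results of Verdier and of \cite{B1} apply only to representatives linked by \emph{continuous} quasi-isomorphisms (so that the open-mapping theorem for FS spaces can be brought to bear on the possibly non-Hausdorff cohomology quotients). So the burden of the proof is precisely to exhibit such a continuous comparison with a standard representative, and your proposal never does this. The paper supplies it in two concrete steps: first, for two embedding atlases $\mathcal{A}$, $\mathcal{B}$ the pullback $f^{\ast }(id)$ of Remark \ref{Rem_q_izo_over_id} is a continuous quasi-isomorphism between the two global-section complexes, so one may assume $cov(\mathcal{A})$ is a (countable) Stein covering; second, the augmentation $v$ of diagram (\ref{morph_augment}) induces a continuous quasi-isomorphism $C^{\bullet }(cov(\mathcal{A}),\mathcal{F})\rightarrow \Gamma (X,\Dolbn(\mathcal{A}\,;\mathcal{F}))$ from the \v{C}ech complex of that Stein covering, which by definition carries the natural topology. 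Both ingredients are available in the framework you are using, but without naming them your Stage 3 reduces to restating the conclusion.
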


\begin{proof}
It is well known that $\Gamma (X,\mathcal{E}_{X}^{p,q}\otimes \mathcal{F})$
has a natural topology of type FS (see e.g. \cite{B-S} 7\S 4.b). Thus the global 
sections of the terms in (\ref{Complex_Dolb_A}) are countable products of FS spaces and hence are themselves FS. Note that if $\mathcal{B}$ is another embedding atlas of $X$ then, by Remark \ref{Rem_q_izo_over_id}, $\Gamma (X,\DolbA{A}{F})$ and $\Gamma(X,\DolbA{B}{F})$ induce the same topology on $H^{\bullet }(X,\mathcal{F})$
(since $f^{\ast }(id)$ determines a continuous quasi-isomorphism). Thus, to
check that this topology coincides with the natural one, we can assume that $%
cov(\mathcal{A})$ is a Stein covering. The morphism $v$ in diagram (\ref%
{morph_augment}) determines a continuous quasi-isomorphism on the global
sections:%
\begin{equation*}
C^{\bullet }(cov(\mathcal{A}),\mathcal{F})\rightarrow \Gamma (X,\DolbA{A}{F})
\end{equation*}%
which ends the proof, since the left-hand side complex (the \v{C}ech complex
with respect to $cov(\mathcal{A})$ ) defines the natural topology on 
$H^{\bullet }(X,\mathcal{F})$.
\end{proof}

\begin{remark}
By \textit{\cite{Colt}} and \cite{Colt-Joi} if $X$ is a finite dimensional
analytic space countable at infinity then it can be covered by finitely many
Stein open sets; if moreover $X$ is connected then the Stein open sets can
also be chosen connected. Hence if $X$ has also finite embedding dimension
then it\ has an embedding atlas $\mathcal{A}$ with finitely many charts,
respectively finitely many connected charts, and for any $\mathcal{F}\in Mod(\OX{X})$ the terms of the complex $\DolbA{A}{F}$ consist of products with finitely many factors (see (\ref {Complex_Dolb_A})).
\end{remark}

\begin{theorem}
\label{Theor_Dolb_cat_deriv}Let $f:X\rightarrow Y$ be a morphism of analytic
spaces, $\mathcal{F}\in Mod(\OX{X})$,
\linebreak
$\mathcal{G}\in Mod(\OX{Y})$ and 
$u:\mathcal{G}\rightarrow f_{\ast }\mathcal{F}$ a morphism of $\OX{Y}$-modules. 
Let moreover $\mathcal{A}=(U_{i},k_{i},D_{i})_{i\in I}$ and 
$\mathcal{B}=(V_{j},k_{j}^{\prime },D_{j}^{\prime })_{j\in J}$ be embedding 
atlases of $X$, respectively $Y$. Then

\begin{enumerate}
\item $f_{\ast }\DolbA{A}{F}$ is a representative for $Rf_{\ast }\mathcal{F}$

\item There exists a unique morphism $f^{\ast }(u)$ in $D^{+}(\OX{Y})$ such that the following diagram commutes (in $D^{+}(\OX{Y})$):
\begin{equation}
\begin{CD} 
\DolbA{B}{G} @>f^{\ast }(u)>> f_{\ast }\DolbA{A}{F} \\ 
@AAb^{\prime}A @AAf_{*}bA\\
\mathcal{G} @>{u}>> f_{\ast }\mathcal{F} 
\end{CD}  \label{diagr_f*_u_deriv}
\end{equation}

\item The morphism $f^{\ast }(u)$ can be represented as a sequence of
pullback morphisms

\item Let $g:Y \rightarrow Z$ be another morphism of analytic spaces and $%
h=g\circ f$. Let moreover $\mathcal{H}\in Mod(\OX{Z})$ and $v:%
\mathcal{H} \rightarrow g_{\ast }\mathcal{G}$, $w:\mathcal{H} \rightarrow %
h_{\ast }\mathcal{F}$ morphisms of $\OX{Z}$-modules, such that $%
g_{\ast }(u)\circ v=w$. If $\mathcal{C}$\ is an embedding atlas of $Z$ then,
in the derived category $D^{+}(\OX{Z})$, one has the commutative
diagram: 
\begin{equation}
\begin{tikzcd}[column sep=-0.2cm] h_{\ast}\DolbA{A}{F} && g_{\ast}\DolbA{B}{G}
\arrow{ll}[swap]{Rg_{\ast}(f^{\ast}(u)} \\ & \DolbA{C}{H} \arrow{ul}{h^{\ast}(w)} \arrow{ur}[swap]{g^{\ast}(v)}
\end{tikzcd}
\end{equation}
\end{enumerate}
\end{theorem}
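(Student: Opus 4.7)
The plan is to reduce each assertion to a statement at the level of $C^{+}(Y)$ that is already provided by Theorem \ref{Theor_Dolb}, exploiting the softness of the components of $\DolbA{A}{F}$ to pass legitimately to the derived category.

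For (1), I would observe that by Theorem \ref{Theor_Dolb}, parts (1.b) and (1.c), $\DolbA{A}{F}$ is a resolution of $\mathcal{F}$ with soft components; since analytic spaces are paracompact, soft sheaves are $f_{\ast}$-acyclic, so $f_{\ast}\DolbA{A}{F}$ represents $Rf_{\ast}\mathcal{F}$.

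For the existence in (2), the key step is to construct an auxiliary atlas bridging $(X,\mathcal{A})$ and $(Y,\mathcal{B})$. Using Remark \ref{Rem_morph_atlas_gen}, I would pick an embedding atlas $\mathcal{A}_{1}$ of $X$ whose underlying covering refines $(U_{i}\cap f^{-1}(V_{j}))_{i,j}$, so that $\mathcal{A}_{1},\mathcal{A}$ are $id_{X}$-compliant and $\mathcal{A}_{1},\mathcal{B}$ are $f$-compliant. Fixing a refinement map $\tau$, Lemma \ref{Lemma_complet_morph_atlas}.\ref{Lemma_complet_morph_atlas_1} furnishes the diagram
\[
(X,\mathcal{A})\xleftarrow{(id,id,p_{1})}(X,\mathcal{A}_{1}\times _{\tau }\mathcal{B})\xrightarrow{(f,\tau ,p_{2})}(Y,\mathcal{B}).
\]
Applying Theorem \ref{Theor_Dolb}, part (2), to the left arrow with $id_{\mathcal{F}}$ and to the right arrow with $u$ yields, in $C^{+}(Y)$,
\[
\DolbA{B}{G}\longrightarrow f_{\ast}\Dolb{\mathcal{A}_{1}\times _{\tau }\mathcal{B}\,}{\mathcal{F}}\longleftarrow f_{\ast}\DolbA{A}{F};
\]
by Remark \ref{Rem_q_izo_over_id} the right arrow is a quasi-isomorphism, so inverting it in $D^{+}(\OX{Y})$ defines $f^{\ast}(u)$. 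Commutativity of (\ref{diagr_f*_u_deriv}) will be inherited from the two commutative squares produced by Theorem \ref{Theor_Dolb}, part (2). Assertion (3) is built into this construction, since $f^{\ast}(u)$ is literally the composition of a pullback morphism along $(f,\tau ,p_{2})$ with the inverse of a pullback quasi-isomorphism along $(id,id,p_{1})$.

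For uniqueness in (2), I would compare two candidate constructions from choices $(\mathcal{A}_{1},\tau _{1})$ and $(\mathcal{A}_{1}',\tau _{2})$ by first taking (using Remark \ref{Rem_morph_atlas_gen} again) a common refinement $\mathcal{A}_{2}$ of the underlying coverings, then forming the atlas $\mathcal{A}_{2}\times _{\tau _{1}\tau _{2}}\mathcal{B}$ through Lemma \ref{Lemma_complet_morph_atlas}.\ref{Lemma_complet_morph_atlas_2}; Theorem \ref{Theor_Dolb}, part (3), applied to the resulting diamond of morphisms over $id_{X}$ forces the two candidates to coincide in $D^{+}(\OX{Y})$. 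For (4), I would choose an atlas $\mathcal{A}_{1}$ of $X$ whose covering refines $cov(\mathcal{A})$, $f^{-1}(cov(\mathcal{B}))$ and $h^{-1}(cov(\mathcal{C}))$, an atlas $\mathcal{B}_{1}$ of $Y$ refining $cov(\mathcal{B})$ and $g^{-1}(cov(\mathcal{C}))$, and suitable refinement maps $\tau ,\upsilon$; Corollary \ref{Corol_Atlas_compunere} then supplies the triple-product atlas $\mathcal{A}_{1}\times _{\tau }\mathcal{B}_{1}\times _{\upsilon }\mathcal{C}$ together with the commutative diagram (\ref{diagr_morph_compoz}). Applying the functor $\Dolbn$ and Theorem \ref{Theor_Dolb}, part (3), to this diagram, and then inverting the quasi-isomorphisms along the $id_{X}$- and $id_{Y}$-morphisms, will give the desired commutative triangle in $D^{+}(\OX{Z})$. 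The principal obstacle throughout the argument is precisely the independence of the construction from these auxiliary atlases and refinement maps: it is a bookkeeping task that rests essentially on Lemma \ref{Lemma_complet_morph_atlas}.\ref{Lemma_complet_morph_atlas_2} and Remark \ref{Rem_comut_morph_compoz}, and once it is in place all four assertions follow uniformly.
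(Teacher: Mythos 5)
Your overall strategy (a roof of product atlases, with the leg over $id_X$ inverted in the derived category thanks to softness and Remark \ref{Rem_q_izo_over_id}) is the paper's strategy, and your treatment of (1) and the outline for (4) are fine. But there is a concrete error in the central construction. After choosing $\mathcal{A}_{1}$ over a refinement of $(U_{i}\cap f^{-1}(V_{j}))_{i,j}$, Lemma \ref{Lemma_complet_morph_atlas}.\ref{Lemma_complet_morph_atlas_1} gives the roof
\begin{equation*}
(X,\mathcal{A}_{1})\longleftarrow (X,\mathcal{A}_{1}\times _{\tau }\mathcal{B})\longrightarrow (Y,\mathcal{B}),
\end{equation*}
whose left leg lands in $(X,\mathcal{A}_{1})$, \emph{not} in $(X,\mathcal{A})$ as you wrote: the local models of $\mathcal{A}_{1}\times _{\tau }\mathcal{B}$ are $D^{1}_{k}\times D'_{\tau (k)}$ with $D^{1}_{k}$ a chart of $\mathcal{A}_{1}$, and the first projection maps to $D^{1}_{k}$. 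There is in general no morphism of locally embedded analytic spaces $(X,\mathcal{A}_{1}\times _{\tau }\mathcal{B})\rightarrow (X,\mathcal{A})$, because $id_{X}$-compliance of the coverings provides no holomorphic maps $D^{1}_{k}\times D'_{\tau (k)}\rightarrow D_{\sigma (k)}$ compatible with the embeddings --- this is precisely the obstruction the product-atlas device exists to circumvent, and your roof reintroduces it. Consequently the arrow $f_{\ast }\DolbA{A}{F}\rightarrow f_{\ast }\Dolb{\mathcal{A}_{1}\times _{\tau }\mathcal{B}\,}{\mathcal{F}}$ you want to invert is not defined at the level of complexes.

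The paper repairs this in two stages: first the $f$-compliant case, where $\mathcal{A}\times _{\tau }\mathcal{B}$ has local models $D_{i}\times D'_{\tau (i)}$ and hence does project onto $(X,\mathcal{A})$ itself; then the general case, where one inserts the intermediate atlas $\mathcal{A}'$ and composes the derived-category morphism $\DolbA{B}{G}\rightarrow f_{\ast }\Dolb{\mathcal{A}'\,}{\mathcal{F}}$ (from the compliant case over $f$) with the inverse of the derived-category isomorphism $f_{\ast }\DolbA{A}{F}\rightarrow f_{\ast }\Dolb{\mathcal{A}'\,}{\mathcal{F}}$ (from the compliant case over $id_{X}$, itself built from $\mathcal{A}'\times \mathcal{A}$). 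Alternatively you could form a triple product atlas with local models $D^{1}_{k}\times D_{\sigma (k)}\times D'_{\tau (k)}$, in the spirit of Lemma \ref{Lemma_complet_morph}.2, which admits projections to both targets. Two smaller points: uniqueness in (2) is immediate because $b':\mathcal{G}\rightarrow \DolbA{B}{G}$ is invertible in $D^{+}(\OX{Y})$, so $f^{\ast }(u)=f_{\ast }b\circ u\circ (b')^{-1}$ is forced --- your choice-independence argument addresses the well-definedness of the representative (the substance of (3)), not uniqueness among all morphisms making the square commute; and Corollary \ref{Corol_Atlas_compunere} concerns a composable chain $X\rightarrow Y\rightarrow Z$, so in (4) you still owe the reduction of the non-compliant case to the compliant one via the same two-stage device.
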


\begin{proof}
\textbf{1.} and \textbf{2.} are obvious.

\textbf{3. }Assume first that the embedding atlases $\mathcal{A}$, $\mathcal{%
B}$ are $f$-compliant. If $\tau $ is a refinement mapping, consider the
diagram of locally embedded analytic spaces (see Lemma \ref%
{Lemma_complet_morph_atlas} ): 
\begin{equation}
(X,\mathcal{A})\overset{P_{1}}{\longleftarrow }(X,\mathcal{A}\times _{\tau }%
\mathcal{B})\overset{P_{2}}{\longrightarrow }(Y,\mathcal{B})
\label{Diagr_tau}
\end{equation}%
and let 
\begin{equation}
f_{\ast }\DolbA{A}{F}\overset{f_{\ast }P_{1}^{\ast }(id)}
{\longrightarrow }f_{\ast }\Dolb{\mathcal{A}\times _{\tau }\mathcal{B}\,}{\mathcal{F}}\overset{P_{2}^{\ast }(u)}{\longleftarrow }\DolbA{B}{G}
\label{Diagr_Def_f*_deriv}
\end{equation}%
be the corresponding $\Dolbn$-diagram (i.e the diagram of pullback
morphisms over the arrows in diagram (\ref{Diagr_tau})),\ where 
\begin{equation*}
P_{1}=(id,id,p_{1})\text{ and }P_{2}=(f,\tau ,p_{2})
\end{equation*}%
Note that since the components of the $\Dolbn $-resolutions are soft
sheaves, Remark \ref{Rem_q_izo_over_id}\ implies that $f_{\ast }P_{1}^{\ast
}(id)$ is a quasi-isomorphism, and so diagram (\ref{Diagr_Def_f*_deriv})
gives a morphism in $D^{+}(\OX{Y})$ which coincides with $f^{\ast
}(u)$ (to see this use diagrams similar to (\ref{diagr_f*_u_deriv}) for the
morphisms in diagram (\ref{Diagr_Def_f*_deriv}))

\begin{remark}
The morphism given by diagram (\ref{Diagr_Def_f*_deriv}) does not depend on
the refinement mapping $\tau $(use, for instance, the $\Dolbn $-diagram over diagram 
(\ref{Diagr_tau12}) in Lemma \ref{Lemma_complet_morph_atlas}). Moreover let 
$\mathcal{A}^{\prime }$ be an embedding atlas on $X$ such that one has the diagram of 
locally embedded analytic spaces:%
\begin{equation*}
(X,\mathcal{A})\overset{(id,\upsilon ,q_{1})}{\longleftarrow }(X,\mathcal{A}%
^{\prime })\overset{(f,\tau \circ \upsilon ,q_{2})}{\longrightarrow }(Y,%
\mathcal{B})
\end{equation*}%
Then the corresponding $\Dolbn $-diagram is also a representative for 
$f^{\ast }(u)$ (use, for instance, the $\Dolbn $-diagram over
diagram (\ref{Diagr_A_prim}) in Lemma \ref{Lemma_complet_morph_atlas})\ 
\end{remark}

\begin{remark}
\label{Rem_izo_cat_deriv}Assume that $Y=X$, $f=id_{X}$, $\mathcal{G}=%
\mathcal{F}$, and $u=id$. Remark\nolinebreak\ \ref{Rem_q_izo_over_id}
implies $f^{\ast }(u)$ is an isomorphism in the derived category.
\end{remark}

Now drop the suplimentary assumption. Let $\mathcal{A}^{\prime }$ be an
embedding atlas of $X$ s.t. $\mathcal{A}^{\prime }$, $\mathcal{A}$ are $%
id_{X}$-compliant and $\mathcal{A}^{\prime }$, $\mathcal{B}$ are $f$%
-compliant (choose for instance an embedding atlas over the open covering 
$cov(\mathcal{A})\cap f^{-1}cov(\mathcal{B}))$. Then the $\Dolbn $%
-diagram in $D^{+}(\OX{Y})$
\begin{equation*}
f_{\ast }\DolbA{A}{F} \overset{f_{\ast }id^{\ast }(id)}
{\longrightarrow }f_{\ast }\Dolb{\mathcal{A}^{\prime }}
{\mathcal{F}}\overset{f^{\ast }(u)}{\longleftarrow }\DolbA{B}{G}
\end{equation*}%
determines a morphism:%
\begin{equation*}
\DolbA{B}{G} \rightarrow f_{\ast }\DolbA{A}{F}
\end{equation*}%
since, by Remark \ref{Rem_izo_cat_deriv}, the left-hand arrow is an
isomorphism. Moreover one checks, as in the f-compliant case, that this
morphism coincides with $f^{\ast }(u)$.

\textbf{4. }follows from the equality $g_{\ast }(u)\circ v=w$ by considering
the diagrams similar to diagram (\ref{diagr_f*_u_deriv}) for each morphism.
Alternatively, in the compliant case (i.e. if $\mathcal{A}$, $\mathcal{B}$
are $f$-compliant and $\mathcal{B}$, $\mathcal{C}$ are $g$-compliant) the
claim follows from the $\mathcal{D}olb$-diagram over diagram (\ref%
{Diagr_comut_morph_compoz})\ in Remark \ref{Rem_comut_morph_compoz}. The
general case reduces to the compliant one via isomorphisms.
\end{proof}

\refstepcounter{subsection}\textbf{\ \arabic{section}.\arabic{subsection} }%
\label{paragr_Reduced_space} \textbf{Dolbeault resolutions on reduced analytic 
spaces.} Let $X$ be a reduced analytic space. Using an
embedding atlas of a particular form one can construct a
Dolbeault-Grothendieck type resolution which coincides with the usual one on
the regular part of $X$. For this let $\mathcal{F}\in Mod(\OX{X})$,
let $(W_{j})_{j}$ be a neighbourhood basis of $Sing(X)$, and $%
(U_{i},k_{i},D_{i})_{i\in I}$ a family of embedding triples s.t. $%
(U_{i})_{i\in I}$ are open sets of $X$ which cover $Sing(X)$. Denote 
\begin{equation*}
S=Sing(X)\text{ and }U_{0}=Reg(X)
\end{equation*}%
The following Lemma is obvious:

\begin{lemma}
\begin{enumerate}
\item The family of charts $(U_{i},k_{i},D_{i})_{i\in I}$ together with the
embedding triple $(U_{0},id,U_{0})$ give an embedding atlas $\mathcal{A}$ of 
$X$.

\item For each $j\in J$, the family of embedding triples $(U_{i}\cap
W_{j},k_{i}|W_{j},D_{i}^{\prime })_{i\in I}$, where $D_{i}^{\prime }\subset
D_{i}$ is a suitable open subset, together with the embedding triple $%
(U_{0},id,U_{0})$ give an embedding atlas $\mathcal{A}^{(j)}$ of $X$.

\item $\Dolb{\mathcal{A}^{(j)}}{\mathcal{F}}$ does not depend on the
choice of the open subsets $D_{i}^{\prime }$.

\item If $W_{j_{1}}\subset W_{j_{2}}$ then there is a natural pullback
morphism \linebreak $i_{j_{1}j_{2}}^{\ast }(id):\Dolb{\mathcal{A}%
^{(j_{2})}}{\mathcal{F}}\rightarrow \Dolb{\mathcal{A}^{(j_{1})}}{\mathcal{F}}$
over the identity of $X$, and $(\Dolb{\mathcal{A}^{(j)}}{\mathcal{F}}%
)_{j}$ is an inductive system of complexes of $\OX{X}$-modules.
\end{enumerate}
\end{lemma}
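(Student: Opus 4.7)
The plan is to unpack the definitions from Sections \ref{Sect_Atlases} and \ref{Sect_Resol}; each of the four parts is routine, and I only expect part \textbf{3} to require any real argument.

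For parts \textbf{1} and \textbf{2}, I would simply verify Definition \ref{def_atlas}. In \textbf{1} the family $(U_i)_{i\in I}\cup\{U_0\}$ is an open cover of $X$ because $(U_i)_{i\in I}$ covers $Sing(X)=X\setminus U_0$, and each triple is an embedding triple by assumption. For \textbf{2} the explicit ``suitable'' $D_i'$ I have in mind is $D_i':=D_i\setminus k_i(U_i\setminus W_j)$: this is open in $D_i$ because $k_i$ is a closed embedding and $U_i\setminus W_j$ is closed in $U_i$, and then $k_i|_{U_i\cap W_j}\colon U_i\cap W_j\hookrightarrow D_i'$ is again a closed embedding.

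For part \textbf{3}, which is the only point requiring actual work, I would fix two admissible choices $D_i'$ and $D_i''$. Each contains $k_i(U_i\cap W_j)$ as a closed subspace, so on each single chart the sheaves $\mathcal{E}_{D_i'}^{0,q}/\mathcal{I}\mathcal{E}_{D_i'}^{0,q}$ and $\mathcal{E}_{D_i''}^{0,q}/\mathcal{I}\mathcal{E}_{D_i''}^{0,q}$ restrict along $k_i$ to the \emph{same} sheaves on $U_i\cap W_j$, since they differ only outside the embedded subspace. The identical reasoning applies to the products $D_\alpha$ attached to higher-length simplexes of the s.s.embedding triple associated with $\mathcal{A}^{(j)}$: pulling back via $k_\alpha$ discards any variation outside $k_\alpha(U_\alpha\cap W_j)$. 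Hence $\Dolb{\mathcal{A}^{(j)}}{\mathcal{F}}$ depends only on the data $(U_i\cap W_j,k_i|_{W_j})_{i}$ and on $\mathcal{F}$.

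For part \textbf{4}, given $W_{j_1}\subset W_{j_2}$, I would first arrange, using part \textbf{3} and the recipe of \textbf{2}, that $D_i^{(1)}\subset D_i^{(2)}$ for every $i$. The inclusions $U_i\cap W_{j_1}\hookrightarrow U_i\cap W_{j_2}$ and $D_i^{(1)}\hookrightarrow D_i^{(2)}$, together with the identity on the regular chart $U_0$, assemble into a morphism of locally embedded analytic spaces $(X,\mathcal{A}^{(j_1)})\to(X,\mathcal{A}^{(j_2)})$ over $id_X$, with $\tau=id_I$ and each $\tilde f_i$ the corresponding inclusion. Theorem \ref{Theor_Dolb}.\textbf{2} then furnishes the pullback morphism $i_{j_1j_2}^{\ast}(id)$, and the cocycle relations $i_{j_1j_2}^{\ast}(id)\circ i_{j_2j_3}^{\ast}(id)=i_{j_1j_3}^{\ast}(id)$ for chains $W_{j_1}\subset W_{j_2}\subset W_{j_3}$ follow at once from the compatibility triangle in Theorem \ref{Theor_Dolb}.\textbf{3}, yielding the inductive system.
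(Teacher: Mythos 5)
Your proposal is correct; the paper states this lemma without proof (it is declared ``obvious''), and your verification supplies exactly the intended argument: the locally-closed-set trick $D_i'=D_i\setminus k_i(U_i\setminus W_j)$ for parts \textbf{1}--\textbf{2}, the observation that $k_\alpha^{-1}$ only sees germs along the embedded subspace for part \textbf{3}, and Theorem \ref{Theor_Dolb} \textbf{2}--\textbf{3} applied to the inclusion morphisms of locally embedded analytic spaces over $id_X$ for part \textbf{4}. No gaps.
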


We set:%
\begin{equation}
r\DolbA{A}{F} = \lim\limits_{\overrightarrow{j}}%
\Dolb{\mathcal{A}^{(j)}}{\mathcal{F}}  \label{Def_Dolb_redus}
\end{equation}%
It is easy to see that the definition of $r\DolbA{A}{F}$ is independent of the neighbourhood basis $(W_{j})_{j}$.

\begin{corollary}
$r\Dolb{\mathcal{A}\,}{\bullet }$ is a functor $Mod(\OX{X})\rightarrow C^{+}(X)$. 
Moreover properties 
\linebreak
\textbf{1.a} - \textbf{1.e} in Theorem \ref{Theor_Dolb} hold 
for $\Dolb{\mathcal{A}\,}{\bullet }$ replaced by  $r\Dolb{\mathcal{A}\,}{\bullet }$.
\end{corollary}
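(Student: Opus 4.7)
The plan is to derive each property of $r\Dolb{\mathcal{A}\,}{\bullet}$ from the corresponding property of $\Dolb{\mathcal{A}^{(j)}\,}{\bullet}$, already established in Theorem~\ref{Theor_Dolb}, by invoking standard facts about filtered colimits: they are exact in $Mod(\OX{X})$, they commute with tensor products, and they preserve flatness. Functoriality comes first: any morphism $\varphi:\mathcal{F}\to\mathcal{G}$ induces morphisms $\Dolb{\mathcal{A}^{(j)}\,}{\varphi}$ that are compatible with the transition maps $i^{\ast}_{j_{1}j_{2}}(id)$ by the naturality clause of Theorem~\ref{Theor_Dolb}.2 applied over $id_{X}$; passing to the colimit defines $r\Dolb{\mathcal{A}\,}{\varphi}$. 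Boundedness below is preserved, so $r\Dolb{\mathcal{A}\,}{\bullet}$ indeed lands in $C^{+}(X)$.

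For 1.a, given a short exact sequence $0\to\mathcal{F}_{1}\to\mathcal{F}_{2}\to\mathcal{F}_{3}\to 0$, apply $\Dolb{\mathcal{A}^{(j)}\,}{\bullet}$ at each level (exact by Theorem~\ref{Theor_Dolb}.1.a) and take $\varinjlim_{j}$; exactness of filtered colimits yields exactness of the resulting sequence of complexes. For 1.b, the augmentations $\mathcal{F}\to\Dolb{\mathcal{A}^{(j)}\,}{\mathcal{F}}$ are quasi-isomorphisms compatible with the transition maps; since $\varinjlim_{j}\mathcal{F}=\mathcal{F}$ and filtered colimits commute with cohomology, the induced map $\mathcal{F}\to r\DolbA{A}{F}$ is a quasi-isomorphism. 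For 1.d, flatness is preserved under filtered colimits stalk by stalk (a classical module-theoretic fact), so each $r\Dolb{\mathcal{A}\,}{\OX{X}}^{q}=\varinjlim_{j}\Dolb{\mathcal{A}^{(j)}\,}{\OX{X}}^{q}$ is $\OX{X}$-flat. For 1.e, tensor product commutes with filtered colimits, giving
\begin{equation*}
r\Dolb{\mathcal{A}\,}{\OX{X}}\otimes_{\OX{X}}\mathcal{F}
=\varinjlim_{j}\bigl(\Dolb{\mathcal{A}^{(j)}\,}{\OX{X}}\otimes_{\OX{X}}\mathcal{F}\bigr)
\longrightarrow \varinjlim_{j}\Dolb{\mathcal{A}^{(j)}\,}{\mathcal{F}}
= r\DolbA{A}{F},
\end{equation*}
a filtered colimit of quasi-isomorphisms (isomorphisms, when $\mathcal{F}$ is coherent), hence a quasi-isomorphism (isomorphism).

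The main obstacle is property 1.c, the softness of the components of $r\DolbA{A}{F}$, since softness is not automatically preserved by filtered colimits on a general topological space. The approach I would take is to exploit the concrete form of the complex: each term $b_{\alpha*}\Dolb{k_{\alpha}\,}{\mathcal{F}|U_{\alpha}}^{q}$ appearing in (\ref{Complex_Dolb_A}) carries the structure of a module over the soft sheaf of rings $b_{\alpha*}(\mathcal{E}_{D_{\alpha}}/\mathcal{I}_{U_{\alpha}}\mathcal{E}_{D_{\alpha}})$ of smooth functions on the chart, and the transition maps $i^{\ast}_{j_{1}j_{2}}(id)$ respect this structure. A filtered colimit of modules over a soft sheaf of rings (on a paracompact, in particular locally compact Hausdorff, space) inherits the module structure over that sheaf of rings and is therefore soft, since any module over a soft sheaf of rings is soft. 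Since complex analytic spaces are locally paracompact and softness can be checked on a paracompact open cover, this yields the softness of $r\DolbA{A}{F}^{q}$.
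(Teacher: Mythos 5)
Your proposal is correct, and for everything except softness it follows the paper's own (one\nobreakdash-sentence) argument: functoriality and properties \textbf{1.a}, \textbf{1.b}, \textbf{1.d}, \textbf{1.e} pass to the limit because exactness, cohomology, stalks, tensor products and flatness all commute with filtered inductive limits. The one point where you genuinely diverge is \textbf{1.c}, and there the two arguments are different. The paper identifies the limit sheaf explicitly: since the charts of $\mathcal{A}^{(j)}$ other than $(U_{0},id,U_{0})$ live on $U_{i}\cap W_{j}$ with $(W_{j})_{j}$ a neighbourhood basis of $S=Sing(X)$, the inductive limit of the corresponding terms is the Godement restriction to the closed set $S$ (i.e.\ $i_{S\ast}i_{S}^{-1}$ applied to the soft terms of $\DolbA{A}{F}$), and restriction of a soft sheaf to a closed subset, pushed back into $X$, is again soft. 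You instead avoid computing the limit and invoke the criterion that a sheaf of modules over a soft sheaf of rings on a paracompact space is soft, applied to the colimit of the inductive system. Both routes work; yours does not require identifying the limit and would apply to more general inductive systems of $\Dolbn$-complexes, whereas the paper's explicit description of the limit as a sheaf concentrated on $S$ away from $U_{0}$ is what makes the next corollary (that $r\Dolb{\mathcal{A}\,}{\OX{X}}|U_{0}$ is the classical resolution) transparent. One detail you should tighten: the soft sheaf of rings you name, $b_{\alpha\ast}(\mathcal{E}_{D_{\alpha}}^{0,0}/\mathcal{I}_{U_{\alpha}}\mathcal{E}_{D_{\alpha}}^{0,0})$, varies with $j$ because the ambient manifolds shrink along the system; to make "the transition maps respect this structure" literally true you should fix one ring once and for all --- the one attached to the initial atlas $\mathcal{A}$, or simply the sheaf $\mathcal{E}_{X}^{0,0}$ of smooth functions on $X$ --- and observe that every stage of the system is a module over it via the restriction homomorphisms of rings, the transition maps (pullbacks of forms) being semilinear over those restrictions and hence linear over the fixed ring.
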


\begin{proof}
\textbf{1.a}, \textbf{1.b}, \textbf{1.d}, \textbf{1.e} follow immediately
from the respective properties in Theorem \ref{Theor_Dolb} because of the
compatibility with inductive limits. For \textbf{1.c }note that the terms of 
$r\DolbA{A}{F}$ consist of Godement restrictions to the closed set $S$ of soft sheaves, and consequently are also soft.
\end{proof}

\begin{corollary}
$r\Dolb{\mathcal{A}\,}{\OX{X}}|U_{0}$ coincides with the Dolbeault-Grothendieck resolution on the manifold $U_{0}$.
\end{corollary}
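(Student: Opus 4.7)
The plan is to show that on $U_0$, passing to the direct limit over the neighborhood basis $(W_j)_j$ kills every contribution except that of the single chart $(U_0,id,U_0)$, which by itself produces the classical Dolbeault-Grothendieck resolution on $U_0$.

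First, I recall from \eqref{Complex_Dolb_A} that $\Dolb{\mathcal{A}^{(j)}\,}{\OX{X}}$ is the simple complex associated to a double complex whose $p$-th column is a product, indexed by simplices $\alpha$ of the nerve of $cov(\mathcal{A}^{(j)})$ with $|\alpha|=p$, of sheaves of the form $b_{\alpha\ast}\Dolb{k_\alpha\,}{\OX{X}|U_\alpha^{(j)}}$. The covering $cov(\mathcal{A}^{(j)})$ consists of $U_0=\text{Reg}(X)$ together with the sets $U_i\cap W_j$ for $i\in I$; I therefore split the simplices into two classes, namely the singleton $\{0\}$ and every other simplex, observing that a simplex in the second class must contain some $i\in I$, so that $U_\alpha^{(j)}\subset U_i\cap W_j\subset W_j$.

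Next, I fix $x\in U_0$. Because $(W_j)_j$ is a neighborhood basis of $S=\text{Sing}(X)$ and $x\notin S$, there exists $j_0$ with $x\notin W_j$ for all $j\ge j_0$. Hence for every simplex $\alpha\neq\{0\}$ the stalk at $x$ of $b_{\alpha\ast}\Dolb{k_\alpha\,}{\OX{X}|U_\alpha^{(j)}}$ is zero as soon as $j\ge j_0$. Since filtered direct limits commute with stalks, all such summands vanish in $(r\Dolb{\mathcal{A}\,}{\OX{X}})_x$, and only the $\{0\}$-column survives. That column is exactly $\Dolb{k_0\,}{\OX{X}|U_0}=\Dolb{U_0\,}{\OX{U_0}}$, whose stalk at $x$ coincides with the usual Dolbeault-Grothendieck complex at $x$.

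Finally, I would check that the transition morphisms $i^{\ast}_{j_1j_2}(id)$ in the inductive system act as the identity on the $\{0\}$-summand and only shrink the supports of the summands indexed by the other simplices; this ensures that the stalk-wise identification is compatible with restriction and so assembles to an identification of complexes of sheaves on $U_0$, with the surviving differential being $\overline{\partial}$. The main (and essentially only) step to verify is that every horizontal \v{C}ech-type differential has image landing in summands indexed by simplices $\alpha\neq\{0\}$, so that it vanishes in the limit stalk-wise on $U_0$. This verification is pure bookkeeping, driven by the single geometric fact $U_0\cap S=\emptyset$, and is the one place where a small amount of care is required.
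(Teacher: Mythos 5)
Your overall route is the right one and is, as far as one can tell, precisely the argument the paper leaves implicit (the corollary is stated there without proof): every simplex of the nerve other than $\{0\}$ contains some $i\in I$, hence its associated open set lies in $U_i\cap W_j\subset W_j$, these contributions die in the direct limit when restricted to $U_0$, and what survives is the single chart $(U_0,id,U_0)$, whose $\Dolbn$-complex is the classical resolution by property \textbf{1.f}.

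One step, however, is stated incorrectly. From $x\notin W_j$ you cannot conclude that the stalk at $x$ of $b_{\alpha\ast}\Dolb{k_{\alpha}\,}{\OX{X}|U_{\alpha}^{(j)}}$ vanishes: the stalk of a direct image under an open inclusion $b_{\alpha}:U_{\alpha}^{(j)}\hookrightarrow X$ is $\varinjlim_{V\ni x}\Gamma\bigl(V\cap U_{\alpha}^{(j)},\cdot\bigr)$, which is in general nonzero at any $x$ in the closure of $U_{\alpha}^{(j)}$ (germs at $0$ of smooth functions on $(0,1)$ give a one-dimensional illustration). What you need is a $j_0$ such that $W_{j_0}$ is disjoint from an entire neighbourhood $V$ of $x$; then $V\cap U_{\alpha}^{(j)}=\emptyset$ for all $W_j\subset W_{j_0}$ and the stalk is genuinely zero from that point on. This is available: $X$ is locally compact, $S=Sing(X)$ is closed and $x\notin S$, so one may choose $V\ni x$ relatively compact with $\overline{V}\cap S=\emptyset$; then $X\setminus\overline{V}$ is an open neighbourhood of $S$ and therefore contains some $W_{j_0}$, since $(W_j)_j$ is a neighbourhood basis of $S$. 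With this correction the rest of your argument --- survival of only the $\{0\}$-column, vanishing in the limit of every \v{C}ech-type differential leaving it, and compatibility with the transition morphisms $i^{\ast}_{j_1j_2}(id)$ --- goes through as you describe.
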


\begin{remark}
Assume $\mathcal{F}\in Coh(\OX{X})$. Then the topologies on the
global sections spaces $\Gamma (X,r\Dolb{\mathcal{A}\,}{\OX{X}})$ are more complicated than the FS topologies of Corollary \ref{Corol_topol_FS}. However, since the natural quasi-isomorphism 
\begin{equation}
\Gamma (X,\DolbA{A}{F})\rightarrow \Gamma (X,r\DolbA{A}{F})
\end{equation}%
is continuous, both complexes induce the same topology on the cohomology
groups $H^{\bullet }(X,\mathcal{F})$.
\end{remark}

\begin{remark}
If $f:(X,\mathcal{A})\rightarrow (Y,\mathcal{B})$ is a morphism of locally embedded
analytic spaces, $\mathcal{G}\in Mod(\OX{Y})$ and $u:\mathcal{%
G}\rightarrow f_{\ast }\mathcal{F}$ a morphism of $\OX{Y}$-modules
then one has a pullback morphism 
\begin{equation}
f^{\ast }(u):\DolbA{B}{G}\rightarrow f_{\ast }r\DolbA{A}{F}
\end{equation}%
but, in general, not a morphism 
\begin{equation}
r\DolbA{B}{G} \rightarrow f_{\ast }r\DolbA{A}{F}
\end{equation}%
when $Y$ is also a reduced analytic space.
\end{remark}

\refstepcounter{subsection} \textbf{\arabic{section}.\arabic{subsection}} %
\label{paragr_deRham} \textbf{The functor }$\Dolbn $\textbf{\ and the
de Rham complex on analytic spaces}. Let $X$ be an analytic space and%
\begin{equation}
0\longrightarrow \Omega_{X}^{0}\overset{{\partial }_{X}^{0i}}{%
\longrightarrow }\Omega_{X}^{1}\overset{{\partial }_{X}^{1}}{%
\longrightarrow }...
\end{equation}%
be the de Rham complex on $X$ (see e.g. H.Grauert, H.Kerner \cite{Grauert}\
or A.Grothendieck \cite{Groth}). Recall that if $k:X\hookrightarrow D$ is a
closed embedding of $X$ in the complex manifold $D$, and $\mathcal{I}%
_{X}\subset \OX{D}$ is the coherent ideal sheaf which gives $X$ as
a subspace of $D$, then 
\begin{equation}
\Omega _{X}^{i}=\Omega _{D}^{i}/\mathcal{N}_{X}^{i}
\end{equation}%
where $\mathcal{N}_{X}^{i}$ is the $\OX{D}$-submodule of $\Omega
_{D}^{i}$ generated by $\mathcal{I}_{X}\Omega _{X}^{i}$ and ${%
\partial }_{D}^{i-1}(\mathcal{I}_{X}\Omega _{X}^{i-1})$, and the
differentials 
\begin{equation}
\partial _{X}^{i}:\Omega _{X}^{i}\rightarrow \Omega _{X}^{i+1}
\end{equation}%
are induced by those of $\Omega _{D}^{\bullet }$. To simplify notation in
what follows we shall write $\partial $ instead of ${%
\partial }_{X}^{i}$ if $i$ and $X$ are clear from the context.

If $f:X\rightarrow Y$ is a morphism of analytic spaces, one has a pullback
morphism: 
\begin{equation}
f^{\ast }:\Omega _{Y}^{\bullet }\rightarrow f_{\ast }\Omega _{X}^{\bullet }
\label{Morph_pullback_an_sp}
\end{equation}%
In particular, if $f:(X,i_{1},D_{1})\rightarrow (Y,i_{2},D_{2})$ is a
morphism of embedding triples then the morphism (\ref{Morph_pullback_an_sp})
is induced by the usual pullback morphism 
\begin{equation}
\Omega _{D_{2}}^{\bullet }\rightarrow f_{\ast }\Omega _{D_{1}}^{\bullet }
\end{equation}%
since one checks that $f^{\ast }(\mathcal{N}_{Y}^{i})\subset \mathcal{N}%
_{X}^{i}$ for all $i$.

\begin{remark}
\label{Rem_ss_de_Rham}If $\mathscr{X}=(X_{\alpha })_{\alpha }$ is a
s.s.analytic space then the functoriality of the pullback morphisms (\ref%
{Morph_pullback_an_sp}) implies that $\Omega _{\mathscr{X}}^{\bullet
}=(\Omega _{X_{\alpha }}^{\bullet },\partial )_{\alpha }$ is a complex of $%
\mathscr{X}$-modules with $\mathbb{C}$-linear differentials.
\end{remark}

\begin{theorem}
\label{Theor_Dolb_deRham}Let $(X,\mathcal{A})$ be a locally embedded
analytic space.

\begin{enumerate}
\item The differential ${\partial }_{X}^{i}:\Omega
_{X}^{i}\rightarrow \Omega _{X}^{i+1}$ induces a $\mathbb{C}$-linear
morphism of resolutions:%
\begin{equation}
{\partial }^{i}:\Dolb{\mathcal{A}\,}{\Omega_{X}^{i}}\rightarrow \Dolb{\mathcal{A}\,}{\Omega _{X}^{i+1}}
\label{Morpf_Dolb_de_Rham_A}
\end{equation}%
such that $\Dolb{\mathcal{A}\,}{\Omega _{X}^{\bullet }}$ is a double
complex

\item The simple complex associated to $\Dolb{\mathcal{A}\,}{\Omega
_{X}^{\bullet }}$ is a resolution of $\Omega _{X}^{\bullet }$ with soft, 
$\OX{X}$-flat sheaves
\end{enumerate}
\end{theorem}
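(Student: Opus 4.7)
The natural strategy is to mimic the three-layer construction of Section \ref{Sect_Resol}: define the operator $\partial^{i}$ first on an embedding triple, then promote it to s.s.~embedding triples using naturality, and finally transfer it to the locally embedded analytic space via the $b_{\sharp}$ functor.

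Fix first an embedding triple $(X,i,D)$. Since $\mathcal{E}_{D}^{0,q}$ is $\OX{D}$-flat, the $q$-th term of $\Dolb{i\,}{\Omega_{X}^{p}}$ is (the restriction to $X$ of) $\mathcal{E}_{D}^{p,q}/(\mathcal{E}_{D}^{0,q}\cdot\mathcal{N}_{X}^{p})$. The holomorphic exterior differential $\partial:\mathcal{E}_{D}^{p,q}\to\mathcal{E}_{D}^{p+1,q}$ is $\mathbb{C}$-linear and anticommutes with $\bar{\partial}$. The key technical point is that $\partial$ descends to the quotient; this reduces, via $\partial(\omega\wedge\alpha)=\partial\omega\wedge\alpha+(-1)^{p}\omega\wedge\partial\alpha$ for $\omega\in\mathcal{N}_{X}^{p}$, $\alpha\in\mathcal{E}_{D}^{0,q}$, to the two containments $\partial\mathcal{N}_{X}^{p}\subset\mathcal{N}_{X}^{p+1}$ (immediate from the definition of $\mathcal{N}_{X}^{\bullet}$) and $\mathcal{N}_{X}^{p}\wedge\Omega_{D}^{1}\subset\mathcal{N}_{X}^{p+1}$ (checked separately on the two types of generators of $\mathcal{N}_{X}^{p}$, the second using the identity $df\wedge\eta=\partial(f\eta)-f\partial\eta$). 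The induced $\partial^{i}$ anticommutes with $\bar{\partial}$ on the quotient, giving the double complex structure in the embedded case.

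To extend to the s.s.~embedding triple $(\mathscr{X},k,\mathscr{D})$, I would use that $\partial$ on $\mathcal{E}^{p,q}$ is natural under holomorphic pullbacks (as is $\bar{\partial}$); hence the componentwise $\partial^{i}$'s are compatible with the connecting maps and assemble into a morphism of complexes of $\mathscr{X}$-modules $\Dolb{k\,}{\Omega_{\mathscr{X}}^{i}}\to\Dolb{k\,}{\Omega_{\mathscr{X}}^{i+1}}$ (cf.~Remark \ref{Rem_ss_de_Rham}). Applying $b_{\sharp}$ as in (\ref{Def_Dolb_A}) finally yields $\partial^{i}:\Dolb{\mathcal{A}\,}{\Omega_{X}^{i}}\to\Dolb{\mathcal{A}\,}{\Omega_{X}^{i+1}}$, finishing Part \textbf{1}.

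For Part \textbf{2}, Theorem \ref{Theor_Dolb}\ \textbf{1.b} asserts that each column of the bicomplex $\Dolb{\mathcal{A}\,}{\Omega_{X}^{\bullet}}$ is a resolution of $\Omega_{X}^{p}$, so a standard spectral-sequence (or filtration) argument identifies the total complex, in the derived category, with the complex $\Omega_{X}^{\bullet}$; the augmentations $\Omega_{X}^{p}\hookrightarrow\Dolb{\mathcal{A}\,}{\Omega_{X}^{p}}^{0}$ assemble into the explicit quasi-isomorphism. Softness of each term of the total complex is inherited termwise from Theorem \ref{Theor_Dolb}\ \textbf{1.c}, since finite direct sums of soft sheaves are soft. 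For the $\OX{X}$-flatness claim I would reduce to a termwise statement: each term is a finite direct sum of sheaves of the form $b_{\alpha\ast}\bigl(\mathcal{E}_{D_{\alpha}}^{p,q}/\mathcal{E}_{D_{\alpha}}^{0,q}\cdot\mathcal{N}_{X}^{p}\bigr)$, and flatness should follow by repeating the diagrammatic argument used to prove Theorem \ref{Theor_Dolb}\ \textbf{1.d} (using the exactness of $\Dolb{\mathcal{A}\,}{\bullet}$ and Malgrange's flatness of $\mathcal{E}_{D_{\alpha}}^{\bullet,\bullet}$ over $\OX{D_{\alpha}}$), applied to the coherent sheaf $\Omega_{X}^{p}$. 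The main obstacle I expect is the first step: verifying that $\partial$ genuinely descends to the quotient defining $\Dolb{i\,}{\Omega_{X}^{\bullet}}$ and that the resulting $\partial^{i}$ continues to anti-commute with $\bar{\partial}$ after being carried through all three layers; once this is in hand, the remaining assertions follow essentially formally from the machinery of Section \ref{Sect_Resol}.
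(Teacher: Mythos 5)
For Part \textbf{1} your route is the paper's route: establish $\partial$ on an embedding triple by checking that the exterior holomorphic differential descends modulo $\mathcal{E}_{D}^{0,q}\cdot\mathcal{N}_{X}^{p}$, observe compatibility with holomorphic pullbacks to pass to the associated s.s.\ embedding triple, and apply $b_{\sharp}$. The paper merely asserts the containment $\partial (\mathcal{E}_{D}^{0,j}\otimes _{\OX{D}}\mathcal{N}_{X}^{i})\subset \mathcal{E}_{D}^{0,j}\otimes _{\OX{D}}\mathcal{N}_{X}^{i+1}$; your reduction to $\partial \mathcal{N}_{X}^{p}\subset \mathcal{N}_{X}^{p+1}$ and $\Omega _{D}^{1}\wedge \mathcal{N}_{X}^{p}\subset \mathcal{N}_{X}^{p+1}$, checked on the two types of generators via $df\wedge \eta =\partial (f\eta )-f\partial \eta$, is exactly the verification the paper leaves to the reader, and it is correct. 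Your treatment of the resolution property and of softness in Part \textbf{2} (spectral sequence of the double complex plus Theorem \ref{Theor_Dolb} \textbf{1.b}, \textbf{1.c}) is also fine; the paper declares this part obvious.

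The genuine gap is your flatness argument in Part \textbf{2}. The diagrammatic proof of Theorem \ref{Theor_Dolb} \textbf{1.d} cannot simply be ``applied to the coherent sheaf $\Omega _{X}^{p}$'': that proof works because the lower row $0\rightarrow \Dolb{\mathcal{A}\,}{\mathcal{I}}\rightarrow \Dolb{\mathcal{A}\,}{\OX{X}}$ is obtained by applying the exact functor $\Dolb{\mathcal{A}\,}{\bullet }$ to the \emph{injection} $\mathcal{I}\hookrightarrow \OX{X}$. Replacing $\OX{X}$ by $\Omega _{X}^{p}$, the relevant map $\Omega _{X}^{p}\otimes _{\OX{X}}\mathcal{I}\rightarrow \Omega _{X}^{p}$ is in general not injective (its kernel is a $Tor_{1}$), so exactness of $\Dolb{\mathcal{A}\,}{\bullet }$ gives nothing. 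Indeed, by \textbf{1.e} one has $\Dolb{\mathcal{A}\,}{\Omega _{X}^{p}}\simeq \Dolb{\mathcal{A}\,}{\OX{X}}\otimes _{\OX{X}}\Omega _{X}^{p}$, and a flat module tensored with $\Omega _{X}^{p}$ is flat only when $\Omega _{X}^{p}$ itself is. A concrete test case is the double point $X=V(z^{2})\subset \mathbb{C}$: there $\mathcal{N}_{X}^{1}=(z)\,dz$, so $\Omega _{X}^{1}\simeq \OX{X}/(z)$ and the degree-$(1,0)$ term of the double complex is $\mathcal{E}^{0,0}/z\mathcal{E}^{0,0}$, which is annihilated by $z$ and hence not flat over $\mathbb{C}\{z\}/(z^{2})$. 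So either the flatness assertion of Part \textbf{2} must be read as referring only to the sheaves $\Dolb{\mathcal{A}\,}{\OX{X}}^{q}$ out of which the resolution is built, or a genuinely different argument (not a reduction to \textbf{1.d}) is required; as written, your step ``flatness should follow by repeating the diagrammatic argument'' would fail.
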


\begin{proof}
\textbf{1. }The morphism of resolutions is obtained by following the
construction of the functor $\Dolb{\mathcal{A}\,}{\bullet }$ in
Section \ref{Sect_Resol}.

\textbf{a. (Smooth case)} Let $D$ be a complex manifold and consider the
morphism of resolutions: 
\begin{equation}
\begin{tikzcd}[column sep=1.5cm] 0 \arrow{r} & \Omega^{i+1}_{D} \arrow{r} &
\mathcal{E}_{D}^{i+1,0} \arrow{r} & \mathcal{E}_{D}^{i+1,1} \arrow{r} &
\ldots \\ 0 \arrow{r} & \Omega^{i}_{D} \arrow{r} \arrow{u}{\partial } &
\mathcal{E}_{D}^{i,0} \arrow{r} \arrow{u}{\partial } & \mathcal{E}_{D}^{i,1}
\arrow{r} \arrow{u}{\partial } & \ldots \end{tikzcd}
\end{equation}%
Using the natural isomorphisms:%
\begin{equation}
\mathcal{E}_{D}^{0,j}\otimes _{\OX{D}}\Omega _{D}^{i}\widetilde{%
\rightarrow }\mathcal{E}_{D}^{i,j}
\end{equation}%
one gets a morphism of resolutions: $\Dolb{D\,}{\Omega _{D}^{i}}\rightarrow \Dolb{D\,}{\Omega _{D}^{i+1}}$. Obviously
\linebreak
${\partial }^{i+1}\circ {\partial }^{i}=0$ and $\Dolb{D\,}{\Omega _{D}^{\bullet }}$ 
is a double complex.

Note that if $(z_{1},\ldots ,z_{n})$ are local coordinates on $D$ then the
morphism 
\begin{equation}
\partial ^{i}:\mathcal{E}_{D}^{0,j}\otimes _{\OX{D}}\Omega
_{D}^{i}\rightarrow \mathcal{E}_{D}^{0,j}\otimes _{\OX{D}}\Omega
_{D}^{i+1}
\end{equation}%
is given by 
\begin{equation}
\partial ^{i}(\alpha \otimes \omega )=\sum_{k=1}^{n}\frac{\partial }{%
\partial z_{k}}(\alpha )\otimes dz_{k}\wedge \omega +(-1)^{j}\alpha \otimes
\partial \omega
\end{equation}

\textbf{b. (Embedded case)} If $(X,k,D)$ is an embedding triple then one
checks that 
\begin{equation}
\partial (\mathcal{E}_{D}^{0,j}\otimes _{\OX{D}}\mathcal{N}%
_{X}^{i})\subset \mathcal{E}_{D}^{0,j}\otimes _{\OX{D}}\mathcal{N}%
_{X}^{i+1}
\end{equation}%
and consequently one obtains a morphism of resolutions:%
\begin{equation}
\partial :\Dolb{k\,}{\Omega _{X}^{i}} \rightarrow \Dolb{k\,}
{\Omega _{X}^{i+1}}  \label{Morph_Dolb_de_Rham}
\end{equation}%
and $\Dolb{k\,}{\Omega _{X}^{\bullet }}$ becomes a double complex. 
Moreover the differentials (\ref{Morph_Dolb_de_Rham}) are compatible with
the pullback morphisms.

\textbf{c. (General case)} Let $(\mathfrak{U},k,\mathfrak{D})$ be the
s.s.embedding triple associated to $(X,\mathcal{A})$ and $b:\mathfrak{U}%
\rightarrow X$ the natural morphism given by the inclusions. The morphisms (%
\ref{Morph_Dolb_de_Rham}) give a morphism of resolutions 
\begin{equation}
\Dolb{k\,}{\Omega _{X}^{i}|\mathfrak{U}}\rightarrow \Dolb{k\,}{\Omega_{X}^{i+1}|\mathfrak{U}}
\end{equation}
and by applying $b_{\sharp }$ the morphism (\ref{Morpf_Dolb_de_Rham_A}). It
is immediate to check that $\Dolb{\mathcal{A}\,}{\Omega _{X}^{\bullet}}$ is a double complex.

\textbf{2. }is obvious.
\end{proof}

\refstepcounter{subsection} \textbf{\arabic{section}.\arabic{subsection}}%
\label{Paragr_smooth_forms} \textbf{The functor }$\Dolbn$ \textbf{%
and the complex of smooth differential forms}

Let $X$ be an analytic space and let%
\begin{equation}
0\longrightarrow \mathcal{E}_{X}^{0,0}\overset{\overline{\partial }_{X}^{0}}{%
\longrightarrow }\mathcal{E}_{X}^{0,1}\overset{\overline{\partial }_{X}^{1}}{%
\longrightarrow }...
\end{equation}%
be the complex of smooth differential forms with first degree $0$ on $X$.
Recall that if $k:X\hookrightarrow D$ is a closed embedding of $X$ in the
complex manifold $D$, and $\mathcal{I}_{X}\subset \OX{D}$ is the
coherent ideal sheaf which gives $X$ as a subspace of $D$, then 
\begin{equation}
\mathcal{E}_{X}^{0,i}=\mathcal{E}_{D}^{0,i}/\mathcal{M}_{X}^{i}|X
\end{equation}%
where $\mathcal{M}_{X}^{i}$ is the $\OX{D}$-submodule of $\mathcal{E%
}_{D}^{0,i}$ generated by $\mathcal{I}_{X}\mathcal{E}_{D}^{0,i}$, $\overline{%
\mathcal{I}}_{X}\mathcal{E}_{D}^{0,i}$, and \linebreak $\overline{%
\partial }^{i-1}(\overline{\mathcal{I}}_{X}\mathcal{E}_{D}^{0,i-1})$; the
differentials 
\begin{equation}
\overline{\partial }_{X}^{i}:\mathcal{E}_{X}^{0,i}\rightarrow 
\mathcal{E}_{X}^{0,i+1}
\end{equation}%
are induced by those of $\mathcal{E}_{D}^{0,\bullet }$. If $X$ is a reduced
analytic space $X$ then $\mathcal{M}_{X}^{i}$ consists of the forms in $%
\mathcal{E}_{D}^{0,i}$ which have null pullback to $Reg(X)$. To simplify
notation, in what follows we shall write $\overline{\partial }$
instead of $\overline{\partial }_{X}^{i}$ if $i$ and $X$ are clear
from the context.

Note that the natural surjective morphisms%
\begin{equation}
\mathcal{E}_{D}^{0,i}/\mathcal{I}_{X}\mathcal{E}_{D}^{0,i}|X 
\rightarrow \mathcal{E}_{X}^{0,i}
\end{equation}%
determine a natural morphism of complexes:%
\begin{equation}
\Dolb{k\,}{\OX{X}}\rightarrow \mathcal{E}_{X}^{0,\bullet }
\end{equation}%
In general one proves:

\begin{theorem}
\label{Theor_Dolb_diff_forms}Let $(X,\mathcal{A})$ be a locally embedded
analytic space. Then there is a surjective morphism of complexes of sheaves%
\begin{equation}
\Dolb{\mathcal{A}\,}{\OX{X}}\rightarrow \mathcal{C}^{\bullet
}(\mathcal{U},\mathcal{E}_{X}^{0,\bullet })  \label{Morph_E_X}
\end{equation}%
where $\mathcal{U} = cov(\mathcal{A})$ is the open covering of $X$ 
corresponding to the atlas $\mathcal{A}$ and $\mathcal{C}^{\bullet }(%
\mathcal{U},\bullet )$ denotes the \v{C}ech complex on $\mathcal{U}$.
\end{theorem}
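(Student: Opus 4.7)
The plan is to build the morphism in stages that parallel the construction of $\Dolb{\mathcal{A}\,}{\bullet }$ in Section \ref{Sect_Resol}: first chart by chart at the level of embedding triples, then compatibly over the s.s.embedding triple $(\mathfrak{U},k,\mathfrak{D})$ associated to $(X,\mathcal{A})$, and finally push forward via $b_{\sharp }$, using Example \ref{Ex_Cech_complex} (applied termwise to the complex $\mathcal{E}_{X}^{0,\bullet }$) to identify $b_{\sharp}(\mathcal{E}_{X}^{0,\bullet}|\mathfrak{U})$ with the target $\mathcal{C}^{\bullet }(\mathcal{U},\mathcal{E}_{X}^{0,\bullet })$.

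First, for each chart $(U_{\alpha },k_{\alpha },D_{\alpha })$, recall from paragraph \ref{paragr_emb_case} that $\Dolb{k_{\alpha }\,}{\OX{X}|U_{\alpha }}$ is the restriction to $U_{\alpha }$ of $\mathcal{E}_{D_{\alpha}}^{0,\bullet }/\mathcal{I}_{U_{\alpha }}\mathcal{E}_{D_{\alpha }}^{0,\bullet }$, while $\mathcal{E}_{X}^{0,\bullet }|U_{\alpha }$ is by definition the restriction of $\mathcal{E}_{D_{\alpha }}^{0,\bullet }/\mathcal{M}_{U_{\alpha }}^{\bullet }$. Since $\mathcal{I}_{U_{\alpha }}\mathcal{E}_{D_{\alpha }}^{0,i}\subseteq \mathcal{M}_{U_{\alpha }}^{i}$ by construction and $\overline{\partial }(\mathcal{I}_{U_{\alpha }}\mathcal{E}_{D_{\alpha }}^{0,i-1})\subseteq \mathcal{M}_{U_{\alpha }}^{i}$ as well, the natural quotient gives a surjective morphism of complexes
\begin{equation*}
\pi_{\alpha}:\Dolb{k_{\alpha }\,}{\OX{X}|U_{\alpha }}\longrightarrow \mathcal{E}_{X}^{0,\bullet }|U_{\alpha }.
\end{equation*}

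Next, I would verify that the family $(\pi_{\alpha})_{\alpha}$ is compatible with the connecting morphisms, so as to assemble into a morphism of complexes of $\mathfrak{U}$-modules $\Dolb{k\,}{\OX{X}|\mathfrak{U}}\to \mathcal{E}_{X}^{0,\bullet }|\mathfrak{U}$. For $\alpha \subset \beta $ in $\mathcal{N}(\mathcal{U})$, the connecting morphism on the source is induced by the pullback of forms along the projection $p_{\alpha \beta }:D_{\beta }\to D_{\alpha }$ of the associated s.s.complex manifold $\mathfrak{D}$, while on the target it is simply the sheaf restriction $\mathcal{E}_{X}^{0,\bullet }|U_{\alpha }\to i_{\alpha \beta \ast }(\mathcal{E}_{X}^{0,\bullet }|U_{\beta })$. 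The required commutativity holds because, for $\omega \in \mathcal{E}_{D_{\alpha}}^{0,i}$, first pulling back via $p_{\alpha \beta }$ and then projecting to $\mathcal{E}_{X}^{0,i}|U_{\beta }$ agrees with first projecting $\omega $ to $\mathcal{E}_{X}^{0,i}|U_{\alpha }$ and restricting to $U_{\beta }$: over the inclusion $U_{\beta }\hookrightarrow U_{\alpha }$ the map $p_{\alpha \beta }$ specialises to the identity in the $X$-direction, and the intrinsic complex $\mathcal{E}_{X}^{0,\bullet }$ is ambient-independent. This verification — that well-definedness of the assembly reduces to the invariance of $\mathcal{E}_{X}^{0,\bullet }$ under change of embedding — is the only point requiring real care.

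Finally, applying $b_{\sharp }$ to the assembled morphism and invoking Example \ref{Ex_Cech_complex} yields
\begin{equation*}
\Dolb{\mathcal{A}\,}{\OX{X}}=b_{\sharp }\Dolb{k\,}{\OX{X}|\mathfrak{U}}\longrightarrow b_{\sharp }(\mathcal{E}_{X}^{0,\bullet }|\mathfrak{U})=\mathcal{C}^{\bullet }(\mathcal{U},\mathcal{E}_{X}^{0,\bullet }),
\end{equation*}
as claimed. Surjectivity is then formal: in each bidegree both sides are products indexed by simplices of $\mathcal{N}(\mathcal{U})$ of direct images along open inclusions $b_{\alpha }:U_{\alpha }\hookrightarrow X$, and the morphism is the product of the componentwise surjections $\pi_{\alpha}$; since direct image along an open inclusion preserves stalks at interior points of $U_{\alpha }$ (and both factors vanish on stalks where $U_{\alpha }$ contributes nothing), stalkwise surjectivity at any $x\in X$ reduces to the surjectivity of the finitely many relevant $\pi_{\alpha}$ already established.
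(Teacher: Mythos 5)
Your construction of the morphism follows the paper's proof exactly: the chart\--level surjections $\pi_{\alpha}:\Dolb{k_{\alpha}\,}{\OX{X}|U_{\alpha}}\rightarrow \mathcal{E}_{X}^{0,\bullet}|U_{\alpha}$ coming from the inclusion $\mathcal{I}_{U_{\alpha}}\mathcal{E}_{D_{\alpha}}^{0,i}\subseteq \mathcal{M}_{U_{\alpha}}^{i}$, assembled into a morphism of complexes of $\mathfrak{U}$-modules and pushed down by $b_{\sharp}$, with Example \ref{Ex_Cech_complex} identifying $b_{\sharp}(\mathcal{E}_{X}^{0,\bullet}|\mathfrak{U})$ with the \v{C}ech complex. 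Your middle step (that $p_{\alpha\beta}^{\ast}$ carries $\mathcal{M}_{U_{\alpha}}^{i}$ into $\mathcal{M}_{U_{\beta}}^{i}$ and induces plain restriction on the intrinsic quotients $\mathcal{E}_{X}^{0,\bullet}$) is precisely what the paper leaves implicit when it says the chart\--level morphisms ``give a morphism'' of $\mathfrak{U}$-modules, so flagging it is appropriate.

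The one genuine gap is in your surjectivity argument. You reduce surjectivity of $\prod_{|\alpha|=p}b_{\alpha\ast}\pi_{\alpha}$ to stalkwise surjectivity of the $\pi_{\alpha}$ on the grounds that $b_{\alpha\ast}$ preserves stalks at interior points and that ``both factors vanish'' elsewhere. This fails at points $x\in\overline{U_{\alpha}}\setminus U_{\alpha}$: there $(b_{\alpha\ast}\mathcal{F})_{x}=\varinjlim_{V\ni x}\mathcal{F}(V\cap U_{\alpha})$ is in general nonzero, and since $b_{\alpha\ast}$ is only left exact, surjectivity of $\pi_{\alpha}$ as a sheaf map does not give surjectivity of $(b_{\alpha\ast}\pi_{\alpha})_{x}$ --- one must lift a section of $\mathcal{E}_{X}^{0,i}$ \emph{globally} over $V\cap U_{\alpha}$. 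The repair is to note that the kernel $\mathcal{M}_{U_{\alpha}}^{i}/\mathcal{I}_{U_{\alpha}}\mathcal{E}_{D_{\alpha}}^{0,i}$ is an $\mathcal{E}_{D_{\alpha}}^{0,0}$-module (for $g$ smooth one has $g\,\overline{\partial}(\bar{f}\omega)=\overline{\partial}(\bar{f}g\omega)-\bar{f}\,\overline{\partial}g\wedge\omega$, so the generators are stable), hence soft, hence acyclic on every open set; therefore $\pi_{\alpha}$ is surjective on sections over every open set, so $b_{\alpha\ast}\pi_{\alpha}$ is surjective on sections over every open subset of $X$, and a product (even infinite) of morphisms surjective on all section spaces is an epimorphism. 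This also disposes of the separate problem, which your argument does not address, that an infinite product of sheaf epimorphisms need not be an epimorphism when $cov(\mathcal{A})$ is not locally finite. With that substitution the proof is complete and coincides with the paper's.
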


\begin{proof}
Let $(\mathfrak{U,}k,\mathfrak{D})$ be the s.s.embedding triple associated
to $(X,\mathcal{A})$ and $b:\mathfrak{U}\rightarrow X$ the natural morphism
given by the inclusions. For each $\alpha \in \mathcal{N(U)}$ one has a
morphism 
\begin{equation}
\Dolb{k_{\alpha }\,}{\OX{U\alpha }} \rightarrow \mathcal{E}_{U\alpha }^{0,\bullet }
\end{equation}%
and these morphisms give a morphism%
\begin{equation}
\Dolb{k\,}{\mathcal{O}|\mathfrak{U}})\rightarrow \mathcal{E}%
_{X}^{0,\bullet }|\mathfrak{U}  \label{Morph_E_U}
\end{equation}%
The morphism (\ref{Morph_E_X}) is obtained by applying $b_{\sharp }$ to (\ref%
{Morph_E_U}).
\end{proof}

\end{document}